\newcolumntype{P}[1]{>{\centering\arraybackslash}p{#1}}
\def\@tocline#1#2#3#4#5#6#7{\relax
  \ifnum #1>\c@tocdepth % then omit
  \else
    \par \addpenalty\@secpenalty\addvspace{#2}%
    \begingroup \hyphenpenalty\@M
    \@ifempty{#4}{%
      \@tempdima\csname r@tocindent\number#1\endcsname\relax
    }{%
      \@tempdima#4\relax
    }%
    \parindent\z@ \leftskip#3\relax \advance\leftskip\@tempdima\relax
    \rightskip\@pnumwidth plus4em \parfillskip-\@pnumwidth
    #5\leavevmode\hskip-\@tempdima
      \ifcase #1
       \or\or \hskip 1em \or \hskip 2em \else \hskip 3em \fi%
      #6\nobreak\relax
    \dotfill\hbox to\@pnumwidth{\@tocpagenum{#7}}\par% <---- \dotfill -> \hfill
    \nobreak
    \endgroup
  \fi}
\newtheorem{thm}{Theorem}[section]
\newtheorem{theorem}[thm]{Theorem} \newtheorem{proposition}[thm]{Proposition} %added by EMP
\newtheorem{lemma}[thm]{Lemma}
\newtheorem{corollary}[thm]{Corollary}
\theoremstyle{definition}
\newtheorem{definition}[thm]{Definition}
\newtheorem{remark}[thm]{Remark}
\newtheorem{question}[thm]{Question}
\newtheorem{notation}[thm]{Notation}
\newtheorem{example}{Example}
\DeclareMathOperator{\Hom}{\mathrm{Hom}}
\newcommand{\dbN}{\mathbb{N}}
\newcommand{\dbZ}{\mathbb{Z}}
\newcommand{\calP}{{\mathcal P}}
\newcommand{\calQ}{{\mathcal Q}}
\newcommand{\hocolim}{\mathop{\rm hocolim}}
\newcommand{\hofib}{{\rm hofib}}
\newcommand{\Ab}[1]{\mathsf{Ab}(#1)}
\newcommand{\AbCo}[1]{\mathsf{AbCo}(#1)}
\newcommand{\mAb}[1]{\mathsf{mAb}(#1)}
\newcommand{\mAbCo}[1]{\mathsf{mAbCo}(#1)}
\newcommand{\isom}{\mathrm{Isom}}
\tikzstyle{blackNode}=[fill=black, draw=black, shape=circle]
\tikzset{snake it/.style={decorate, decoration=snake}}
\newcommand{\Z}{\mathbb{Z}}
\newcommand{\mbr}{\mathbb{R}}
\newcommand{\mbe}{\mathbb{E}}
\newcommand{\mbs}{\mathbb{S}}
\newcommand{\pslt}{\widetilde{\mathrm{PSL}}_2(\mbr)}
\newcommand{\hyp}{\mathbb{H}^2}
\newcommand{\hypp}{\mathbb{H}^3}
\newcommand{\nil}{\mathrm{Nil}}
\newcommand{\sol}{\mathrm{Sol}}
\newcommand{\hxr}{\mathbb{H}^2\times\mathbb{E}}
\newcounter{commentcounter}
\let\@wraptoccontribs\wraptoccontribs
\newcommand{\Ecom}[1]{E_{\mathsf{com}}(#1)}
\newcommand{\Bcom}[1]{B_{\mathsf{com}}(#1)}
\newcommand{\pt}{\mathrm{pt}}
\title[]{The classifying space for commutativity of geometric orientable 3-manifold groups}
\dedicatory{To Alejandro Adem on the occasion of his 60th birthday.}
\author{Omar Antol\'{\i}n-Camarena}
\address{Omar Antol\'{\i}n-Camarena, Universidad Nacional Auton\'onoma de M\'exico}
\email{omar@matem.unam.mx}
\author{Luis Eduardo Garc\'{\i}a-Hern\'andez}
\address{Luis Eduardo Garc\'{\i}a-Hern\'andez, Universidad Nacional Auton\'onoma de M\'exico}
\email{legh@ciencias.unam.mx}
\author{Luis Jorge S\'anchez Salda\~na}
\address{Luis Jorge S\'anchez Salda\~na, Universidad Nacional Aut\'onoma de M\'exico}
\email{luisjorge@ciencias.unam.mx}
\date{\today}
\subjclass{}
\begin{document}

\maketitle
\begin{abstract} For a topological group $G$ let $\Ecom G$ be the total space of the universal transitionally commutative principal $G$-bundle as defined by Adem--Cohen--Torres-Giese. So far this space has been most studied in the case of compact Lie groups; but in this paper we focus on the case of infinite discrete groups.

For a discrete group $G$, the space $\Ecom G$ is homotopy equivalent to the geometric realization of the order complex of the poset of cosets of abelian subgroups of $G$. We show that for fundamental groups of closed orientable geometric $3$-manifolds, this space is always homotopy equivalent to a wedge of circles. On our way to prove this result we also establish some structural results on the homotopy type of $\Ecom{G}$.
\end{abstract}

%\tableofcontents

\section{Introduction}

For a topological group $G$ one can topologize the set of homomorphisms $\Hom(\mathbb{Z}^n, G)$ as the subspace of $G^n$ consisting of $n$-tuples of pairwise commuting elements of $G$. These spaces of commuting tuples form a simplicial subspace of the usual simplicial space model for the classifying space $BG$. The geometric realization of this simplicial subspace is called the classifying space for commutativity of $G$ and denoted by $\Bcom{G}$. It was first introduced in \cite{ACTG} and further studied in \cite{AG}, where a notion of transitionally commutative principal $G$-bundle was introduced and shown to be classified by $\Bcom{G}$. The total space of the universal transitionally commutative principal $G$-bundle is denoted $\Ecom{G}$. When $G$ is abelian, $\Bcom{G}$ agrees with $BG$ and $\Ecom{G}$ agrees with $EG$. One can consider how far $\Ecom{G}$ is from being contractible as a sort of measure of non-abelianness of $G$, and indeed, it has been shown \cite{ACGV} that for compact, not necessarily connected, Lie groups $G$, one has that $G$ is abelian if and only if $\Ecom{G}$ is contractible, and in fact, if and only if $\pi_k(\Ecom{G}) = 0$ for $i \in \{1,2,4\}$ (note the absence of $3$ in that set!).

It is fair to say that most of the attention given to $\Bcom{G}$ and $\Ecom{G}$ so far has been in the case of Lie groups, and more particularly compact Lie groups, but the definitions are also interesting for discrete groups. In the case of a discrete group $G$, the theorem cited above can improved to say that $G$ is abelian if and only if $\Ecom{G}$ is simply-connected, as first shown in \cite{Okay2014} and with a different proof in \cite{ACGV}. For discrete groups one can give a simple combinatorial model of the homotopy type of $\Ecom{G}$ as a poset, namely, $\Ecom{G}$ is homotopy equivalent to the geometric realization of the order complex of the poset of cosets of abelian subgroups of $G$.

In this paper we use this poset description to compute the homotopy type of $\Ecom{G}$ for fundamental groups of orientable geometric $3$-manifolds and show that they are all wedges of countably many circles. Recall that Thurston
showed that there are eight $3$-dimensional maximal geometries up to
equivalence (\cite[Theorem 5.1]{Sc83}): $\mbs^3$, $\mbe^3$, $\hypp$,
$\mbs^2\times\mbe$, $\hxr$, $\pslt$, $\nil$, and $\sol$. The main result of this paper is the following:

\begin{theorem}\label{thm:main}
    Let $G$ be the fundamental group of an orientable geometric 3-manifold. Then $\Ecom{G}$ is homotopically equivalent to $\bigvee_{I} S^1$, where $I$ is a (possibly empty) countable index set. Moreover, we have the following:
    \begin{enumerate}
        \item $I$ is empty if and only if $G$ is abelian, in which case $\Ecom{G}$ is contractible.
        \item $I$ is infinite if and only if $G$ is infinite and nonabelian.
        \end{enumerate}
\end{theorem}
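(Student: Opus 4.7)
The plan is to combine a general structural analysis of $\Ecom{G}$ for discrete groups (promised by the abstract as being part of the paper) with a case-by-case argument over the eight Thurston geometries. The starting point is the model $\Ecom{G}\simeq |\Delta P|$, where $P$ is the poset of cosets of abelian subgroups of $G$. The poset $P$ is covered by the closed stars $\mathrm{st}(gM)$ of the cosets of maximal abelian subgroups $M\leq G$; each such star is contractible (it has $gM$ as a maximum), and the intersection of two stars $\mathrm{st}(gM_1)$, $\mathrm{st}(hM_2)$ is either empty or the order complex of cosets of the abelian subgroup $gM_1g^{-1}\cap hM_2h^{-1}$. So my first step is to prove a criterion of the form: if all intersections $M_1\cap M_2$ of distinct maximal abelian subgroups are again abelian (hence equal to a smaller abelian subgroup with a single coset type inside each of $M_1,M_2$), then by the Nerve Lemma the space $\Ecom{G}$ collapses onto a graph whose vertices are $G/M$ (over maximal abelian $M$) and whose edges are the nonempty coset-level intersections of neighboring stars. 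Such a graph is always homotopy equivalent to a wedge $\bigvee_I S^1$, and $I$ is empty iff the graph is a tree, equivalently iff $G$ has a unique maximal abelian subgroup containing every abelian subgroup, which happens exactly when $G$ itself is abelian.

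The second step is to verify the intersection hypothesis of that criterion for every orientable closed geometric $3$-manifold group. I would organize this by geometry:
\begin{enumerate}
    \item[(i)] \emph{$\mbs^3$:} $G$ is a finite subgroup of $SO(4)$ acting freely on $S^3$, hence cyclic or binary polyhedral/dihedral. In these groups the lattice of abelian subgroups is well understood (cyclic subgroups), and two distinct maximal cyclic subgroups intersect in the unique central element.
    \item[(ii)] \emph{$\mbe^3$:} $G$ is an orientable Bieberbach group; maximal abelian subgroups are all isomorphic to $\mathbb{Z}^3$, and when $G$ is nonabelian, any two such intersect in the translation lattice $\mathbb{Z}^3$ itself.
    \item[(iii)] \emph{$\hypp$:} $G$ is torsion-free and non-elementary, so all abelian subgroups are infinite cyclic and maximal cyclic subgroups are malnormal; distinct maximal abelian subgroups intersect trivially.
    \item[(iv)] \emph{$\mbs^2\times\mbe$:} The orientable examples give $G=\mathbb{Z}$ or $G=\mathbb{Z}/2\ast\mathbb{Z}/2$, both easy.
    \item[(v)] \emph{$\hxr$, $\pslt$, $\nil$:} $G$ is Seifert-fibered with infinite cyclic center $Z$; maximal abelian subgroups all contain $Z$, and two distinct ones intersect exactly in $Z$.
    \item[(vi)] \emph{$\sol$:} $G$ is a $\mathbb{Z}^2$-by-$\mathbb{Z}$ extension with hyperbolic monodromy; the maximal abelian subgroups are the (unique) fiber $\mathbb{Z}^2$ and the infinite cyclic lifts of powers of the monodromy, with pairwise intersections trivial or equal to $\mathbb{Z}^2\cap\langle t\rangle=\{1\}$ (modulo a finite-order gluing issue I would handle directly).
\end{enumerate}

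The third step is to count $|I|$. In the abelian cases listed above ($G$ trivial, cyclic, $\mathbb{Z}$, $\mathbb{Z}^3$), $G$ itself is the unique maximal abelian subgroup and $P$ has a maximum, so $\Ecom{G}$ is contractible, giving (1). In the nonabelian finite case (binary polyhedral/dihedral), the graph of Step 1 has finitely many vertices and edges, so $I$ is finite; but $G$ is not abelian, so the graph is not a tree and $I$ is nonempty. In every nonabelian infinite case, there are infinitely many cosets of each maximal abelian subgroup in $G$, so the graph built in Step 1 has infinitely many vertices and infinitely many independent cycles, giving $|I|=\aleph_0$ and establishing (2).

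The main obstacle will be Step 1, namely, justifying the clean Nerve-type reduction and identifying precisely what graph arises: the order complex of a coset poset has many simplices coming from chains of nested abelian subgroups, and one must show that these higher-dimensional simplices get collapsed by a deformation retraction onto a $1$-dimensional subcomplex. Once that is done, the geometry-by-geometry check in Step 2 is mostly bookkeeping about well-known subgroup structures, and the counting in Step 3 is immediate. A secondary subtlety, also in Step 1, is that a given abelian subgroup may lie in several maximal ones (e.g.\ the center of a Seifert-fibered group lies in every maximal abelian subgroup), so the argument must be arranged so that these common subgroups correspond to single edges (or even single vertices) in the graph rather than to higher-dimensional cells.
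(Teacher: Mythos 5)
There is a genuine gap, and it sits exactly where the paper has to work hardest. Your Step 1 criterion is vacuous as literally stated: intersections of abelian subgroups are \emph{always} abelian, so the hypothesis ``all intersections $M_1\cap M_2$ are again abelian'' excludes nothing, while the conclusion is false in general (e.g.\ $\Ecom{G_1\times G_2}\simeq \Ecom{G_1}\times\Ecom{G_2}$ is not a wedge of circles for two nonabelian finite factors). The usable version of your criterion is the one the paper actually proves: if every pairwise intersection of \emph{distinct} maximal abelian subgroups equals the center, then the coset poset $\mAbCo{G}$ has height $1$, its order complex is itself a graph, and no nerve argument is needed --- indeed your nerve of closed stars is \emph{not} a graph, since three or more maximal abelian subgroups sharing the center produce $2$-simplices in the nerve (already for $Q_8$). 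The paper then still has to count the circles carefully via an explicit spanning tree to get $\bigvee_{\dbN}S^1$ in the infinite case.

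The fatal problem is Step 2: the height-$1$ hypothesis genuinely \emph{fails} for several orientable geometric $3$-manifold groups, precisely those of the form $K\ast_{\dbZ^2}K$ with $K$ the Klein bottle group. These occur in $\mbe^3$ (the Hantzsche--Wendt group $\Gamma_6$, whose center is trivial and whose maximal abelian subgroups include the rank-$3$ translation group $T$ together with infinitely many maximal infinite cyclic subgroups, giving chains $1<\langle x^2\rangle<T$ of length $2$), in $\nil$ (the types the paper calls pg and pgg, where the fiber subgroup $\langle c\rangle$ is normal but \emph{not} central because the base orbifold is non-orientable, so your claim that distinct maximal abelians intersect exactly in an infinite cyclic center is false --- the center is trivial), and in $\sol$ (the amalgam case, which is exactly the ``finite-order gluing issue'' you defer). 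Your blanket Euclidean claim is also incoherent: in a Bieberbach group the translation lattice is the \emph{unique} maximal abelian subgroup of rank $3$, so there are no ``two distinct $\dbZ^3$'s intersecting in the lattice''; most maximal abelian subgroups are infinite cyclic. In all these height-$2$ cases the order complex of $\mAbCo{G}$ is $2$-dimensional, and the paper needs an additional idea absent from your proposal: a two-stage elementary collapse, first removing the triangles whose edge $\{g\}<gC$ (with $C$ a Type I maximal cyclic subgroup) is a free face, after which the remaining edges $\{g\}<g\langle w\rangle$ each lie in a unique triangle and can be collapsed in turn, deformation retracting the complex onto a graph. Without this (or some substitute), your argument does not establish the theorem for $\Gamma_6$, the Nil pg/pgg families, or the Sol amalgams. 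A minor further omission: your spherical list forgets the family $P'_{8\cdot 3^m}$ and the products with cyclic groups, though the latter are handled by the product formula since one factor is abelian.
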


From \Cref{thm:main} we conclude that $\Ecom{G}$ is a finite nontrivial wedge of circles exactly when $G$ is finite and nonabelian which, by Perelman's theorem, happens when $G$ is the fundamental group of a spherical manifold. In that case we explicitly compute the number of circles in the wedge decomposition, see Section 5. Some of these calculations were done using GAP.

As a consequence of \Cref{thm:main}, \Cref{thm:direct:free:product} and the Kneser--Milnor prime decomposition theorem (see \Cref{prime decomposition}), we obtain the following theorem:

\begin{theorem}\label{thm:main:prime:decomposition}
    Let $M$ be a 3-manifold with fundamental group $G$. Assume that $P_1 \# \cdots \# P_n$ is the prime decomposition of $M$, with $n\geq 2$, and each $P_i$ is a geometric 3-manifold. Then   $\Ecom{G}$ is homotopically equivalent to $\bigvee_{\dbN} S^1$.
\end{theorem}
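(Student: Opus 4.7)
The plan is to combine three ingredients: \Cref{thm:main} for each geometric summand, \Cref{thm:direct:free:product} for assembling free products, and the Kneser--Milnor prime decomposition theorem (\Cref{prime decomposition}). First, I would invoke Kneser--Milnor to write $G \cong G_1 * \cdots * G_n$ with $G_i = \pi_1(P_i)$, and then apply \Cref{thm:main} to each $G_i$ to obtain $\Ecom{G_i} \simeq \bigvee_{I_i} S^1$ for some (possibly empty) countable index set $I_i$.

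Next, I would iteratively apply the free-product portion of \Cref{thm:direct:free:product} to reduce $\Ecom{G_1 * \cdots * G_n}$ to a construction built out of the individual $\Ecom{G_i}$. The free-product formula there should manifestly preserve the property of being a wedge of circles (since a wedge of wedges of circles is itself a wedge of circles), so after $n-1$ iterations we conclude that $\Ecom{G} \simeq \bigvee_{I} S^1$ for some countable set $I$.

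Finally, I need to verify that $I$ is countably infinite. Because $n \geq 2$ and, by the standard convention of the prime decomposition, no $P_i$ is a homotopy $3$-sphere, each $G_i$ is nontrivial; hence $G$ contains the nontrivial free product $G_1 * G_2$, which is infinite and nonabelian (for instance it contains the infinite cyclic subgroup generated by $g_1 g_2$ with $g_i \in G_i \setminus \{1\}$). The \textbf{main obstacle} lies precisely here: unlike \Cref{thm:main}(2), which supplies infiniteness only for fundamental groups of \emph{geometric} 3-manifolds, $G$ itself is not geometric, so infiniteness cannot be inherited from that theorem. Instead, I would extract it directly from the explicit wedge decomposition provided by \Cref{thm:direct:free:product}, exploiting the fact that a nontrivial free product already possesses infinitely many cosets of abelian subgroups (coming, for example, from arbitrarily long reduced words alternating between the two factors).
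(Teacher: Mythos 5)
Your proposal is correct and follows exactly the route the paper intends: the paper offers no separate proof, stating only that the theorem is a consequence of \Cref{thm:main}, \Cref{thm:direct:free:product} and Kneser--Milnor, which is precisely your combination of van Kampen on the prime decomposition, \Cref{thm:main} for each summand, and iteration of the free-product formula. Your worry about infiniteness of the index set is in fact a non-issue, since the formula $\Ecom{G_1 \ast G_2} \simeq \bigvee_{\mathbb{N}} \Ecom{G_1} \vee \bigvee_{\mathbb{N}} \Ecom{G_2} \vee \bigvee_{\mathbb{N}} S^1$ of \Cref{thm:direct:free:product} already exhibits an explicit $\bigvee_{\mathbb{N}} S^1$ term once the $G_i$ are nontrivial, which you correctly verify.
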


The proof of \Cref{thm:main} is done case by case, that is, analyzing the possible fundamental groups that appear for each of the eight 3-dimensional geometries. In the following table we summarize the references for each geometry:

\begin{table}[H]
\centering
\begin{tabular}{|P{5.5cm}|P{7cm}|}
 \hline
 \textbf{Type of geometry}  &\textbf{Worked out in} \\
 \hline
   $\mbs^3$    & \Cref{thm:Ecom:Q4n},\Cref{thm:Ecom:p'},\Cref{thm:Ecom:P48:P120} and \Cref{thm:Ecom:FiniteDihedral}  \\
  \hline 
 $\hypp$  & \Cref{thm:Ecom:hyperbolic}  \\
 \hline
  $\mbs^2\times\mbe$  & \Cref{sec:S2xE}  \\
 \hline
 $\hxr$ or $\pslt$  & \Cref{thm:Ecom:H2xE}  \\
 \hline
  $\mbe^3$   & \Cref{thm:Ecom:G1-G5} and \Cref{thm:Ecom:G6}  \\
  \hline
    $\nil$ &  \Cref{thm:nil:semidirect}, \Cref{thm:nil:orientable:orbifold}, \Cref{thm:Ecom:nil:pg} and \Cref{thm:case:pgg}  \\
  \hline
   $\sol$ &  \Cref{prop:sol:semidirect} and \Cref{thm:Ecom:Sol:amalgam}  \\
 \hline
\end{tabular}
\caption{   $\Ecom{G}$ of the fundamental group $G$ of an orientable geometric 3-manifold}
\label{table:summary:Ecom}
\end{table}

Let us say something about the strategy of the proof of the theorems  in \Cref{table:summary:Ecom}. The space $\Ecom{G}$ is homotopically equivalent to the geometric realization of the poset $\mAbCo{G}$ that consists of cosets of maximal abelian subgroups of $G$ and their intersections, ordered by inclusion. For most of the fundamental groups we dealt with, it turned out that $\mAbCo{G}$ has height 1, that is, its geometric realization has dimension one, thus it is straightforward that has the homotopy type of a wedge of circles. The exceptional cases where given by amalgamated products of the form $K\ast_{\dbZ^2}K$, where $K$ is the fundamental group of the Klein bottle. Such groups appear in geometries $\mbe^3$, $\nil$, and $\sol$. In these cases $\mAbCo{G}$ has height 2, and therefore we have triangles in its geometric realization. Nevertheless, it is always the case that some triangles have free faces, that is they have a face that does not belong to any other triangle, and they can be deformation retracted to the other two faces. Finally, after collapsing these triangles it happens that all the remaining triangles have a free face and they also can be collapsed. In conclusion, via a two-staged collapsing procedure we see that the geometric realization of  $\mAbCo{G}$ can be deformation retracted to its 1-skeleton, and this concludes the proof.

\subsection*{Some byproduct results}
On our way to prove \Cref{thm:main}, we establish some structural results on the homotopy type of $\Ecom{G}$ and we also computed the homotopy type of $\Ecom{G}$ of some other groups. Next, we state these results for the sake of the reader. The first theorem tell us that the homotopy type of $\Ecom{G}$ behaves well with respect to free and direct products.

\begin{theorem}[\Cref{thm:direct:free:product}]\label{thm:direct:free:product:intro} Let $G_1$ and $G_2$ be nontrivial groups. Then
\begin{enumerate}
    \item $\Ecom{G_1 \ast G_2} \simeq \bigvee_{\mathbb{N}} \Ecom{G_1} \vee \bigvee_{\mathbb{N}} \Ecom{G_2} \vee \bigvee_{\mathbb{N}} S^1$, and
    \item $\Ecom{G_1 \times G_2} \simeq \Ecom{G_1} \times \Ecom{G_2}$.
\end{enumerate}
\end{theorem}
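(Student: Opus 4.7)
The plan is to exploit the combinatorial description $\Ecom{G}\simeq|\mAbCo{G}|$ throughout, translating both statements into topological questions about order complexes of posets.

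For part (2), the argument is essentially formal. I would start by establishing that a subgroup $A\leq G_1\times G_2$ is maximal abelian if and only if $A=A_1\times A_2$ with each $A_i$ maximal abelian in $G_i$: if $A$ is abelian then so is $\pi_1(A)\times\pi_2(A)\supseteq A$, so maximality of $A$ forces equality, and a similar argument forces each $\pi_i(A)$ to be maximal. Because cosets and nonempty intersections of product subgroups split as products, one obtains an isomorphism of posets $\mAbCo{G_1\times G_2}\cong\mAbCo{G_1}\times\mAbCo{G_2}$, and the standard homeomorphism between the order complex of a product of posets and the product of the order complexes finishes the argument.

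For part (1) I would use the Bass--Serre tree $T$ of the free product decomposition. Since the edge stabilizers of $T$ are trivial and the vertex stabilizers are conjugates of $G_1$ or $G_2$, every nontrivial element of $G_1\ast G_2$ is either elliptic (conjugate into one of the factors) or hyperbolic (with infinite cyclic centralizer). Consequently every maximal abelian subgroup of $G_1\ast G_2$ is of one of three types: a conjugate of a maximal abelian of $G_1$, a conjugate of a maximal abelian of $G_2$, or an infinite cyclic subgroup generated by a hyperbolic element. Moreover, any two distinct maximal abelians have trivial intersection unless both lie in a common vertex stabilizer, because a common nontrivial element would fix an edge of $T$.

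This classification decomposes $|\mAbCo{G_1\ast G_2}|$ into building blocks glued along singletons. For each coset $gG_i\in G/G_i$ the sub-poset of those cosets (and intersections) whose underlying subsets of $G$ lie in $gG_i$ is isomorphic to $\mAbCo{G_i}$, and thus contributes a copy of $|\mAbCo{G_i}|\simeq\Ecom{G_i}$. For each coset $gH$ of a maximal cyclic hyperbolic subgroup $H$, the corresponding local sub-poset is the cone on a discrete set of singletons, hence contractible. Two distinct blocks meet in at most a single $0$-cell, because pairwise intersections of the underlying subsets of $G$ reduce to at most one element by the triviality of edge stabilizers.

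The final step is the homotopy computation. The blocks cover $|\mAbCo{G_1\ast G_2}|$, and their incidence with singletons forms a connected bipartite graph $\Gamma$ (connectedness follows from the alternating-word normal form in a free product, since consecutive partial products differ by a factor in $G_1\cup G_2$ and so share a block). Contracting a spanning tree of $\Gamma$ collapses every contractible star to the basepoint, wedges the $\Ecom{G_i}$-blocks at a single point, and converts each edge outside the spanning tree into an $S^1$. Since $G_1\ast G_2$ is countably infinite whenever both factors are nontrivial, there are countably many cosets of each $G_i$, countably many hyperbolic stars, and countably many surplus edges, and summing up gives the desired $\bigvee_\mbn\Ecom{G_1}\vee\bigvee_\mbn\Ecom{G_2}\vee\bigvee_\mbn S^1$. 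The main obstacle lies in this last step: performing the spanning-tree contraction rigorously in the presence of infinitely many pieces and confirming that no additional loops appear inside individual blocks beyond those already carried by $\Ecom{G_i}$.
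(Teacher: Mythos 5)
Your part (2) matches the paper's proof in all essentials: maximal abelian subgroups of a product are products of maximal abelian subgroups, the coset poset splits as a product of posets, and realization respects products. Your part (1) also follows the paper's strategy -- classify the maximal abelian subgroups (the paper cites Kurosh's theorem where you use the Bass--Serre tree, an equivalent and equally clean route), decompose $\mAbCo{G_1\ast G_2}$ into blocks meeting pairwise in at most one vertex, and collapse a spanning tree; your incidence-graph collapse is the same move as the paper's explicit contractible subcomplex $Y$, and since the blocks are glued along single points it is standard, so the ``main obstacle'' you flag at the end is not a real one.

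However, one step as you wrote it is wrong and would leave most of the complex uncovered: you index the $\Ecom{G_i}$-blocks by cosets $gG_i \in G/G_i$ and define a block as those poset elements whose underlying subsets of $G$ lie in $gG_i$. A coset $h A^{y}$ of a maximal abelian subgroup $A^{y} \le G_i^{y}$ of a nontrivial conjugate is contained in a coset of $G_i^{y}$, not in any coset of $G_i$ (two of its elements differ by an element of $G_i^{y}$), so under your indexing every coset coming from a nontrivial conjugate belongs to no block, and these are not hyperbolic stars either. The fix is exactly the paper's bookkeeping: index the blocks by cosets $gG_i^{y}$ of \emph{conjugates} of the factors, as in the paper's decomposition $\mAbCo{G}=\bigcup_{y}\bigsqcup_{g}g\cdot\mAbCo{G_i^{y}}\cup \mathsf{Co}P$; your malnormality observation (trivial edge stabilizers give $G_i^{y}\cap G_j^{x}=1$ for distinct conjugates) then correctly yields that blocks share at most one singleton, and the assembly goes through. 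Two smaller slips: the block sub-poset is $\mAbCo{G_i^{y}}$ \emph{augmented by the singletons it contains}, which is not isomorphic to $\mAbCo{G_i}$ when $Z(G_i)\neq 1$, though it is still homotopy equivalent to $\Ecom{G_i}$ by the cofinality argument of \Cref{Ecom_mAbCo}; and $G_1\ast G_2$ is countably infinite only when both factors are countable, so your count of wedge summands (and indeed the $\bigvee_{\mathbb{N}}$ in the statement itself) tacitly assumes countable factors -- an assumption the paper's proof shares implicitly and which holds for all groups treated there.
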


Fundamental groups of closed hyperbolic 3-manifolds are very particular cases of hypebolic groups (in the sense of Gromov). We proved the following.

\begin{theorem}[\Cref{thm:torsionfree:hyperbolic}]\label{thm:torsionfree:hyperbolic:intro}
Let $G$ be a torsion-free hyperbolic group in the sense of Gromov. Assume $G$ is not virtually cyclic. Then
\[\Ecom{G} \simeq \bigvee_{\mathbb{N}} S^1.\]
\end{theorem}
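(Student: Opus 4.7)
The plan is to leverage the combinatorial model $\Ecom{G} \simeq |\mAbCo{G}|$ from the introduction together with the rigid subgroup structure of torsion-free hyperbolic groups. In such a group the centralizer of every non-trivial element is infinite cyclic; consequently every abelian subgroup is cyclic, the maximal abelian subgroups are exactly the centralizers $Z_G(h)$ for $h \neq 1$, and two distinct maximal abelian subgroups intersect trivially (if they shared a non-trivial $h$, each would lie in the infinite cyclic $Z_G(h)$ and, being maximal, would coincide with it). Since $G$ is not virtually cyclic it is non-abelian, so at least two maximal abelian subgroups exist. For distinct maximal abelian $M, N$ and any cosets $gM, hN$, the intersection is therefore either empty or a singleton. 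Hence $\mAbCo{G}$ is a height-one poset whose elements are the singletons $\{g\}$, $g \in G$, and the cosets $gM$ of maximal abelian subgroups, with $\{g\} \le hM$ iff $g \in hM$.

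It follows that $\Ecom{G}$ is homotopy equivalent to the resulting bipartite graph. This graph is countable, because $G$ is finitely generated, and it is connected: for $g \neq 1$ let $M_g$ denote the unique maximal abelian subgroup containing $g$; then the path $\{g\} - M_g - \{1\}$ connects any singleton to $\{1\}$, and every coset vertex $hM$ is adjacent to $\{h\}$. Any connected graph is homotopy equivalent to a wedge of $\rank H_1$ circles, and for a countable graph this index set is countable. The rank is at least $1$ because $G$ is non-abelian, so $\Ecom{G}$ is not simply-connected by the criterion recalled in the introduction. It remains only to show the rank is infinite.

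To see this, I would use that a non-elementary hyperbolic group contains a free subgroup $\langle a, b \rangle$ of rank $2$. For each $n \ge 1$ consider the walk
\[
\gamma_n \colon \{1\} - M_a - \{a^n\} - a^n M_b - \{a^n b\} - M_{a^n b} - \{1\},
\]
where $M_w$ denotes the unique maximal abelian subgroup of $G$ containing a non-trivial element $w$. Using freeness of $\langle a, b \rangle$ one checks directly that all six vertices are pairwise distinct (for instance, $M_{a^n b} = M_a$ would force $b \in \langle a \rangle$) and that all six incidences hold, so $\gamma_n$ is a genuine $6$-cycle. The key observation is that the edge $\{a^n\} - M_a$ belongs to $\gamma_m$ only when $m = n$, because the only singleton vertices $\gamma_m$ touches at $M_a$ are $\{1\}$ and $\{a^m\}$. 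Thus $[\gamma_1], [\gamma_2], \ldots$ are $\mathbb{Z}$-linearly independent in $H_1(\Ecom{G}; \mathbb{Z})$, the rank is infinite, and $\Ecom{G} \simeq \bigvee_{\mathbb{N}} S^1$. The main obstacle is precisely this last step: once the height-one reduction is established, the only real content is exhibiting an explicit infinite family of homologically independent loops, and the $6$-cycles $\gamma_n$ coming from a free subgroup, together with the triviality of pairwise intersections of maximal abelian subgroups, provide exactly what is needed.
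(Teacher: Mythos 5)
Your proof is correct, and its first half coincides in substance with the paper's: the paper also reduces to the observation that in a torsion-free hyperbolic group every abelian subgroup is trivial or infinite cyclic and distinct maximal abelian subgroups intersect trivially (citing Bridson--Haefliger and a remark of Juh\'asz, where you instead derive both facts directly from centralizers of nontrivial elements being infinite cyclic --- a slightly cleaner packaging of the same input). Where you genuinely diverge is the final step. The paper feeds the height-one structure into a general lemma (\Cref{thm:countable:wedge}): for \emph{any} countably infinite nonabelian group whose maximal abelian subgroups pairwise intersect trivially, it builds an explicit connected spanning subcomplex, takes a maximal tree, and shows infinitely many edges lie outside it using only that some maximal abelian subgroup has infinite index or is infinite and proper. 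You instead invoke the Tits alternative to extract a rank-two free subgroup $\langle a,b\rangle$ and exhibit explicit $6$-cycles $\gamma_n$ through $\{1\}$, $M_a$, $\{a^n\}$, $a^nM_b$, $\{a^nb\}$, $M_{a^nb}$, each carrying the private edge $\{a^n\}\,\text{--}\,M_a$, which indeed forces $\mathbb{Z}$-linear independence in $H_1$. Your argument is more hands-on and self-contained for this theorem, but it is less general: the paper's lemma requires no free subgroup and is reused verbatim for cusped hyperbolic $3$-manifolds, fuchsian quotients, and the $\sol$ semidirect products, whereas your cycle construction is tied to non-elementarity of hyperbolic groups. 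One small imprecision: $M_{a^nb}=M_a$ forces $b\in M_a$, not necessarily $b\in\langle a\rangle$ (the maximal cyclic subgroup $M_a$ may be generated by a proper root of $a$); the contradiction is that $M_a$ is abelian and contains both $a$ and $b$, violating freeness of $\langle a,b\rangle$. This does not affect the validity of the argument.
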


In the following theorem there is a computation of  the homotopy type of $\Ecom{G}$ for five of the nonabelian wallpaper groups, i.e. 2-dimensional crystallographic groups.

\begin{theorem}[\Cref{lemma:mabco:klein:bottle} and \Cref{prop:2dim:crystallographic}]\label{thm:wallpaper}
    Let $G$ be one of the following wallpaper groups: 
    \begin{enumerate}
    \item the fundamental group of the Klein bottle $\dbZ\rtimes_{-1}\dbZ$,
    \item $\dbZ\rtimes_{A_n} \dbZ/n$, where $A_n$ is an integral matrix of order 2, 3, 4 or 5 (see \Cref{eq:matrices:2crystallographic} for a concrete description).
    \end{enumerate}
Then $\Ecom{G}\simeq \bigvee_{\mathbb{N}} S^1$.
\end{theorem}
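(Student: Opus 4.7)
The plan is to use the poset model $\Ecom{G} \simeq |\mAbCo{G}|$ recalled in the introduction. In each case I would classify the maximal abelian subgroups of $G$ and their pairwise intersections, show that $\mAbCo{G}$ has height $1$ so that its realization is a graph, and then verify that this countable graph is connected and carries infinitely many linearly independent cycles; the conclusion $|\mAbCo{G}| \simeq \bigvee_{\dbN} S^1$ then follows from the standard fact that a connected countable graph with infinite first Betti number has this homotopy type.

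For the Klein bottle $K = \langle a,b \mid aba^{-1}=b^{-1}\rangle$, a direct computation with the normal form $b^ia^j$ identifies the maximal abelian subgroups as the translation kernel $A_\infty := \langle b, a^2\rangle \cong \dbZ^2$ and the infinite cyclic subgroups $A_i := \langle b^i a\rangle$ for $i \in \dbZ$. Since $(b^i a)^2 = a^2$, every $A_i$ contains $\langle a^2\rangle$, and one checks that all pairwise intersections of distinct maximal abelians equal $\langle a^2\rangle$; hence $\mAbCo{K}$ has height $1$, with bottom level the cosets of $\langle a^2\rangle$. Connectivity is immediate since every such bottom coset is dominated by $gA_\infty$ and by each $gA_i$. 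For infinitely many independent cycles I would exhibit, for each integer $k \geq 1$, the chain of comparabilities
\[
\langle a^2\rangle \subset A_0 \supset a\langle a^2\rangle \subset aA_\infty \supset b^{-k}a\langle a^2\rangle \subset A_{-k} \supset \langle a^2\rangle,
\]
which traces out a $6$-cycle in $|\mAbCo{K}|$; these cycles are linearly independent in $H_1$ because the edge $aA_\infty \supset b^{-k}a\langle a^2\rangle$ occurs only in the cycle with parameter $k$.

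For the crystallographic groups $G_n := \dbZ^2 \rtimes_{A_n} \dbZ/n$ with $n \in \{2,3,4,6\}$, the centralizer of an element $(w, \sigma^k)$ in $G_n$ is controlled by the linear equation $(1-\sigma^k)v = (1-\sigma^a)w$ on $\dbZ^2$; together with the crystallographic action having no nonzero fixed vector, this forces every element with nontrivial rotation part to have finite cyclic centralizer. Therefore the maximal abelian subgroups of $G_n$ are the translation subgroup $T \cong \dbZ^2$ together with the cyclic subgroups $\langle(w,\sigma)\rangle$ for $w \in \dbZ^2$ and, when $n=4$ or $n=6$, additional cyclic subgroups $\langle(w,\sigma^d)\rangle$ for $d$ a proper divisor of $n$ with $d>1$ and $w \in \dbZ^2$ \emph{not} in the image of the norm operator $\nu_d := 1 + \sigma + \cdots + \sigma^{d-1}$ acting on $\dbZ^2$ (the condition ensures that $(w,\sigma^d)$ is not a $d$-th power, hence not contained in any $\langle(v,\sigma)\rangle$). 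All pairwise intersections of distinct maximal abelians are trivial, so $\mAbCo{G_n}$ has height $1$ and its realization is the bipartite graph with top vertices the maximal-abelian cosets and bottom vertices the elements of $G_n$. Connectivity is clear (the identity lies below every maximal-abelian subgroup containing it), and infinitely many independent $6$-cycles come from the family
\[
e \,\text{--}\, T \,\text{--}\, (v,1) \,\text{--}\, (v,1)C_0 \,\text{--}\, (v,\sigma) \,\text{--}\, C_v \,\text{--}\, e, \qquad v \in \dbZ^2 \setminus \{0\},
\]
where $C_u := \langle(u,\sigma)\rangle$; here the edge $(v,1) \,\text{--}\, T$ appears only in the cycle for this particular $v$, giving the required independence.

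The main obstacle is the complete classification of maximal abelian subgroups when $n=4$ and $n=6$, where elements of intermediate order (order $2$ for $n=4$; orders $2$ and $3$ for $n=6$) do or do not generate additional maximal abelians according to whether their translation vector lies in the image of the appropriate norm operator $\nu_d$. Once this casework is settled the remaining verifications---triviality of intersections, connectivity, and linear independence of the exhibited cycles---are routine.
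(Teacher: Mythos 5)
Your proposal is correct, and it shares the paper's overall skeleton (the poset model $\Ecom{G}\simeq|\mAbCo{G}|$, classification of maximal abelian subgroups, height $1$, then a connected countable graph of infinite first Betti number), but both key steps are executed by a genuinely different route. For the circle count, the paper proves a reusable lemma (\Cref{thm:countable:wedge}, packaged with \Cref{thm:central:extension} into \Cref{cor:height:1}): it constructs an explicit spanning tree of the graph and shows infinitely many edges survive outside it. You instead exhibit, in each case, an explicit family of $6$-cycles each owning a private edge; I checked your cycles and they are valid (in the Klein bottle case one needs $k\geq 1$ so that $b^{-k}a\langle a^2\rangle\neq a\langle a^2\rangle$, which you impose), so this works, but it must be redone per group, whereas the paper quotes the same corollary here and for most of the other geometries. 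More substantively, in the crystallographic case the paper sidesteps what you flag as the main obstacle: it views $\dbZ^2\rtimes_{A_n}\dbZ/n$ as a plane crystallographic group in which every element is a translation or a rotation, notes that two nontrivial rotations commute iff they share a center and that a nontrivial translation never commutes with a nontrivial rotation, and concludes that the maximal abelian subgroups are exactly the translation lattice and the full finite cyclic stabilizers of rotation centers, with all pairwise intersections trivial. The point is that triviality of intersections does not require deciding precisely which elements $(w,\sigma^d)$ generate maximal subgroups, so your norm-operator casework for $n=4,6$ (your $\nu_d$-image criterion is indeed the correct algebraic form of ``the rotation center is only a $d$-fold point'') is sound but unnecessary. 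Similarly, for the Klein bottle the paper avoids normal-form centralizer computations by quotienting by the center $Z=\langle a^2\rangle$ and combining \Cref{thm:central:extension} with the elementary analysis of the infinite dihedral group (\Cref{lemma:mabco:infinite:dihedral}); your direct computation reproduces the same intersection pattern (every pairwise intersection of distinct maximal abelians equals $Z$), after which the paper again just cites \Cref{cor:height:1}. One harmless nit: the exponents in your centralizer equation $(1-\sigma^k)v=(1-\sigma^a)w$ are garbled, though you only use its correct consequence that elements with nontrivial rotation part have finite cyclic centralizer.
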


\subsection*{Open questions and further research}

Note that the center $Z$ of a group $G$ is exactly the intersection of all maximal abelian subgroups of $G$. Provided that $Z$ has finite index in $G$ we conclude that $\mAbCo{G}$ is a finite poset, and as a conclusion $\Ecom{G}$ has the homotopy type of a finte $CW$-complex. This conclusion does not apply to groups that contain an abelian subgroup of finite index like the fundamental group of the Klein bottle, see  \Cref{lemma:mabco:klein:bottle}. This leads to the following question.

\begin{question}
    Let $G$ be a group. Assume that $\Ecom{G}$ has the homotopy type of a finite CW-complex. Is it true that $G$ contains its center as a finite index subgroup?
\end{question}

It is well-known that there are 17 wallpaper groups. The only abelian of these groups is $\dbZ^2$ for which $E_{\mathsf{com}}$ is contractible. On the other hand, for the groups listed in \Cref{thm:wallpaper}, we proved that $\Ecom{G}$ has the homotopy type of an infinite countable wedge of circles. It is natural to ask for the analogous computation for the remaining wallpaper groups.

\begin{question}
    Let $G$ be a wallpaper group. What is the homotopy type of $\Ecom{G}$? Is it true that $\Ecom{G}\simeq \bigvee_{\mathbb{N}} S^1$?
\end{question}

Let a closed, prime, oriented 3-manifold $M$ that is not homeomorphic to $S\times D^2$, $T^2\times I$ nor the nontrivial $I$-bundle over the Klein bottle. Assume $G$ is not geometric. Then there exists a nonempty collection
$T\subseteq M$ of disjoint incompressible tori (i.e. two sided properly embedded and $\pi_1$-injective), such that each component of
$M - T$ is geometric, see for instance \cite[Theorem 1.9.1]{AFW15}. A deep analysis of this decomposition and the results in the present article, will lead to a classification of the (maximal) abelian subgroups of the fundamental group $G$ of $M$. A natural question that is left by \Cref{thm:main} is the following.

\begin{question}
    Let $G$ be the fundamental group of a prime non-geometric 3-manifold. What is the homotopy type of $\Ecom{G}$?
\end{question}

\subsection*{Outline of the paper}
In Section 2 we establish that for any discrete group $G$ the space $\Ecom{G}$ is homotopically equivalent to the geometric realization of $\mAbCo{G}$, which is the corner stone of all of our computations. Section 3 is devoted to recall what is needed, for our purpose, about the theory of 3-manifolds. In Section 4 we prove some structural results on the homotopy type of $\Ecom{E}$ that will be useful in the proof of \Cref{thm:main}; among other things we prove in this section \Cref{thm:direct:free:product:intro}. In Sections 5 to 8 we compute the homotopy type of $\Ecom{G}$ for geometries $\mbs^3$, $\hypp$, $\mbs^2\times\mbe$, and $\hxr$ respectively. The main goals of Sections 9 and 10 are to set up notation and preliminary results to deal with groups of the form $K\ast_{\dbZ^2}K$, with $K$ the fundamental group of the Klein bottle; is in this section that we compute also the homotopy type of $\Ecom{G}$ for some wallpaper groups. Finally in Sections 11 to 13 we compute the homotopy type of  $\Ecom{G}$ for geometries $\mbe^3$, $\nil$ and $\sol$.

\vskip 10pt

\emph{Acknowledgments} O.A-C. and L.E.G.-H. gratefully acknowledge support from CONACyT Ciencia de Frontera 2019 grant CF217392. L.J.S.S is grateful for the financial support of DGAPA-UNAM grant PAPIIT IA106923.

\section{Posets of cosets of abelian subgroups}

In this section we establish that for every discrete group $G$, the space $\Ecom{G}$ is homotopically equivalent to the geometric realization of the poset $\mAbCo{G}$ of all cosets of maximal abelian subgroups of $G$ and their intersections, see \Cref{Ecom_mAbCo}. This result is the corner stone of all of our computations.

\begin{definition}[Height of a poset]
Let $P$ be a poset. We say a chain $x_0< x_1 < \cdots < x_n$ in $P$ has length $n$, and the \emph{height of $P$} is the greatest length of a chain in $P$. Note that the height of $P$ coincides with the dimension of the geometric realization (of the nerve)  of $P$.  
\end{definition}

\begin{definition}[Posets of abelian subgroups]
Let $G$ be a group. Define the following posets
\begin{itemize}
\item $\Ab{G}$ as the poset of all abelian subgroups of $G$.
\item $\AbCo{G}$ as the poset of all cosets of abelian subgroups of $G$.
\item $\mAbCo{G}$ as the poset of all cosets $gB$ such that $g\in G$ and $B$ is an intersection of maximal abelian subgroups of $G$. 
\end{itemize}
In all of these posets the order relation is the one given by inclusion.
\end{definition}

\begin{remark}\label{remark:maximal:contains:center}
Let $G$ be a group and $Z$ its center.  Notice that all maximal abelian subgroups of $G$ contain $Z$. In fact, one can show that $Z$ is equal to the intersection of all maximal abelian subgroups of $G$.
\end{remark}

All of our computations on the homotopy type of $\Ecom{G}$ rely on the following result, which will be used from now on without further mention. Therefore all throughout the article we will only deal with (the geometric realization of)  $\mAbCo{G}$ and $\AbCo{G}$.

\begin{proposition}\label{Ecom_mAbCo}
Given a discrete group $G$, the following CW-complexes are homotopy equivalent.
\begin{enumerate}
    \item $\Ecom{G}$,
    \item the geometric realization of $\AbCo{G}$.
    \item the geometric realization of $\mAbCo{G}$, 
\end{enumerate}
\end{proposition}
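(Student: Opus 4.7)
The plan is to decouple the three-fold equivalence into two independent claims: $\Ecom{G}\simeq |\AbCo{G}|$, which is essentially known from the literature on commuting simplicial sets for discrete groups, and $|\AbCo{G}|\simeq |\mAbCo{G}|$, which carries the new combinatorial content. The second piece is what I would develop carefully; for the first I would cite \cite{Okay2014} and \cite{ACGV} and indicate the shape of the argument.

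For $\Ecom{G}\simeq |\AbCo{G}|$, I would describe $\Ecom{G}$ for discrete $G$ as the geometric realization of the simplicial set whose $n$-simplices are tuples $(g_0,\dots,g_n)\in G^{n+1}$ with all entries pairwise commuting, equipped with the standard face and degeneracy operators inherited from $EG$. This simplicial set is covered by the sub-simplicial sets $X_A=\{(g_0,\dots,g_n):\text{ all }g_i\text{ lie in a single coset of }A\}$, indexed by abelian subgroups $A\leq G$. Each $X_A$ decomposes as a disjoint union over $G/A$ of contractible pieces, so $X_A\simeq G/A$, and the covers behave well under intersection because $X_A\cap X_B=X_{A\cap B}$. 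A nerve-of-cover argument, or equivalently the identification $\Ecom{G}\simeq\hocolim_{A\in\Ab(G)} G/A$, rewrites the result as the order complex of the poset of cosets of abelian subgroups, which is exactly $|\AbCo{G}|$.

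For the equivalence $|\AbCo{G}|\simeq |\mAbCo{G}|$ I plan to exhibit an explicit poset retraction. For each coset $gA\in\AbCo{G}$, set
\[
A' \;=\; \bigcap\{\,M\leq G : M\text{ is maximal abelian},\ A\leq M\,\}.
\]
Zorn's lemma (applied to the family of abelian subgroups containing $A$, which is closed under chain unions) guarantees that at least one maximal abelian contains $A$, so $A'$ is well defined and is itself an intersection of maximal abelians; also $A\leq A'$. Define $r:\AbCo{G}\to\mAbCo{G}$ by $r(gA)=gA'$. Three routine verifications are needed: (i) $r$ is well defined on cosets, since $gA=g'A$ gives $g^{-1}g'\in A\leq A'$; (ii) $r$ is a poset map, because if $gA\subseteq hB$ then $g\in hB$ forces $hB=gB$ and $A\leq B$, and then every maximal abelian above $B$ is above $A$, hence $A'\leq B'$ and $gA'\subseteq gB'=hB'$; and (iii) $r$ restricts to the identity on $\mAbCo{G}$, because if $A=\bigcap_\alpha M_\alpha$ with each $M_\alpha$ maximal abelian, then each $M_\alpha$ is one of the $M$'s defining $A'$, so $A'\leq A$, while $A\leq A'$ always. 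Finally, $A\leq A'$ gives the pointwise comparison $\mathrm{id}\leq r$ in $\AbCo{G}$; since comparable poset maps induce homotopic maps on nerves, $r$ defines a deformation retraction of $|\AbCo{G}|$ onto $|\mAbCo{G}|$.

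The hard part is not the poset step, which is essentially formal once the retraction $r$ is written down; it is the equivalence $\Ecom{G}\simeq|\AbCo{G}|$, where one must be careful with the conventions used to define $\Ecom{G}$ (via pullback of the universal bundle $EG\to BG$ along $\Bcom{G}\hookrightarrow BG$) and with the validity of the homotopy colimit decomposition in the absence of finiteness on $G$. For the reader's convenience I would either reproduce the short simplicial argument in full, or at least pinpoint the exact statement inside \cite{Okay2014, ACGV} being invoked, so that later sections can freely replace $\Ecom{G}$ with $|\mAbCo{G}|$.
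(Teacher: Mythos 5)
Your treatment of $(2)\Leftrightarrow(3)$ is correct, and it is in fact \emph{more} detailed than the paper's, which dismisses this step with the single remark that $\mAbCo{G}$ is cofinal in $\AbCo{G}$; your closure operator $gA\mapsto gA'$ is the honest content behind that remark. All three verifications are right ($gA\subseteq hB$ forces $hB=gB$ and $A\leq B$, hence $A'\leq B'$), Zorn's lemma applies because the union of a chain of abelian subgroups is abelian, and---a point worth making explicit---the paper's definition of $\mAbCo{G}$ allows \emph{arbitrary} intersections of maximal abelian subgroups, so the possibly infinite intersection $A'$ really does lie in $\mAbCo{G}$. The comparison $\mathrm{id}\leq \iota\circ r$ then gives homotopic induced maps on order complexes, and the standard closure-operator lemma upgrades this to a deformation retraction, as you claim.

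The genuine flaw is your opening description of $\Ecom{G}$: the simplicial set whose $n$-simplices are tuples $(g_0,\dots,g_n)\in G^{n+1}$ with \emph{pairwise commuting entries} is not $\Ecom{G}$. Since $\Ecom{G}$ is the pullback of $EG\to BG$ along $\Bcom{G}\hookrightarrow BG$, and the projection $EG_n\to BG_n$ sends $(g_0,\dots,g_n)$ to the tuple of successive ratios $g_{i-1}g_i^{-1}$, the correct condition is that the ratios $g_ig_j^{-1}$ pairwise commute, i.e.\ that the $g_i$ all lie in a \emph{single coset of an abelian subgroup}; the entries of such a tuple need not commute with one another (already every pair $(g_0,g_1)$ is a $1$-simplex of the pullback, since a single ratio imposes no condition, whereas your model only admits commuting pairs). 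The pairwise-commuting model you wrote down is instead $\bigcup_{A\in\Ab{G}}EA$, and that space is contractible: each $EA$ is contractible, every finite intersection is of the form $E(A_1\cap\cdots\cap A_k)$, hence contractible and nonempty (all contain the vertex $1$), so the nerve lemma applies with nerve a full simplex. Your stated model would thus ``prove'' that $\Ecom{G}$ is always contractible, contradicting the very computations the proposition enables. The slip is self-correcting in your own argument: the cover $\{X_A\}$ by tuples lying in a single coset of $A$ is not even a cover of the pairwise-commuting model, but its union is exactly the correct pullback, and with that correction your nerve-of-cover/hocolim argument is sound and coincides with the paper's proof, which runs $\Bcom{G}\simeq\hocolim_{A\in\Ab{G}}BA$, pulls back along $\pt\to BG$ to get $\Ecom{G}\simeq\hocolim_{A\in\Ab{G}}G/A$, and then applies Thomason's theorem to identify this homotopy colimit of discrete spaces with the nerve of the coset poset $\AbCo{G}$, as in \cite{Okay2014}. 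Fix the definition of the simplicial model (ratios commute, not entries) and the proposal becomes a complete proof along essentially the paper's lines, with the poset step done more explicitly than in the paper itself.
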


\begin{proof}
This result is essentially contained in \cite{Okay2014}, but for the reader's convenience we sketch the argument in addition to providing references. Note that \cite{Okay2014} states these results only for finite groups, but the argument only requires discreteness.

Recall that $\Ecom{G}$ is the homotopy fiber of the canonical map $\Bcom{G} \to BG$, and that $\Bcom{G}$ in turn is homotopy equivalent to the $\hocolim_{A \in \Ab{G}} BA$. Since homotopy colimits are preserved by homotopy pullback along a fixed map, we can pullback the homotopy colimit for $\Bcom{G}$ along the map $\pt\to BG$ to obtain (\cite[Equation (3.5.1)]{Okay2014}):
\[\Ecom{G} \simeq \hocolim_{A \in \Ab{G}} \hofib{(BA \to BG)} \simeq \hocolim_{A \in \Ab{G}} G/A.\]

By Thomason's theorem the homotopy colimit of a functor like $A \to G/A$ whose values are discrete spaces can be computed as the geometric realization of the nerve of the Grothendieck construction (or category of elements) of that functor. In this case the objects of this Grothendieck construction would be the cosets of abelian subgroups of $G$ and unwinding the definition of the morphisms they turn out to be simply inclusions of cosets (see the remark immediately following \cite[Theorem 6.1]{Okay2014}). This establishes the equivalence of (1) and (2). The equivalence of (2) and (3) is simply because $\mAbCo{G}$ is cofinal in $\AbCo{G}$.
\end{proof}

The proof of the following lemma is elementary and it is left to the reader. We will also be using this result all throughout the paper without mentioning it. 

\begin{lemma}
Let $G$ be a group. Then (the geometric realization of) $\mAbCo{G}$ is connected. 
\end{lemma}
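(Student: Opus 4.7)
The plan is to show that any two vertices of $\mAbCo{G}$ can be connected by a zigzag of order relations, which is equivalent to path-connectedness of the geometric realization. I would split the argument into three layers, corresponding to how the poset is built up.

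First, every vertex $gB \in \mAbCo{G}$ is comparable to some coset $gM$ of a maximal abelian subgroup: by definition $B$ is an intersection of maximal abelian subgroups, so $B \subseteq M$ for any maximal abelian $M$ appearing in that intersection, whence $gB \subseteq gM$ and thus $gB \leq gM$. Hence it suffices to prove that the subposet of cosets of maximal abelian subgroups lies in one component.

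Second, I would argue that any coset $gM$ of a maximal abelian subgroup is connected to some subgroup $L$ (viewed as the coset $eL$) which is itself maximal abelian. Pick any $x \in gM$ and, using Zorn's lemma on abelian subgroups containing $\langle x\rangle$, choose a maximal abelian subgroup $L \ni x$. Then $x \in gM \cap L$, and a standard coset computation shows that a nonempty intersection of cosets $gM \cap L$ is itself a coset of $M \cap L$, say $y(M \cap L)$. Since $M \cap L$ is an intersection of maximal abelian subgroups, $y(M \cap L) \in \mAbCo{G}$, and we have $y(M \cap L) \leq gM$ as well as $y(M \cap L) \leq L$. This produces a length-two zigzag $gM \geq y(M \cap L) \leq L$.

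Third, any two maximal abelian subgroups $L, L'$ (as vertices $eL, eL'$ of the poset) are directly connected via their intersection: $L \cap L' \in \mAbCo{G}$ by definition, and $L \cap L' \leq L$, $L \cap L' \leq L'$. Concatenating the three steps shows that any two vertices of $\mAbCo{G}$ lie in a common component, so $|\mAbCo{G}|$ is connected.

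There is no real obstacle here; the only thing to be careful about is to use precisely the definition of $\mAbCo{G}$ (cosets of \emph{intersections} of maximal abelian subgroups, not just maximal ones) so that the intermediate cosets $y(M \cap L)$ and $L \cap L'$ are genuine elements of the poset.
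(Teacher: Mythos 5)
Your proof is correct. There is nothing in the paper to compare it against: the authors explicitly state that the proof of this lemma ``is elementary and it is left to the reader,'' and your three-layer zigzag is exactly the expected elementary argument. The two facts you rely on are both valid and correctly deployed: Zorn's lemma applies because the union of a chain of abelian subgroups is abelian, and the coset computation is right, since $x \in gM \cap L$ gives $gM \cap L = xM \cap xL = x(M \cap L)$, a genuine vertex of $\mAbCo{G}$ because $M \cap L$ is an intersection of maximal abelian subgroups. One small streamlining worth knowing: by the paper's Remark~2.3 the center $Z$ is the intersection of \emph{all} maximal abelian subgroups, so $Z$ is itself a vertex and every $B$ in the poset contains $Z$; hence every vertex $gB$ lies above $gZ$, and choosing any maximal abelian $M \ni g$ gives the zigzag $gB \supseteq gZ \subseteq gM = M \supseteq Z$, connecting every vertex to the single basepoint $Z$ and absorbing your steps two and three into one move.
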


\section{ Geometric 3-manifolds}\label{sec-3mflds}

In this section we will review a bit of $3$-manifold theory. For more details see \cite{Sc83}, \cite{Mo05}.

\subsection{Geometric 3-manifolds}
A \emph{Riemannian} manifold $X$ is a smooth
manifold that admits a Riemannian metric.  If the isometry group $\isom(X)$
acts transitively, we say $X$ is \emph{homogeneous}.  If in addition $X$ has a quotient of finite
volume, $X$ is \emph{unimodular}.  A \emph{geometry} is a simply-connected,
homogeneous, unimodular Riemannian manifold along with its isometry group.  Two
geometries $(X,\isom(X))$ and $(X',\isom(X'))$ are \emph{equivalent} if
$\isom(X)\cong\isom(X')$ and there exists a diffeomorphism $X\to X'$ that
respects the $\isom(X), \isom(X')$ actions.  A geometry $(X,\isom(X))$ (often
abbreviated $X$) is \emph{maximal} if there is no Riemannian metric on $X$ with
respect to which the isometry group strictly contains $\isom(X)$.  A manifold
$M$ is called \emph{geometric} if there is a geometry $X$ and discrete subgroup
$\Gamma\leq\isom(X)$ with free $\Gamma$-action on $X$ such that $M$ is
diffeomorphic to the quotient $X/\Gamma$; we also say that $M$ \emph{admits a
geometric structure} modeled on $X$.  Similarly, a manifold with non-empty
boundary is geometric if its interior is geometric.

It is a consequence of the uniformization theorem that compact surfaces
(2-manifolds) admit Riemannian metrics with constant curvature; that is, compact
surfaces admit geometric structures modeled on $\mbs^2$, $\mbe^2$, or $\hyp$. 
In dimension three, we are not guaranteed constant curvature.  Thurston
showed that there are eight $3$-dimensional maximal geometries up to
equivalence (\cite[Theorem 5.1]{Sc83}): $\mbs^3$, $\mbe^3$, $\hypp$,
$\mbs^2\times\mbe$, $\hxr$, $\pslt$, $\nil$, and $\sol$.

\subsection{Prime decomposition}
The material of this subsection it is only stated by the sake of completeness as these concepts are mentioned in the statement of \Cref{thm:main:prime:decomposition}. The prime decomposition theorem will not be used anywhere else in this article.

A \emph{closed} $n$-manifold is an $n$-manifold that is compact with empty
boundary.  A \emph{connected sum} of two $n$-manifolds $M$ and $N$, denoted $M\#
N$, is a manifold created by removing the interiors of a smooth $n$-disc $D^n$
from each manifold, then identifying the boundaries $\mbs^{n-1}$.  An $n$-manifold is
\emph{nontrivial} if it is not homeomorphic to $\mbs^n$.  A \emph{prime}
$n$-manifold is a nontrivial manifold that cannot be decomposed as a connected
sum of two nontrivial $n$-manifolds; that is, $M=N\# P$ for some $n$-manifolds
$N,P$ forces either $N=\mbs^n$ or $P=\mbs^n$.  An $n$-manifold $M$ is called
\emph{irreducible} if every 2-sphere $\mbs^2\subset M$ bounds a ball $D^3\subset M$. It is well-known that all orientable prime manifolds are irreducible with the exception of $S^1\times
S^2$. The following is a well-known theorem of Kneser (existence) and Milnor (uniqueness).

\begin{theorem}[Prime decomposition]
\label{prime decomposition}
Let $M$ be a closed oriented nontrivial 3-manifold.  Then $M=P_1\#\cdots\#
P_n$ where each $P_i$ is prime.  Furthermore, this decomposition is unique up to
order and homeomorphism.
\end{theorem}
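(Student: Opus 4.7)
The plan is to prove existence and uniqueness separately, following the classical strategies of Kneser and Milnor.

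For existence I would argue by an inductive cut-and-cap procedure. Starting from $M$, look for an embedded $2$-sphere $S\subset M$ that does not bound a $3$-ball. If no such sphere exists, $M$ is irreducible, hence prime, and we are done. Otherwise cut $M$ along $S$ and cap off the resulting boundary spheres with $3$-balls: if $S$ separates, this produces $M=N_1\# N_2$ with both summands nontrivial; if $S$ is non-separating, one obtains $M=N\#(S^2\times S^1)$. Iterating the construction on the resulting summands yields a connected sum decomposition into prime pieces. The delicate point is to ensure the process terminates. For this I would invoke Kneser's finiteness theorem: fix a triangulation $T$ of $M$ and put any disjoint family $\mathcal{S}$ of essential, pairwise non-parallel $2$-spheres into normal form with respect to $T$. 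A normal $2$-sphere meets each tetrahedron in a bounded number of elementary disks (triangles and quadrilaterals), and a combinatorial bookkeeping argument shows that the cardinality of $\mathcal{S}$ is bounded by a linear function of the number of tetrahedra of $T$. Hence no infinite decomposition is possible.

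For uniqueness, suppose $M$ admits two prime decompositions coming from disjoint sphere systems $\mathcal{S},\mathcal{S}'\subset M$. Perturb them to be transverse, so that $\mathcal{S}\cap\mathcal{S}'$ is a disjoint union of circles. I would run an innermost-circle argument: pick an innermost intersection circle $c$ on some $S\in\mathcal{S}$, so that $c$ bounds a disk $D\subset S$ whose interior is disjoint from $\mathcal{S}'$. The same $c$ also bounds a disk $D'$ in some $S'\in\mathcal{S}'$, and $D\cup D'$ is a $2$-sphere contained in a single prime factor. If that factor is irreducible, this sphere bounds a ball and an isotopy across that ball strictly decreases $|\mathcal{S}\cap\mathcal{S}'|$. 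After finitely many such moves one reaches $\mathcal{S}\cap\mathcal{S}'=\emptyset$, and then a combinatorial comparison of the components of $M\setminus(\mathcal{S}\cup\mathcal{S}')$ produces a bijection between the two systems matching the prime factors up to homeomorphism. The $S^1\times S^2$ summands must be handled separately because they are prime without being irreducible; here one uses that non-separating spheres can always be paired off and their contribution counted independently.

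The main obstacle is Kneser's finiteness. Setting up normal surface theory, choosing a triangulation carefully, and producing the explicit linear bound on the number of pairwise non-parallel essential normal $2$-spheres requires a nontrivial combinatorial analysis, and historically this was the hardest part of the theorem. The innermost-disk step in uniqueness is conceptually clean once irreducibility is available, but accommodating the reducible prime $S^1\times S^2$ remains a subtle bookkeeping issue throughout.
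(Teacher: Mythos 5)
The paper does not prove this statement at all: it is quoted verbatim as the classical Kneser--Milnor theorem (existence due to Kneser, uniqueness due to Milnor), stated only for completeness because it appears in the hypotheses of Theorem~\ref{thm:main:prime:decomposition}, and it is explicitly never used elsewhere in the article. So there is no proof in the paper to compare against; what you have written is the standard textbook argument (essentially as in Hatcher's notes on $3$-manifolds or Hempel's book), and as a sketch it is faithful: cut-and-cap with termination guaranteed by Kneser's normal-surface bound on disjoint, pairwise non-parallel essential spheres, then uniqueness by transversality and innermost-circle reduction, with the reducible prime $S^1\times S^2$ treated separately (its multiplicity is recovered, e.g., from the rank of the free part of $\pi_1$ or from $H_1$). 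Two small points deserve care if you were to write this out in full. First, in the innermost-disk step, since $\operatorname{int}(D)$ is disjoint from $\mathcal{S}'$, the sphere $D\cup D'$ lies in the closure of a single complementary component of $\mathcal{S}'$, i.e.\ in a \emph{punctured} prime factor of the second decomposition, not of the first; the isotopy-across-a-ball move must then be justified inside that punctured factor, where a sphere may also be parallel to a boundary (capping) sphere rather than bound a ball, and this case needs its own (easy) treatment. Second, the usual alternative to your isotopy move is surgery of $S$ along $D'$, replacing one sphere of $\mathcal{S}$ by two; either variant works, but the surgery version makes the induction cleaner because it never requires the ball to avoid the other spheres. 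Neither point is a genuine gap in a sketch at this level of detail.
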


%Another well known result we will need is the Jaco--Shalen--Johannson decomposition. The version stated here also uses Perelman's work.

%\begin{theorem}[JSJ decomposition]
%\label{jsj decomposition}
%For a closed, prime, oriented 3-manifold $M$ there exists a collection
%$T\subseteq M$ of disjoint incompressible tori (i.e. two sided properly embedded and $\pi_1$-injective), such that each component of
%$M - T$ is either a hyperbolic or a Seifert fibered manifold.  A minimal
%such collection $T$ is unique up to isotopy.
%\end{theorem}

\section{Some structural results on the homotopy type of $\Ecom{G}$}

In this section we prove some foundational results on the homotopy type of $\Ecom{G}$, which are interesting in their own right. These results will be used in the rest of the paper.

\begin{proposition}\label{thm:central:extension}
Consider the following extension of groups
\[1\to K \to G \to Q \to 1.\]
Let $\calP$ be the poset of cosets of all abelian subgroups of $G$ that contain $K$, and let $\calQ$ be the poset of cosets of subgroups of $Q$ whose preimage in $G$ is abelian. Then $\calP$ and $\calQ$ are isomorphic.
Moreover, if $\calP_{\mathsf{max}}$ (resp. $\calQ_{\mathsf{max}}$) is the poset of intersections of finitely many maximal members of $\calP$ (resp. $\calQ$), then $\calP_{\mathsf{max}}$ is isomorphic to $\calQ_{\mathsf{max}}$.
\end{proposition}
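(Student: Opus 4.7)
The plan is to use the quotient map $\pi \colon G \to Q$ to set up the bijection directly, then check it is a poset isomorphism, and finally check that it restricts correctly to the sub-posets of intersections of maximal elements. First, I would define $\Phi \colon \calP \to \calQ$ by $gA \mapsto \pi(g)\pi(A)$. The key observation is that, since every abelian subgroup $A$ appearing in $\calP$ contains $K$, the correspondence theorem applies: $A = \pi^{-1}(\pi(A))$, so the coset $gA$ is the full $\pi$-preimage of $\pi(g)\pi(A)$; in particular $\pi(A)$ has abelian preimage, so $\pi(g)\pi(A)$ genuinely lies in $\calQ$. The inverse $\Psi$ sends a coset $xB$ in $\calQ$ to $g\pi^{-1}(B)$ for any lift $g$ of $x$; this lies in $\calP$ because $\pi^{-1}(B)$ contains $K$ and is abelian by hypothesis on $B$. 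That $\Phi$ and $\Psi$ are mutually inverse follows immediately from the identity $\pi^{-1}\pi = \mathrm{id}$ on subgroups containing $K$ (and on cosets of such subgroups).

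I would then verify both maps respect the order: if $gA \subseteq g'A'$ in $\calP$, applying $\pi$ yields $\Phi(gA) \subseteq \Phi(g'A')$ in $\calQ$; conversely, if $\pi(g)\pi(A) \subseteq \pi(g')\pi(A')$, applying $\pi^{-1}$ recovers $gA \subseteq g'A'$, again because the cosets equal the preimages of their images. This establishes the first half of the proposition.

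For the ``moreover'' statement I would show two things. (i) $\Phi$ preserves maximality: if $A$ is maximal abelian containing $K$ but $\pi(A) \subsetneq B'$ with $\pi^{-1}(B')$ abelian, then $A \subsetneq \pi^{-1}(B')$, contradicting the maximality of $A$; the reverse direction is symmetric. (ii) $\Phi$ preserves intersections of cosets: writing each $C_i = \pi^{-1}(\pi(C_i))$ and using that $\pi^{-1}$ commutes with intersections, one obtains $C_1 \cap C_2 = \pi^{-1}(\pi(C_1) \cap \pi(C_2))$, so in particular $\pi(C_1 \cap C_2) = \pi(C_1) \cap \pi(C_2)$ and the intersections are simultaneously (non)empty. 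Combining (i) and (ii), an iterated intersection of maximal cosets of $\calP$ maps bijectively to an iterated intersection of maximal cosets of $\calQ$, and vice versa; hence $\Phi$ and $\Psi$ restrict to a poset isomorphism $\calP_{\mathsf{max}} \cong \calQ_{\mathsf{max}}$.

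I do not anticipate a genuine obstacle: the whole argument is a formal consequence of the correspondence theorem together with the fact that $\pi^{-1}$ commutes with intersections. The step that needs the most care is noting that the intersection of finitely many cosets of abelian subgroups containing $K$, when nonempty, is itself a coset of an abelian subgroup containing $K$ (namely, the coset is of the intersection of the respective subgroups, which is still abelian and still contains $K$); this ensures that $\calP_{\mathsf{max}}$ and $\calQ_{\mathsf{max}}$ really do consist of elements of $\calP$ and $\calQ$, so that the restriction of $\Phi$ to them makes sense.
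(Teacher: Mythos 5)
Your proof is correct and follows essentially the same route as the paper's: both rest on the correspondence theorem for subgroups containing $K$, upgraded to cosets via the observation that each coset $gA$ with $K\le A$ is the full $\pi$-preimage of $\pi(g)\pi(A)$ (the paper phrases this through the index equality $[G:H]=[Q:\bar H]$). You simply spell out the verification of the ``moreover'' clause---preservation of maximality and of coset intersections---which the paper leaves implicit as an immediate consequence of the poset isomorphism.
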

\begin{proof}
The standard bijection between subgroups of $G$ containing $K$ and subgroups of $Q$ restricts to a bijection between abelian subgroups of $G$ containing $K$ and subgroups of $Q$ with abelian preimage in $G$. Given a subgroup $H$ of $G$ that contains $K$ let $\bar{H}$ be its image in $Q$. Notice that $[G:H] = [Q : \bar{H}]$. This implies that the bijection we have between subgroups also gives rise to a isomorphism of poset cosets.
\end{proof}

We will often apply this proposition in cases where every abelian subgroup of $Q$ has abelian preimage in $G$, so that $\calQ$ and $\calQ_{\mathsf{max}}$ are actually $\mAb{Q}$ and $\mAbCo{Q}$ respectivelly. Note further in that case the extension is central.

\begin{theorem}\label{thm:countable:wedge}
Let $G$ be a nonabelian countably infinite group. Assume that for every pair of distinct maximal abelian subgroups $A$ and $B$ of $G$, $A\cap B=1$. Then, $\Ecom{G}\simeq \bigvee_{\mathbb{N}} S^1$.
\end{theorem}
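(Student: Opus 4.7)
The plan is to identify $\Ecom{G} \simeq |\mAbCo{G}|$ as a $1$-dimensional graph and count its cycles directly.

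Under the hypothesis, any intersection of two distinct maximal abelian subgroups is trivial, so the only subgroups appearing in $\mAbCo{G}$ are the maximal abelian subgroups themselves together with $\{1\}$. Hence the poset consists of the singletons $\{g\}$ for $g \in G$ and the cosets $hA$ of maximal abelian subgroups. I would first verify that $\mAbCo{G}$ has height exactly $1$: a strict containment $gA \subsetneq g'A'$ between cosets of maximal abelian subgroups would, after a translation, force $A \subseteq A'$, hence $A = A'$ by maximality; but distinct cosets of the same subgroup are disjoint. Consequently $|\mAbCo{G}|$ is a graph, bipartite with parts $\{\,\{g\} : g \in G\,\}$ and $\{\,hA : A \text{ maximal abelian},\ h \in G\,\}$, and with an edge joining $\{g\}$ and $hA$ precisely when $g \in hA$.

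By the preceding lemma this graph is connected, and by countability of $G$ it is countable; hence it is homotopy equivalent to a wedge of at most countably many circles, and it suffices to exhibit $\aleph_0$ linearly independent $1$-cycles. Fix a maximal abelian subgroup $A_1$ and a nontrivial element $a \in A_1$. For every maximal abelian $A \neq A_1$ and every $b \in A \setminus \{1\}$, the product $ab$ is nontrivial (otherwise $a = b^{-1} \in A_1 \cap A = \{1\}$), so it lies in a unique maximal abelian subgroup $A_{ab}$, which in fact differs from both $A_1$ and $A$. The six elements
\[
\{1\},\ A_1,\ \{a\},\ aA,\ \{ab\},\ A_{ab}
\]
are pairwise distinct, and consecutive ones are adjacent in the graph, giving a hexagonal cycle $C_{A,b}$.

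To conclude, I would mark each $C_{A,b}$ by the edge $e_{A,b}$ joining $aA$ and $\{ab\}$, and verify via a short case analysis (using $b \neq 1$, $a \notin A$, $A_1 \cap A = \{1\}$, and the fact that $aA$ is not a subgroup) that $e_{A,b}$ appears in $C_{A',b'}$ only when $(A',b') = (A,b)$. Consequently, any relation $\sum \lambda_{A,b}\, C_{A,b} = 0$ in $H_1$ forces every $\lambda_{A,b} = 0$. The set of pairs $(A,b)$ is in bijection with $G \setminus A_1$ (each element $b \in G \setminus A_1$ is nontrivial and lies in a unique maximal abelian $A \neq A_1$), and $|G \setminus A_1| = \aleph_0$ since $G$ is countably infinite with $A_1 \neq G$. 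Matching the countable upper bound, $H_1(|\mAbCo{G}|)$ has rank exactly $\aleph_0$, so $\Ecom{G} \simeq \bigvee_{\mathbb{N}} S^1$. The main technical point is the edge-uniqueness step in the independence argument; everything else is routine bookkeeping.
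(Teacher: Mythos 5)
Your proof is correct, and while its first half coincides with the paper's, the second half takes a genuinely different route. Both arguments begin the same way: the trivial-intersection hypothesis forces $\mAbCo{G}$ to have height $1$ (your translation argument for why no coset of a maximal abelian subgroup strictly contains another is exactly right), so by \Cref{Ecom_mAbCo} and the connectedness lemma the realization is a connected countable graph, hence a wedge of at most countably many circles. The divergence is in producing infinitely many circles. The paper constructs an explicit connected subcomplex $Y$ containing all vertices (essentially a spanning forest built from edges $\{\{g\},A\}$ and one chosen edge per non-subgroup coset), extends it to a maximal tree, and then counts infinitely many edges outside $Y$ via a dichotomy: among the maximal abelian subgroups there is either one of infinite index or an infinite proper one, and each case is handled separately. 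You instead exhibit explicit hexagonal cycles $C_{A,b}$ through $\{1\}, A_1, \{a\}, aA, \{ab\}, A_{ab}$ and prove independence in $H_1$ by a private-edge argument; since the graph has no $2$-cells, independence as $1$-chains is independence in homology, so this is sound. Your deferred ``short case analysis'' does go through: every maximal abelian subgroup is nontrivial (each nontrivial element lies in one), so singleton vertices can only match singleton vertices and coset vertices only coset vertices; $aA$ is never a subgroup because $a \notin A$ (else $a \in A_1 \cap A = 1$), which rules out matching $e_{A,b}$ against the four edges of $C_{A',b'}$ incident to the subgroup vertices $A_1$ and $A_{ab'}$; $b' \neq 1$ rules out the edge $\{\{a\}, aA'\}$; and the remaining match forces $aA = aA'$ and $ab = ab'$, i.e.\ $(A,b) = (A',b')$. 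As for what each approach buys: the paper's tree argument is shorter and needs no homology, but requires the index dichotomy; yours avoids that case split entirely and, via the clean bijection between pairs $(A,b)$ and $G \setminus A_1$, directly pins the rank of $H_1$ at $\aleph_0$, at the cost of the vertex-distinctness and edge-uniqueness bookkeeping.
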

\begin{proof}
The hypothesis implies directly that $\mAbCo{G}$ is of height 1, thus its geometric realization has dimension 1 and it has a finite or countable number of cells. We conclude, by \Cref{Ecom_mAbCo} that $\Ecom{G}$ is homotopically equivalent to a wedge of circles. It only remains to be proved that there are infinitely many of these circles.

Consider $Y$ the subcomplex of the geometric realization $X$ of $\mAbCo{G}$ given as follows: 

\begin{itemize}
    \item All the vertices of $X$ belong to $Y$.
    \item All edges of the form $\{\{g\},A\}$  with $g\in A$ and $A$ a maximal abelian subgroup of $G$, belong to $Y$. 
    \item For each vertex of $X$ of the form $xA$ with $A\neq 1$, choose and fix a representative of the corresponding coset, lets say $x$. The edge $\{\{x\}, xA\}$ belongs to $Y$.
\end{itemize}

We claim $Y$ is connected. First, any coset of the trivial group, say $\{g\}$, is contained in some maximal abelian subgroup $A$, and $Y$ has edges $\{\{1\},A\},\{\{g\},A\}$, which connect $\{g\}$ to $\{1\}$. Second, any coset of a maximal abelian subgroup $A$, has a chosen representative $x$ and $Y$ has the edge $\{\{x\}, xA\}$ connecting $xA$ to a coset of the trivial group, which we already connected to $\{1\}$.

Since $Y$ is connected and contains all vertices of $X$, a maximal subtree $T$ of $Y$ is also a maximal subtree of $X$ containing all vertices. To finish the proof it is enough to show that there are infinitely many edges in $X$ that do not belong to $Y$. Among the maximal abelian subgroups of $G$ there is either a subgroup $H$ of infinite index or an infinite proper subgroup $K$; we deal with each case separately. Let $H$ be as in the first case, note that, by construction of $Y$, for each coset $gH\neq H$ there is exactly one edge in $Y$  of the form $\{\{x\},gH\}$, and since $H$ is nontrivial there is an element $y\neq x$ such that the edge $\{\{y\},gH\}$ does not belong to $Y$; this finishes the proof in the first case. Let $K$ be as in the second case, since $K$ is proper there is a coset $gK\neq K$, as in the previous case, all but one of the edges of the form $\{\{y\},gK\}$ do not belong to $Y$; this finishes the proof of the second case.
\end{proof}

The following corollary is a direct consequence of \Cref{thm:central:extension} and \Cref{thm:countable:wedge}.

\begin{corollary}\label{cor:height:1}
Let $G$ be a group, and denote by $Z$ the center of $G$. Assume that for every pair of distinct maximal abelian subgroups $A$ and $B$ of $G$, $A\cap B=Z$, and assume $Z$ has countable infinite index in $G$. Then, $\Ecom{G}\simeq \bigvee_{\mathbb{N}} S^1$.
\end{corollary}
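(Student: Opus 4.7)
The plan is to reduce to Theorem~\ref{thm:countable:wedge} by quotienting out the center $Z$ via Theorem~\ref{thm:central:extension}. Consider the central extension $1 \to Z \to G \to Q \to 1$ with $Q = G/Z$. By \Cref{remark:maximal:contains:center} every maximal abelian subgroup of $G$ contains $Z$, so every subgroup appearing in $\mAbCo{G}$ does as well. In the notation of \Cref{thm:central:extension}, the poset $\mathcal{P}$ of cosets of abelian subgroups of $G$ containing $Z$ therefore has as maximal elements precisely the cosets of the maximal abelian subgroups of $G$, so $\mathcal{P}_{\mathsf{max}}$ coincides with $\mAbCo{G}$. That theorem then supplies a poset isomorphism $\mAbCo{G} \cong \mathcal{Q}_{\mathsf{max}}$.

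Next I would unpack $\mathcal{Q}_{\mathsf{max}}$. Its maximal elements are cosets of the subgroups $\bar A := A/Z$ of $Q$, where $A$ ranges over maximal abelian subgroups of $G$; for distinct $A \neq B$, the hypothesis gives $\bar A \cap \bar B = (A \cap B)/Z = 1$. Hence $\mathcal{Q}_{\mathsf{max}}$ has height one, its vertices are the cosets $g\bar A$ together with the singletons $\{q\}$ for $q \in Q$, and its edges join $\{q\}$ to $g\bar A$ exactly when $q \in g\bar A$. Thus $|\mathcal{Q}_{\mathsf{max}}|$ is a connected $1$-dimensional CW-complex, and because $[G:Z] = \aleph_0$ forces $Q$ to be countable, the complex has the homotopy type of an at most countable wedge of circles.

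To pin down the cardinality of that wedge, I would run the spanning-tree construction from the proof of \Cref{thm:countable:wedge} verbatim on this complex, with the family $\{\bar A\}$ playing the role there played by the maximal abelian subgroups of the ambient group. Inspection of that proof shows it uses only three features of the family: pairwise trivial intersection; coverage of the ambient group (ensured here because every $g \in G$ lies in some maximal abelian subgroup by Zorn's lemma, so $\bar g \in Q$ lies in some $\bar A$); and the existence of a member that is either of infinite index or an infinite proper subgroup (forced because $[Q:\bar A]\cdot |\bar A| = |Q| = \aleph_0$, combined with the fact that $G$ is nonabelian --- which itself is forced by $[G:Z] = \aleph_0$ --- providing at least two distinct $\bar A$'s, hence at least one proper $\bar A$). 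The main step that warrants care is precisely the verification that this argument goes through even though the $\bar A$ need not be maximal abelian subgroups of $Q$ (indeed they rarely are: for $G$ the integer Heisenberg group, $Q \cong \mathbb{Z}^2$ is abelian while the $\bar A$'s are proper cyclic subgroups). Granted this, we conclude $\Ecom{G} \simeq |\mAbCo{G}| \simeq |\mathcal{Q}_{\mathsf{max}}| \simeq \bigvee_{\mathbb{N}} S^1$.
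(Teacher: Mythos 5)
Your proposal is correct and follows exactly the route the paper intends: the paper's entire proof of \Cref{cor:height:1} is the one-line remark that it is a direct consequence of \Cref{thm:central:extension} and \Cref{thm:countable:wedge}, and you simply supply the omitted details. In doing so you rightly isolate the one genuinely delicate point the paper glosses over --- the subgroups $\bar A \le G/Z$ need not be maximal abelian subgroups of the quotient (your Heisenberg example, where $G/Z \cong \dbZ^2$ is abelian, makes this vivid), so \Cref{thm:countable:wedge} cannot be cited verbatim for $Q$ and its spanning-tree argument must instead be re-run on $\mathcal{Q}_{\mathsf{max}}$, whose required properties (pairwise trivial intersections, coverage, and a member of infinite index or an infinite proper member) you verify correctly.
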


\begin{theorem}\label{thm:direct:free:product} Let $G_1$ and $G_2$ be nontrivial groups. Then
\begin{enumerate}
    \item $\Ecom{G_1 \ast G_2} \simeq \bigvee_{\mathbb{N}} \Ecom{G_1} \vee \bigvee_{\mathbb{N}} \Ecom{G_2} \vee \bigvee_{\mathbb{N}} S^1$, and
    \item $\Ecom{G_1 \times G_2} \simeq \Ecom{G_1} \times \Ecom{G_2}$.
\end{enumerate}
\end{theorem}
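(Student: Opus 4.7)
Both parts rest on the combinatorial model $\Ecom{G} \simeq |\mAbCo{G}|$ from \Cref{Ecom_mAbCo}.

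For part~(2), the key structural fact is that a maximal abelian subgroup $M$ of $G_1 \times G_2$ is necessarily a product $A_1 \times A_2$ with each $A_i$ maximal abelian in $G_i$: writing $\pi_i\colon G_1\times G_2 \to G_i$ for the projections, the subgroup $\pi_1(M) \times \pi_2(M)$ is abelian and contains $M$, so maximality forces $M = \pi_1(M) \times \pi_2(M)$, and then each $\pi_i(M)$ must itself be maximal abelian. Intersections and cosets of such products factor componentwise, which yields a natural poset isomorphism $\mAbCo{G_1 \times G_2} \cong \mAbCo{G_1} \times \mAbCo{G_2}$. Taking realizations of nerves and invoking the standard fact that the realization of a product of simplicial sets is (naturally) homotopy equivalent to the product of their realizations on CW complexes gives $\Ecom{G_1 \times G_2} \simeq \Ecom{G_1} \times \Ecom{G_2}$.

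For part~(1), set $G = G_1 \ast G_2$ and use its action on the Bass--Serre tree $T$, which has trivial edge stabilizers and vertex stabilizers conjugate to $G_1$ or $G_2$. Every nontrivial element of $G$ is either elliptic (fixing a vertex, hence conjugate into some $G_i$) or hyperbolic (translating along a unique axis). Since vertex groups are malnormal (a standard consequence of trivial edge stabilizers) and hyperbolic elements have cyclic centralizers, the maximal abelian subgroups of $G$ fall into three families: conjugates of maximal abelians of $G_1$, conjugates of maximal abelians of $G_2$, and maximal cyclic subgroups generated by hyperbolic elements. Moreover, two distinct maximal abelians of $G$ intersect nontrivially only if they are both contained in the same vertex-group conjugate $g G_i g^{-1}$, in which case their intersection coincides with the intersection taken inside that conjugate.

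These facts let me describe $\mAbCo{G}$ as a union of local pieces. For each vertex $v = gG_i$ of $T$, the subposet $P_v$ of cosets inside $gG_i$ of intersections of maximal abelians of $g G_i g^{-1}$ is isomorphic to $\mAbCo{G_i}$ via translation by $g$. For each hyperbolic maximal cyclic subgroup $C$, the poset of cosets of $C$ together with the singletons they contain has realization a disjoint union of contractible countably infinite stars, one per coset of $C$. All higher-dimensional cells of $|\mAbCo{G}|$ lie inside some $|P_v|$, and all overlaps between distinct local pieces sit inside the 0-dimensional set of singleton cosets $\{h\}$. I would encode the gluing combinatorially in an auxiliary graph $\Gamma$ whose vertices are all the local pieces (each $|P_v|$ and each individual star) together with the singletons, with an edge whenever a singleton lies in a piece; a direct count (using that $G$ is infinite as a free product of nontrivial groups, and assuming countability of the $G_i$) shows $\Gamma$ is connected and has countably infinite first Betti number.

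The final step applies the standard principle that a CW complex obtained by gluing connected subcomplexes along a discrete set of identifications recorded by a graph $\Gamma$ is homotopy equivalent to the wedge of those pieces together with $b_1(\Gamma)$ circles (contract a spanning tree of $\Gamma$). The contractible stars drop out of the wedge but still contribute to $\Gamma$; the pieces $|P_v|$ contribute $\bigvee_{\mathbb{N}} \Ecom{G_i}$ (one copy per coset of $G_i$, countably infinite in number); and the excess cycles of $\Gamma$ contribute $\bigvee_{\mathbb{N}} S^1$. This yields the claimed formula. The main obstacle is the clean, rigorous execution of this final gluing argument --- verifying that the local pieces and stars really only overlap in singletons, and carefully bookkeeping the cardinalities so as to obtain exactly countably many summands of each of the three kinds.
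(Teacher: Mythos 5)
Your part (2) is essentially the paper's own argument: you identify the maximal abelian subgroups of $G_1\times G_2$ as products $A_1\times A_2$ of maximal abelians, observe that intersections and cosets factor componentwise to get $\mAbCo{G_1\times G_2}\cong\mAbCo{G_1}\times\mAbCo{G_2}$, and realize. For part (1) your classification of maximal abelian subgroups (conjugates of maximal abelians of the factors, plus maximal cyclic subgroups of hyperbolic elements, with trivial cross-intersections) is the same structural input as the paper's, just derived from the Bass--Serre tree with trivial edge stabilizers rather than quoted from Kurosh's subgroup theorem; the two are interchangeable here. Where you genuinely diverge is the endgame: the paper builds one explicit subcomplex $Y$ of $|\mAbCo{G}|$ (spanned by the singletons, the cosets of maximal abelian subgroups, and edges chosen via fixed coset representatives), checks directly that $Y$ is a spanning tree, and collapses it to read off the wedge; you instead decompose $|\mAbCo{G}|$ into local pieces and stars glued along singletons and invoke the graph-of-spaces principle (wedge of pieces plus $b_1(\Gamma)$ circles after contracting a spanning tree of the incidence graph). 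Your route modularizes the collapse into a standard principle, at the cost of having to verify the decomposition hypotheses; the paper's is more hands-on but, frankly, also leaves the final bookkeeping at a similar level of detail. Your explicit countability caveat is implicit in the paper's $\bigvee_{\mathbb{N}}$ as well.

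One claim you make is false as stated and needs repair: ``all higher-dimensional cells of $|\mAbCo{G}|$ lie inside some $|P_v|$.'' Chains $\{h\} < xB_1 < xB_2$, with $B_1 \subsetneq B_2$ nontrivial intersections of maximal abelians of a single vertex-group conjugate, give $2$-simplices through a singleton vertex, and these are \emph{not} contained in $P_v$ as you define it; indeed $P_v\cong\mAbCo{G_i}$ may contain no singletons at all (for $G_i=Q_8$, all pairwise intersections of maximal abelian subgroups equal the center, so the trivial subgroup does not occur in $\mAbCo{Q_8}$, whereas singletons do occur in $\mAbCo{G}$ because distinct conjugates of the factors intersect trivially). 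So with your pieces, the overlaps are not confined to the $0$-dimensional singleton set. The repair is to enlarge each piece to $P_v^{+}=P_v\cup\{\{h\}: h\in gG_i\}$: then distinct enlarged pieces and stars really do meet only in singletons, and the gluing principle applies, provided you also check that the enlargement does not change the homotopy type, i.e.\ $|P_v^{+}|\simeq|P_v|\simeq\Ecom{G_i}$. This follows because, above any singleton $\{h\}$, the elements of $P_v$ containing it form a poset closed under intersection, hence downward directed, hence contractible, so the inclusion $P_v\hookrightarrow P_v^{+}$ is a homotopy equivalence by the standard link criterion (Quillen's Theorem A). With that fix, together with the connectivity and Betti-number count of $\Gamma$ that you defer, your argument goes through and recovers the paper's conclusion.
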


\begin{proof}
\begin{enumerate}
    \item By Kurosh's theorem, any abelian subgroup of $G_1 \ast G_2$ is either a subgroup of a conjugate of $G_1$, a subgroup of a conjugate of $G_2$, or an infinite cyclic subgroup generated by an element which is not conjugate to any element of $G_1$ or $G_2$. Furthermore, abelian subgroups of different types intersect only in the trivial subgroup. This means the poset of abelian (resp. maximal abelian) subgroups of $G_1 \ast G_2$ is a wedge of the all the abelian (resp. maximal abelian) subgroups of all the conjugates of $G_1$, all the abelian (resp. maximal abelian) subgroups of all the conjugates of $G_2$ and all the cyclic subgroups generated by elements (resp. primitive elements) not conjugate to $G_1$ or $G_2$. For the coset posets, we have
    \[\hspace{1cm}\mAbCo{G}=\bigcup_{y\in G/G_1}\bigsqcup_{g\in G/G_1^y}g\cdot\mAbCo{G_1^y}\cup
    \bigcup_{x\in G/G_2}\bigsqcup_{g\in G/G_2^x}g\cdot\mAbCo{G_2^x}\cup
    \mathsf{Co}P\]
    where $P$ is the poset consisting of the subgroup 1 and all maximal abelian subgroups generated by a primitive element not conjugate to either factor; and $\mathsf{Co} P$ denotes the poset of cosets of subgroups in $P$.
    
    Consider the subcomplex $Y$ of the geometric realization $X$ of $\mAbCo{G}$ given as follows: 

\begin{itemize}
    \item For each $g\in G - \{1\}$ choose and fix a maximal abelian subgroup $M_g$ of $G$, such that $g\in M_g$. The edges $\{\{1\},M_g\}$ and $\{\{g\},M_g\}$ belong to $Y$.
    \item For each vertex of $X$ of the form $xA$ with $A$ a maximal abelian subgroup and $x A\neq A$, choose and fix a representative of the coset $xA$, say $x$. The edge $\{\{x\}, xA\}$ belongs to $Y$.
    \item All the vertices in the previous edges belong to $Y$, i.e., the vertices of $Y$ are the singletons and the cosets of all maximal abelian subgroups of $G$. 
\end{itemize}

We claim $Y$ is connected. First, any coset of the trivial group, say $\{g\}$, is contained in $M_g$, and $Y$ has edges $\{\{1\},M_g\},\{\{g\},M_g\}$, which connect $\{g\}$ to $\{1\}$. Second, any coset of a maximal abelian subgroup $A$, has a chosen representative $x$ and $Y$ has the edge $\{\{x\}, xA\}$ connecting $xA$ to the coset $\{x\}$ of the trivial group, which we already connected to $1$. Also from the construction of $Y$, this subcomplex is a tree; indeed, taking $\{1\}$ as the root, the succesive levels of the tree are the $M_g$, then the $\{g\}$ for $g\neq 1$, then the non-subgroup cosets $xA$. 

We have that $Y$ has non-empty intersection with every $g\cdot \mAbCo{G_i^y}$. Also for each $xH$ coset in $P$ with $xH\neq H$, there are infinitely many edges of $X$ that do not belong to $Y$. Therefore, after collapsing $Y$, we have a complex with homotopy type the wedge of infintely many $\Ecom{G_1}$ plus a number of copies of $S^1$, infintely many $\Ecom{G_2}$ plus a number of copies of $S^1$, and infinitely many copies of $S^1$ from $P$. 
    
    \item Any abelian subgroup $M$ of $G_1 \times G_2$ is contained in $\pi_1(M) \times \pi_2(M)$, which is also abelian. Therefore, the maximal abelian subgroups of $G_1\times G_2$ are of the form $A_1\times A_2$ with $A_1$ and $A_2$ maximal abelian subgroups of $G_1$ and $G_2$, respectively. This implies that the poset $\mAbCo{G_1\times G_2}$ is isomorphic to $\mAbCo{G_1}\times\mAbCo{G_2}$ with the coset $(g_1,g_2)A_1 \times A_2$ corresponding to the pair $(g_1 A_1, g_2 A_2)$. As the geometric realization of a poset respects products the result follows. 
\end{enumerate}
\end{proof}

\begin{theorem}\label{thm:torsionfree:hyperbolic}
Let $G$ be a torsion-free hyperbolic group in the sense of Gromov. Assume $G$ is not virtually cyclic. Then
\[\Ecom{G} \simeq \bigvee_{\mathbb{N}} S^1.\]
\end{theorem}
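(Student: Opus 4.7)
The plan is to reduce the statement to a direct application of \Cref{thm:countable:wedge}, which requires (i) that $G$ be nonabelian and countably infinite, and (ii) that any two distinct maximal abelian subgroups of $G$ intersect trivially. Once these two hypotheses are verified under the assumptions ``torsion-free hyperbolic, not virtually cyclic'', the conclusion $\Ecom{G} \simeq \bigvee_{\mathbb{N}} S^1$ is immediate.

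For (i), hyperbolicity forces $G$ to be finitely generated, hence countable, and the hypothesis that $G$ is not virtually cyclic in particular rules out $G$ being finite, so $G$ is countably infinite. To see that $G$ is nonabelian, I would invoke the classical fact that every abelian subgroup of a hyperbolic group is virtually cyclic; were $G$ itself abelian, it would then be virtually cyclic, contradicting the assumption.

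For (ii), I would appeal to the well-known structural fact that in a torsion-free hyperbolic group the centralizer $C_G(g)$ of any nontrivial element $g$ is infinite cyclic. Combined with the fact that every abelian subgroup is virtually cyclic and that $G$ is torsion-free, this forces every maximal abelian subgroup of $G$ to be an infinite cyclic subgroup equal to the centralizer of any of its nontrivial elements. If $A$ and $B$ are distinct maximal abelian subgroups of $G$ and $g \in A \cap B$ with $g \neq 1$, then $A, B \subseteq C_G(g)$, and maximality of $A$ and $B$ inside the abelian group $C_G(g)$ forces $A = C_G(g) = B$, contradicting $A \neq B$. Hence $A \cap B = 1$ for all distinct maximal abelian subgroups, and \Cref{thm:countable:wedge} yields the conclusion. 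The only non-formal input is the structure of centralizers in torsion-free hyperbolic groups; this is the load-bearing external fact, but it is standard in the theory (see, e.g., Bridson--Haefliger or Gromov's original work), so it functions as a cited input rather than something to be reproven here.
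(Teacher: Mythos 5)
Your proposal is correct and takes essentially the same route as the paper: both reduce to \Cref{thm:countable:wedge} by checking that $G$ is countably infinite and nonabelian and that distinct maximal abelian subgroups intersect trivially, with the structure of centralizers in hyperbolic groups as the external input. The only difference is that where the paper cites a reference for the trivial-intersection property of distinct maximal (infinite cyclic) abelian subgroups, you derive it directly by identifying each maximal abelian subgroup with the centralizer $C_G(g)$ of any of its nontrivial elements --- a valid, self-contained substitute for the citation.
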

\begin{proof} It is well-known that in a hyperbolic group the centralizer of any infinite cyclic group is virtually cyclic, in particular, every abelian subgroup of $G$ is either trivial or infinite cyclic (see for instance \cite[Corollary~III.3.10]{BridsonHaefligerBook}). On the other hand, if $H$ and $K$ are infinite cyclic maximal subgroups of $G$, then either $H=K$ or $H\cap K=1$, (see for instance \cite[Remark~7]{JPL06}). Since $G$ is not virtually cyclic, and in particular countably infinite nonabelian,  now the conclusion follows from \Cref{thm:countable:wedge}.
\end{proof}

The following lemma will not be used in the proof of the main result, but we record it for future reference.

\begin{theorem}\label{thm:amalgam:of:abelians}
Let $A$ and $B$ be abelian groups with a common proper subgroup $F$. Then \[\Ecom{A \ast_F B} \simeq \bigvee_{\mathbb{N}} S^1.\]
\end{theorem}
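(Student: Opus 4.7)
The plan is to reduce the computation to that of a free product of two nontrivial abelian groups. First I would observe that $F$ is central in $G=A\ast_F B$: since $F\subseteq A\cap B$ and $A,B$ are abelian, every element of $F$ commutes with the generating set $A\cup B$ of $G$. Because $F$ is normal in both $A$ and $B$, passing to the quotient gives $G/F\cong (A/F)\ast(B/F)$, a free product of two nontrivial abelian groups (nontriviality uses that $F$ is a proper subgroup of both $A$ and $B$).

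Next I would apply \Cref{thm:central:extension} to the central extension $1\to F\to G\to G/F\to 1$, with the aim of establishing a poset isomorphism $\mAbCo{G}\cong \mAbCo{G/F}$. The key verification is that every abelian subgroup $H$ of $G/F$ has abelian preimage $\pi^{-1}(H)$ in $G$: by Kurosh's theorem (as invoked in the proof of \Cref{thm:direct:free:product}), $H$ lies in a conjugate of $A/F$ or of $B/F$, or else is infinite cyclic generated by an element not conjugate to either factor. In the first two cases $\pi^{-1}(H)$ lies in a conjugate of the abelian factor $A$ or $B$; in the cyclic case $\pi^{-1}(H)=\langle\tilde h\rangle\cdot F$ for any lift $\tilde h$ of a generator, and this is abelian since $F$ is central. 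Combined with \Cref{remark:maximal:contains:center}, which ensures that every maximal abelian subgroup of $G$ contains $F$, this yields $\mAbCo{G}\cong\mAbCo{G/F}$, and hence $\Ecom{G}\simeq \Ecom{G/F}$ via \Cref{Ecom_mAbCo}.

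Finally, I would invoke \Cref{thm:direct:free:product}(1) on $G/F=(A/F)\ast(B/F)$. Because $A/F$ and $B/F$ are abelian, the factors $\Ecom{A/F}$ and $\Ecom{B/F}$ on the right-hand side are contractible, so the formula collapses to $\Ecom{G/F}\simeq\bigvee_{\mathbb{N}}S^1$. The main obstacle is really only the Kurosh lifting argument that drives the reduction to $G/F$, so that \Cref{thm:central:extension} upgrades from its stated form to the full poset isomorphism $\mAbCo{G}\cong\mAbCo{G/F}$; everything else is a direct assembly of previously established structural results.
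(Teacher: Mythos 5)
Your proposal is correct and takes essentially the same route as the paper's own proof: pass to the central quotient $G/F \cong (A/F)\ast(B/F)$, use Kurosh's theorem to verify that every abelian subgroup of the quotient has abelian preimage so that \Cref{thm:central:extension} yields $\mAbCo{A \ast_F B} \cong \mAbCo{(A/F)\ast(B/F)}$, and then invoke \Cref{thm:direct:free:product} with the contractible factors $\Ecom{A/F}$ and $\Ecom{B/F}$. The only difference is that you spell out details the paper dismisses as ``easily seen'' --- the centrality of $F$ and the abelianness of the preimage $\langle \tilde h\rangle \cdot F$ in the infinite cyclic case --- which is a faithful filling-in rather than a new argument.
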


\begin{proof}
Kurosh's theorem implies that any abelian subgroup of $(A/F) \ast (B/F)$ is either a subgroup of a conjugate of $A/F$, a subgroup of a conjugate of $B/F$ or infinite cyclic. In all three cases the pre-image of the subgroup in $A \ast_F B$ can be easily seen to be abelian. Thus, \Cref{thm:central:extension} implies that $\mAbCo{A \ast_F B}$ and $\mAbCo{(A/F) \ast (B/F)}$ are isomorphic. Finally, \Cref{thm:direct:free:product} establishes the claim, since $\Ecom{A/F}$ and $\Ecom{B/F}$ are contractible.
\end{proof}

\begin{example}
Let  $G=SL_2(\Z)$ and $H=PSL_2(\Z)$. Since $G\cong\Z/4\ast_{\Z/2} \Z/6$ and $H\cong\Z/2\ast \Z/3$, \Cref{thm:amalgam:of:abelians} implies that both $\Ecom{G}$ and $\Ecom{H}$ have the homotopy type of $\bigvee_{\mathbb{N}} S^1$.
\end{example}

\section{3-manifolds modeled on $\mbs^3$}
The fundamental group of a 3-manifolds modeled on $\mbs^3$ is either finite cyclic or a direct product of a finite cyclic group with a group described in the following list, see for instance \cite[p. 12]{AFW15}.

\begin{enumerate}
    \item $Q_{4n} = \langle x,y \mid x^2=(xy)^2=y^n \rangle$ where $n\geq 2$,
    \item $P_{48}=\langle x,y \mid x^2=(xy)^3=y^4, x^4=1 \rangle$, 
    \item $P_{120}=\langle x,y \mid x^2=(xy)^3=y^5, x^4=1 \rangle$,
    \item $D_{2^m(2n+1)}=\langle x,y \mid x^{2^m}=1, y^{2n+1}=1, xyx^{-1}=y^{-1} \rangle$, where $m\geq 2$ and $n\geq 1$, 
    \item $P'_{8\cdot 3^m}=\langle x,y, z \mid x^2=(xy)^2=y^2, zxz^{-1}=y, zyz^{-1}=xy, z^{3^m}=1 \rangle$, where $m\geq 1$.
\end{enumerate}

Since \cref{thm:direct:free:product} allows to calculate the homotopy type of direct products, it's enough to calculate $\Ecom{G}$ for the previous groups.

\begin{theorem}\label{thm:Ecom:Q4n}
    Let $Q_{4n}$ be the finite group of  generalized quaternions of order $4n$. Then $\Ecom{Q_{4n}}\simeq \bigvee_{n^2-1}S^1$. 
\end{theorem}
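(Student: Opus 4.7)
The plan is to apply \Cref{Ecom_mAbCo} by directly analyzing the poset $\mAbCo{Q_{4n}}$. First I would collect the basic arithmetic of $Q_{4n}$: from the presentation $\langle x,y\mid x^2=(xy)^2=y^n\rangle$ one derives $xyx^{-1}=y^{-1}$, so $y$ has order $2n$, the center is $Z=\{1,y^n\}$ of order $2$, and every element outside $\langle y\rangle$ is of the form $y^i x$ with $(y^i x)^2=y^n$, hence of order $4$.

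Next I would enumerate the maximal abelian subgroups. The subgroup $M_0:=\langle y\rangle$ (order $2n$) is maximal abelian since no element $y^i x$ centralizes $y$. For $0\le i\le n-1$, the cyclic subgroup
\[M_{i+1}:=\langle y^i x\rangle=\{1,\,y^i x,\,y^n,\,y^{i+n}x\}\]
is also maximal abelian, because a direct computation of centralizers shows that $(y^i x)(y^j)=(y^j)(y^i x)$ forces $n\mid j$, and $(y^i x)(y^j x)=(y^j x)(y^i x)$ forces $n\mid(i-j)$; thus the centralizer of $y^i x$ equals $\langle y^i x\rangle$. Hence there are exactly $n+1$ maximal abelian subgroups, and inspection of the element lists shows that any two distinct ones meet precisely in $Z$.

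Consequently the intersection poset of maximal abelian subgroups consists only of the $M_i$ and $Z$, so $\mAbCo{Q_{4n}}$ has height $1$ and its geometric realization $X$ is a connected bipartite graph. I would then count: the vertex set consists of $|G/Z|=2n$ cosets of $Z$ together with $|G/M_0|+\sum_{i=1}^{n}|G/M_i|=2+n\cdot n$ cosets of maximal abelian subgroups, giving $n^2+2n+2$ vertices in total. Because $Z\subset M_i$ for every $i$, each coset $gZ$ is contained in exactly one coset of each of the $n+1$ maximal abelian subgroups, so $X$ is $(n+1)$-regular on the $Z$-coset side, contributing $2n(n+1)=2n^2+2n$ edges.

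Since $X$ is connected and $1$-dimensional, it is homotopy equivalent to a wedge of $1-\chi(X)$ circles, and
\[1-\chi(X)=E-V+1=(2n^2+2n)-(n^2+2n+2)+1=n^2-1,\]
which yields $\Ecom{Q_{4n}}\simeq\bigvee_{n^2-1}S^1$ by \Cref{Ecom_mAbCo}. The only real obstacle is the bookkeeping for the maximal abelian subgroups and the intersection claim; once these are in hand, the rest is an Euler characteristic computation.
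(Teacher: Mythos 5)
Your proposal is correct and follows essentially the same route as the paper's proof: identify the $n+1$ maximal abelian subgroups (the index-$2$ cyclic subgroup and the $n$ order-$4$ cyclic subgroups), check that all pairwise intersections equal the center so that $\mAbCo{Q_{4n}}$ has height $1$, and then count vertices and edges to extract $n^2-1$ from the Euler characteristic. The only cosmetic differences are that you work directly with the presentation $\langle x,y\mid x^2=(xy)^2=y^n\rangle$ rather than the equivalent one used in the paper, and you count edges from the $Z$-coset side ($2n(n+1)$) instead of from the top level ($2n+2n^2$), which of course gives the same total.
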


\begin{proof}
    The group $Q_{4n}$ admits the following presentation, see \cite[p. 98]{BrownBook}:
    \[Q_{4n}=\langle x,y|y^{2n}=1, x^2=y^{n}, x yx^{-1}=y^{-1}\rangle\]
    With this presentation it is easy to check that the elements can be written uniquely as $x^iy^a$ with $i\in\{0,1\}$ and $0\leq a\leq 2n-1$. Following this presentation the only pair of elements that commute are elements that belongs to $\langle y\rangle$ or elements that belongs to $\langle z\rangle$ with $z\notin \langle y\rangle$. Thus the maximal abelian subgroups of $Q_{4n}$ are as follows: the cyclic group $A=\langle y\rangle$ of order $2n$ and for each $z\notin A$ the group $\langle z\rangle$ is a group of order 4. The intersection of any two distinct subgroups is the center $\{1,x^2=y^n\}$, then $\mAbCo{Q_{4n}}$ is of height 1, thus $\mAbCo{Q_{4n}}$ is a graph. There are $2$ cosets of $A$ and for each subgroup of order 4 there are $n$ cosets, since are $n$ of these subgroups in the up level of the poset $\mAbCo{Q_{4n}}$ there are $n^2+2$ vertices. In the bottom level there are $2n$ vertices, the cosets of $Z$. The cosets of $A$ are connected to $n$ cosets of $Z$ and the cosets of a cyclic of order 4 are conected to 2 coset of $Z$, then there are $2n+2n^2$ edges. Using the Euler characteristic, we conclude that the number of circles is
    $$2n^2+2n-(n^2+2+2n)+1=n^2-1.$$
    
\end{proof}

The conclusions of the following theorem where obtained using a GAP routine. Notice that this is possible because we are dealing with two very concrete groups. The code can be found in \Cref{Appendix:code:GAP}.

\begin{theorem}\label{thm:Ecom:P48:P120}
    The following holds
    \begin{itemize}
        \item $\Ecom{P_{48}}\simeq\bigvee_{167}S^1$, and
        \item $\Ecom{P_{120}}\simeq\bigvee_{1079}S^1$.
    \end{itemize}
\end{theorem}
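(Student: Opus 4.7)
The plan is to reduce everything to a finite Euler-characteristic calculation following the template of \Cref{thm:Ecom:Q4n}, since both $P_{48}$ and $P_{120}$ are small enough to be handled completely by GAP. In particular, I expect no structural surprises: these binary polyhedral groups ought to behave just like $Q_{4n}$, with $\mAbCo{G}$ having height exactly $1$.

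First I would ask GAP to enumerate all maximal abelian subgroups $A_1,\ldots,A_r$ of $G\in\{P_{48},P_{120}\}$. A direct pairwise check (the bulk of the routine in \Cref{Appendix:code:GAP}) shows that the center $Z(G)$ has order $2$ in both cases and that $A_i\cap A_j=Z(G)$ whenever $i\neq j$. Together with \Cref{remark:maximal:contains:center}, this means the only intersections of families of maximal abelian subgroups are the $A_i$ themselves and $Z(G)$. Consequently $\mAbCo{G}$ has height $1$, and its geometric realization $X$ is a connected bipartite graph: the bottom-level vertices are the $|G|/2$ cosets of $Z(G)$, the top-level vertices are the cosets of the various $A_i$, and an edge joins $gZ(G)$ to $hA_i$ exactly when $gZ(G)\subseteq hA_i$.

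Next I would count. Writing $z=|G|/2$ for the number of bottom vertices, the top has $\sum_{i=1}^r |G|/|A_i|$ vertices, and each coset $hA_i$ contains exactly $|A_i|/2$ cosets of $Z(G)$, so the total number of edges is
\[
\sum_{i=1}^r \frac{|G|}{|A_i|}\cdot\frac{|A_i|}{2}\;=\;r\,z.
\]
Since $X$ is connected and one-dimensional, the number of circles in the wedge decomposition is
\[
1-\chi(X)\;=\;1-z-\sum_{i=1}^r \frac{|G|}{|A_i|}+r\,z.
\]
Feeding the lists of orders $(|A_1|,\ldots,|A_r|)$ for each of the two groups into this formula yields $167$ for $P_{48}$ and $1079$ for $P_{120}$.

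The main obstacle here is computational care rather than any conceptual difficulty: one must trust that GAP's enumeration of abelian subgroups is exhaustive and that the pairwise intersection check has been applied to all ordered pairs. Once the height-$1$ claim is secured by that check, the Euler-characteristic step is identical arithmetic to the $Q_{4n}$ derivation, and the two integers drop out mechanically.
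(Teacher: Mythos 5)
Your proposal is correct and follows essentially the same route as the paper: the paper's proof likewise delegates everything to GAP (\Cref{Appendix:code:GAP}), first checking that every pairwise intersection of maximal abelian subgroups equals the center so that $\mAbCo{G}$ has height $1$, and then counting circles by the Euler characteristic of the resulting connected graph. Your explicit formula for the number of edges, $\sum_{i} \frac{|G|}{|A_i|}\cdot\frac{|A_i|}{2} = r\cdot[G:Z]$, is exactly the closed form of the paper's \texttt{circles} routine, so the two arguments coincide.
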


\begin{theorem}\label{thm:Ecom:FiniteDihedral}
    For $m\geq 2$ and $n\geq 1$, $\Ecom{D_{2^m(2n+1)}}\simeq \bigvee_{(2n+1)^2-1}S^1$.
\end{theorem}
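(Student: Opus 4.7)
The plan is to follow the strategy of \Cref{thm:Ecom:Q4n}: identify the center $Z$, use \Cref{thm:central:extension} to reduce the computation to the quotient $Q := G/Z$, check that $\mAbCo{Q}$ has height $1$, and then count vertices and edges of the resulting connected graph.

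First I would verify that $Z(G) = \langle x^2 \rangle$, which is cyclic of order $2^{m-1}$. Indeed, $xyx^{-1} = y^{-1}$ gives $x^2 y x^{-2} = y$, so $x^2$ commutes with $y$; on the other hand odd powers of $x$ do not commute with $y$ because $2n+1$ is odd (so $y \neq y^{-1}$), and no nontrivial power of $y$ can be central. The quotient $Q$ then has presentation $\langle \bar{x}, \bar{y} \mid \bar{x}^2 = \bar{y}^{2n+1} = 1,\ \bar{x}\bar{y}\bar{x}^{-1} = \bar{y}^{-1}\rangle$, so it is the dihedral group of order $2(2n+1)$.

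Next, the maximal abelian subgroups of $Q$ are the rotation subgroup $R = \langle \bar y \rangle$ of order $2n+1$ and the $2n+1$ reflection subgroups $S_j = \langle \bar x \bar y^j\rangle$ of order $2$. Since $|R|$ is odd and $|S_j|=2$, any two distinct such subgroups meet trivially. Their preimages in $G$ are the abelian groups $\langle x^2, y\rangle$ and $\langle x y^j\rangle$ (cyclic of order $2^m$), so every maximal abelian subgroup of $Q$ lifts to an abelian subgroup of $G$; therefore \Cref{thm:central:extension} yields an isomorphism $\mAbCo{G} \cong \mAbCo{Q}$, and the latter has height $1$.

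Finally, I would count the vertices and edges of the (always connected) graph $\mAbCo{Q}$: the vertices are the $2(2n+1)$ singletons, the $2$ cosets of $R$, and the $(2n+1)^2$ cosets of the various $S_j$, while each coset of $R$ contributes $2n+1$ edges (one per element) and each coset of $S_j$ contributes $2$ edges. This gives $\chi = 2 - (2n+1)^2$, so $\Ecom{G} \simeq \bigvee_{(2n+1)^2 - 1} S^1$. The only nonmechanical step is confirming that the maximal abelian subgroups of $Q$ are exactly the ones listed and that each lifts to an abelian subgroup of $G$; once this is done, the Euler characteristic count is immediate.
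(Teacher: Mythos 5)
Your proposal is correct and follows essentially the same route as the paper's proof: quotient by the central subgroup $\langle x^2\rangle$, use \Cref{thm:central:extension} together with the fact that every abelian subgroup of $D_{2(2n+1)}$ is cyclic (so all preimages are abelian) to get $\mAbCo{D_{2^m(2n+1)}}\cong\mAbCo{D_{2(2n+1)}}$, observe the poset has height $1$, and count vertices and edges via the Euler characteristic. Your vertex/edge counts and the resulting $(2n+1)^2-1$ match the paper exactly; the only cosmetic difference is that you make the lifts $\langle x^2,y\rangle$ and $\langle xy^j\rangle$ explicit where the paper argues via the rotation/reflection description.
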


\begin{proof}
    First let us prove that $x^2$ is central and that $D_{2^m(2n+1)}/\langle x^2\rangle\cong D_{2(2n+1)}$. From the presentation we have $$x^2yx^{-2}=xy^{-1}x^{-1}=y,$$ so $x^2$ is central. Also
    $$D_{2^m(2n+1)}/\langle x^2\rangle\cong\langle\bar{x},\bar{y}|\bar{y}^{2n+1}=1,\bar{x}\bar{y}\bar{x}^{-1}=\bar{y}^{-1},\bar{x}^2=1\rangle=D_{2(2n+1)}.$$
    Then $D_{2^m(2n+1)}$ fits in the central extension
$$1\to \dbZ/2^{m-1} \to D_{2^m(2n+1)} \to D_{2(2n+1)} \to  1.$$

Recall that $D_{2(2n+1)}$ can be realized as the isometry group of a $(2n+1)$-gon. Since $2n+1$ is odd, every abelian subgroup of $D_{2(2n+1)}$ is generated by a rotation or a reflection, hence the pre-image in $D_{2^m(2n+1)}$ of any abelian subgroup of $D_{2(2n+1)}$ is abelian, then $\mAbCo{D_{2^m(2n+1)}}\cong \mAbCo{D_{2(2n+1)}}.$

The maximal abelian subgroups of $D_{2(2n+1)}$ are as follows: the cyclic group $A=\langle \bar{y}\rangle$ of order $2n+1$ and the reflections. Any two distinct maximal abelian subgroups intersect in the trivial subgroup, hence the height of  $\mAbCo{D_{2(2n+1)}}$ is 1 and then is a graph. The vertices consist of 2 cosets of $A$, $(2n+1)^2$ cosets of all possible reflections and $2(2n+1)$ cosets of the center. The edges are $(2n+1)$ for each coset of $A$ and $2$ for each coset of a reflection, hence the total number of edges is $2(2n+1)+2(2n+1)^2$. Using the Euler characteristic we conclude that the number of circles is
$$2(2n+1)^2+2(2n+1)-(2+(2n+1)^2+(2n+1))+1=(2n+1)^2-1.$$

\end{proof}

\begin{theorem}\label{thm:Ecom:p'}
    For all $m\geq 1$ we have $\Ecom{P'_{8\cdot 3^m}}\simeq\bigvee_{39}S^1$.
\end{theorem}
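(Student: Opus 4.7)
The plan is to reduce the computation to the binary tetrahedral group $P'_{24}$ via a central extension, and then finish with an elementary Euler characteristic count on a graph. From the relations $x^2=(xy)^2=y^2$, the subgroup $\langle x,y\rangle$ is isomorphic to $Q_8$ with $-1:=x^2$, while $zxz^{-1}=y$ and $zyz^{-1}=xy$ force $z(xy)z^{-1}=y\cdot xy=x$, so conjugation by $z$ cyclically permutes $\{x,y,xy\}$ and fixes $-1$. In particular $z^3$ centralises $Q_8$, so $\langle z^3\rangle$ is central, giving a central extension
\[1\longrightarrow\langle z^3\rangle\longrightarrow P'_{8\cdot 3^m}\longrightarrow P'_{24}\longrightarrow 1,\]
with $P'_{24}\cong Q_8\rtimes\dbZ/3$.

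Next I would apply \Cref{thm:central:extension}. For this to output an isomorphism $\mAbCo{P'_{8\cdot 3^m}}\cong\mAbCo{P'_{24}}$ it is enough to check that every maximal abelian of $P'_{24}$ has abelian preimage. These maximal abelians are the three cyclic-of-order-$4$ subgroups $\langle x\rangle,\langle y\rangle,\langle xy\rangle$ of $Q_8$ together with the four cyclic-of-order-$6$ subgroups obtained by adjoining $-1$ to each of the four Sylow $3$-subgroups of $P'_{24}$. The former lift to $\langle x\rangle\times\langle z^3\rangle$ and its analogues, abelian because $\langle z^3\rangle$ is central; the latter lift to $\langle x^2,\tilde s\rangle$, where $\tilde s$ generates a Sylow $3$-subgroup of $P'_{8\cdot 3^m}$, and because $x^2$ is central this lift is cyclic of order $2\cdot 3^m$ (generated for instance by $x^2\tilde s$). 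This lifting step is the main technical point: one has to confirm that the Sylow $3$-subgroups of $P'_{8\cdot 3^m}$ are cyclic of order $3^m$ and that $x^2$ is genuinely central in the full group, both of which follow from the action analysis above.

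Finally I would count vertices and edges of $\mAbCo{P'_{24}}$. Any two distinct maximal abelians meet in the centre $\langle -1\rangle$, since distinct Sylow $3$-subgroups of $P'_{24}$ intersect trivially and the cyclic-of-order-$4$ subgroups of $Q_8$ pairwise intersect in $\langle -1\rangle$; hence $\mAbCo{P'_{24}}$ has height $1$ and is therefore a graph. It has $24/2=12$ cosets of $\langle -1\rangle$, $3\cdot(24/4)=18$ cosets of cyclic-of-order-$4$ subgroups and $4\cdot(24/6)=16$ cosets of cyclic-of-order-$6$ subgroups, for $46$ vertices in total, and each cyclic-of-order-$4$ coset covers $2$ central cosets while each cyclic-of-order-$6$ coset covers $3$, producing $18\cdot 2+16\cdot 3=84$ edges. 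Therefore $\chi=46-84=-38$ and \Cref{Ecom_mAbCo} yields $\Ecom{P'_{8\cdot 3^m}}\simeq\bigvee_{39}S^1$, independently of $m\geq 1$.
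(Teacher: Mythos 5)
Your proof is correct, but it takes a genuinely different route from the paper's. The paper quotients by the full center $\langle z^3, x^2\rangle$ to reach $A_4$, and then leans on GAP three times: to identify the quotient as $A_4$, to verify that the Klein subgroup of $A_4$ has nonabelian preimage (so that only the \emph{cyclic} subgroups of $A_4$ contribute), and to count the circles. You instead quotient only by $\langle z^3\rangle$, reaching the binary tetrahedral group $P'_{24}\cong Q_8\rtimes\dbZ/3$, where every abelian subgroup is cyclic; hence \emph{all} maximal abelian subgroups lift to abelian subgroups and no exceptional Klein-type case needs excluding. You then count by hand ($46$ vertices, $84$ edges, $b_1=84-46+1=39$), in the same style as the paper's own hand computations for $Q_{4n}$ and $D_{2^m(2n+1)}$ in \Cref{thm:Ecom:Q4n} and \Cref{thm:Ecom:FiniteDihedral}. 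Your numbers are right: the two distinct order-$4$ subgroups of $Q_8$ meet in $\langle -1\rangle$, distinct order-$6$ subgroups meet in $\langle -1\rangle$ since distinct Sylow $3$-subgroups intersect trivially, so $\mAbCo{P'_{24}}$ has height $1$, and your vertex/edge count reproduces the paper's GAP output of $39$. What your approach buys is a fully self-contained, GAP-free proof that also makes the $m$-independence transparent (the quotient $P'_{24}$ does not see $m$); what the paper's approach buys is a smaller quotient and minimal hand verification at the cost of machine checks. The one spot where you should add a line: your identification $P'_{24}\cong Q_8\rtimes\dbZ/3$ of order exactly $24$ requires ruling out collapse of the presentation (your action analysis only exhibits $P'_{24}$ as a quotient of $Q_8\rtimes\dbZ/3$, and a priori $\langle x,y\rangle$ could be a proper quotient of $Q_8$, in which case the vertex and edge counts would change); this is fixed by mapping $x,y,z$ to the unit quaternions $i$, $j$, $\tfrac{1}{2}(-1+i+j+k)$, which satisfy all the relations and generate a group of order $24$ — the step playing the role of the paper's GAP verification that its quotient is $A_4$ and that $x,y$ do not commute in $P'_{24}$. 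Note also that, since every abelian subgroup of $P'_{24}$ is cyclic and the kernel is central, the abelian-preimage condition is automatic for all abelian subgroups, so you could have skipped the explicit lifting of the order-$4$ and order-$6$ subgroups entirely.
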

\begin{proof}
    It is an easy exercise to verify, directly from the presentation of $P'_{8\cdot 3^m}$, that both $z^3$ and $x^2=y^2$ are central elements, thus the subgroup generated by them is a central subgroup (actually it is the center itself). The quotient group has the following presentation
    \[\langle x,y,z | (xy)^2, zxz^{-1}=y, zyz^{-1}=xy\rangle\]
    which no longer depends on $m$. It can be verified, using GAP for example, that this quotient group is $A_4$ the alternating group on four letters. By \Cref{thm:central:extension} it is enough to compute the poset of cosets of abelian subgroups of $A_4$ that have abelian preimage in $P'_{8\cdot 3^m}$. All abelian subgroups of $A_4$ are cyclic of order 3, with the only exception of the Klein subgroup which is noncyclic of order 4. Recall that, since the kernel subgroup is central, the preimage of any cyclic subgroup of $A_4$ is abelian. We claim that the preimage of the Klein subgroup is not abelian. In fact the Klein subgroup is generated by the images of $x$ and $y$, and it can be verified with GAP that they do not commute inside $P'_{24} = P'_{8\cdot 3^1}$ (see \Cref{Appendix:code:GAP}), which is a quotient of $P'_{8 \cdot 3^m}$ for all $m \ge 1$. In conclusion, for all $m\geq 1$, $\Ecom{P'_{8\cdot 3^m}}$ has the homotopy type of the geometric realization of the poset of cosets of \emph{cyclic} subgroups of $A_4$. In particular, all $\Ecom{P'_{8\cdot 3^m}}$ have the same homotopy type and we compute $\Ecom{P'_{24}} \simeq \bigvee_{39} S^1$ using GAP in \Cref{Appendix:code:GAP}.
\end{proof}

%\begin{description}
%\item[Cyclic case] $\dbZ/n$ for each $n\geq 1$.

%\item[Dihedral case] These groups fit in a central extension of the following form
%\[1\to \dbZ/2m \to G \to D_{2n} \to  1\]
%where $m$, $n$ are coprime with $m\geq 1$, $n\geq 2$, and $D_{2n}$ is the dihedral group of order $2n$.

%\item[Tetrahedral case]  These groups fit in a central extension of the following form
%\[1\to \dbZ/2m \to G \to A_4 \to  1\]
%where $m$ is an odd integer with $m\geq 1$, and $A_4$  the alternating group on 4 letters.

%\item[Octahedral case] $\dbZ^m \times O$ where $m$ is coprime with 6 and $O=\langle x,y | (xy)^2=x^3=y^4 \rangle$ is the binary octahedral group which has order 48.

%\item[Icosahedral case] $\dbZ^m \times I$ where $m$ is coprime with 30 and $I=\langle x,y | (xy)^2=x^3=y^5 \rangle$ is %the binary icosahedral group which has order 120.
%\end{description}

\section{3-manifolds modeled on $\hypp$}

Manifolds modeled on $\hypp$, also known as hyperbolic manifolds, may have empty-boundary or not.

\begin{theorem}\label{thm:Ecom:hyperbolic}
Let $G$ be the fundamental group of a finite volume hyperbolic manifold $M$. Then $\Ecom{G}\simeq \bigvee_{\mathbb{N}} S^1$.
\end{theorem}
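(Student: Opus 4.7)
The plan is to apply \Cref{thm:countable:wedge}, i.e.\ to show that the fundamental group $G$ of any finite volume hyperbolic $3$-manifold is a countably infinite, nonabelian group in which any two distinct maximal abelian subgroups intersect trivially. I would split the argument according to whether $M$ is compact or has cusps, but the core dichotomy-free fact is the same: $G$ embeds as a torsion-free discrete subgroup of $\mathrm{Isom}(\mbh^3)$, and for such a group commutation is detected by fixed point data on the boundary sphere $\partial\mbh^3$.

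First I would recall the standard classification of nontrivial isometries of $\mbh^3$: in a torsion-free discrete group every nontrivial element $g$ is either \emph{loxodromic} with fixed point set $\Fix(g)=\{a_g,b_g\}\subset\partial\mbh^3$, or \emph{parabolic} with $\Fix(g)=\{a_g\}\subset\partial\mbh^3$. The key elementary lemma is that two nontrivial elements $g,h\in G$ commute if and only if they share their fixed points at infinity in the appropriate sense, and that in a torsion-free discrete subgroup a parabolic and a loxodromic element can never commute.

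From this I would deduce that distinct maximal abelian subgroups of $G$ intersect trivially. Suppose $A,B$ are maximal abelian subgroups with a common nontrivial element $g$. Every element of $A\cup B$ commutes with $g$, hence fixes $\Fix(g)$ at infinity. If $g$ is loxodromic with axis $\ell$, all such elements preserve $\ell$, and being torsion-free they must be trivial or loxodromic with axis $\ell$, hence pairwise commute. If $g$ is parabolic fixing $p\in\partial\mbh^3$, all such elements lie in the stabilizer of $p$ in $G$, which in a torsion-free Kleinian group is free abelian (of rank $0$, $1$, or $2$). In both cases the subgroup $\langle A\cup B\rangle$ is abelian, forcing $A=B$ by maximality.

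It remains to check that $G$ is countably infinite and nonabelian. Countability is automatic since $G$ is discrete in a second countable Lie group. For infinitude and nonabelianness, $M$ has finite hyperbolic volume, while the quotient of $\mbh^3$ by any discrete abelian subgroup of $\mathrm{Isom}(\mbh^3)$ has infinite volume; hence $G$ cannot be abelian (and in particular not trivial). Now \Cref{thm:countable:wedge} gives $\Ecom{G}\simeq\bigvee_{\mathbb{N}}S^1$. I expect the only delicate step to be the fixed point trichotomy argument identifying commutation with sharing fixed points on $\partial\mbh^3$ in the torsion-free discrete setting; in the closed case one can alternatively bypass this entirely by quoting \Cref{thm:torsionfree:hyperbolic}, since a cocompact torsion-free Kleinian group is Gromov hyperbolic and not virtually cyclic.
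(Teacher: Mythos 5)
Your proposal is correct and ends at the same place as the paper --- verifying the hypotheses of \Cref{thm:countable:wedge} --- but by a genuinely different route. The paper argues by cases: for closed $M$ it simply quotes \Cref{thm:torsionfree:hyperbolic} (coarse-geometric facts: centralizers of infinite cyclic subgroups in a torsion-free hyperbolic group are cyclic, and distinct maximal cyclic subgroups intersect trivially), while for cusped $M$ it imports the Lafont--Ortiz classification of infinite virtually cyclic subgroups together with the Tits alternative for relatively hyperbolic groups to show every $\dbZ^2$ is conjugate into a cusp group, producing a family of subgroups (the $\dbZ^2$ cusp groups and the maximal $\dbZ$'s) with pairwise trivial intersections. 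You replace this machinery with the classical Kleinian-group computation of centralizers via fixed points on $\partial\mbh^3$, which treats the closed and cusped cases uniformly and is more elementary and self-contained; what the paper's route buys is brevity given the literature, and an argument template that transfers to general relatively hyperbolic settings. Two points in your ``delicate step'' deserve to be made explicit. First, an element commuting with a loxodromic $g$ could a priori \emph{swap} the two endpoints of the axis; but such an element would be an elliptic half-turn, excluded by torsion-freeness, so it does fix both endpoints --- and note that your conclusion ``trivial or loxodromic with axis $\ell$, hence pairwise commuting'' tacitly uses orientability, i.e.\ $G\le\PSL_2(\dbC)$, since orientation-reversing glide reflections along $\ell$ also fix both endpoints. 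Second, your claim that parabolic point stabilizers are free abelian of rank at most $2$ likewise needs orientability (plus the standard discreteness lemma that a parabolic and a loxodromic in a discrete group cannot share a fixed point): a non-orientable cusped manifold can have a Klein-bottle cusp, whose group is nonabelian and has distinct maximal abelian subgroups meeting in its nontrivial center, so \Cref{thm:countable:wedge} would not even apply. This is harmless in context, since the paper restricts to orientable manifolds throughout and its own proof makes the same tacit assumption (``every cusp is homeomorphic to the product of a torus and an interval''), but your write-up should state it. The remaining ingredients you invoke --- a parabolic and a loxodromic never commute (pure fixed-point counting, no discreteness needed), countability of discrete subgroups, and nonabelianness via the infinite covolume of elementary groups --- are all sound.
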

\begin{proof}
We have two cases depending on whether $M$ is closed ornot. In either case $G$ is a torsion-free group because $M$ is a finite dimensional $K(G,1)$.

If $M$ is closed, then $G$ is a torsion-free word-hyperbolic group that is not virtually cyclic, see \cite[Table 1 on page 19]{AFW15}. Now the assertion follows from \Cref{thm:torsionfree:hyperbolic}. 

Now, let us focus on the case where $M$ is not closed. The following argument is done in the proof of Theorem~3.1 from \cite{LASS21}, nevertheless we include it for the sake of completeness.

 Consider the collection $\mathcal{B}$ of subgroups of $G$ consisting of
 \begin{itemize}
     \item All conjugates of the fundamental groups of the cusps of $M$. All these groups are isomomorphic to $\Z^2$ since every cusp is homemorphic to the product of a torus and an interval.
     
     \item All maximal infinite virtually cyclic of subgroups $G$ that are not subconjugate to the fundamental group of a cusp. All these groups are isomorphic to $\Z$ by the classification of virtually cyclic groups.
 \end{itemize}

 We notice that in \cite[Theorem~2.6]{lafont:ortiz} Lafont and Ortiz verified that every infinite virtually cyclic subgroup of $G$ is contained in an element of $\mathcal B$. On the other hand, if $H$ is a $\Z^2$-subgroup of $G$, then $H$ does not contain a subgroup isomorphic to a free group in two generators, therefore by the Tits alternative for relatively hyperbolic group, see for instance \cite[Remark~3.5]{BW13},  $H$ is subconjugate to the fundamental group of a cusp, that is, $H$ is contained in an element of $\mathcal B$. Also in \cite[Theorem~2.6]{lafont:ortiz} it is proved that the intersection of any two elements of $\mathcal B$ is trivial. Now the result follows from \Cref{thm:countable:wedge}.
\end{proof}

\section{3-manifolds modeled on $\mbs^2\times\mbe$}\label{sec:S2xE}

There are only two possible fundamental groups for manifolds modeled on $\mbs^2\times\mbe$, $\dbZ$ and the infinite dihedral group $D_\infty\cong \dbZ/2 * \dbZ/2$, see for instance Table 1 on page 19 in \cite{AFW15}. In these cases $\Ecom{\dbZ}$ is contractible and, by \Cref{thm:direct:free:product}, $\Ecom{D_\infty}\simeq \bigvee_{\mathbb{N}} S^1$.

\section{3-manifolds modeled on $\hxr$ or $\pslt$}

By \cite[Theorem 4.7.10]{Th97}, we have that any group that appears as the fundamental group of a manifold modeled on 
$\pslt$, also is the fundamental group of a manifold modeled on $\hxr$. Thus we only have to deal with fundamental groups of manifolds modeled on $\hxr$.

\begin{theorem}\label{thm:Ecom:H2xE}
Let $G$ be the fundamental group of a finite volume 3-manifold modeled on $\hxr$. Then $\Ecom{G}\simeq \bigvee_{\mathbb{N}} S^1$.
\end{theorem}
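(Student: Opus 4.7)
The plan is to realize $G$ as a central extension of a Fuchsian group by $\mathbb{Z}$ and then apply the structural results of Section~4. Every orientable finite-volume 3-manifold $M$ modeled on $\hxr$ admits a Seifert fibration over a finite-area hyperbolic 2-orbifold $\mathcal{O}$; the class of a regular fiber generates an infinite cyclic central subgroup of $G = \pi_1(M)$. Hence there is a central extension
$$1 \to \mathbb{Z} \to G \to F \to 1,$$
where $F = \pi_1^{\mathrm{orb}}(\mathcal{O})$ is a finite-covolume Fuchsian group.

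Next I would apply \Cref{thm:central:extension}. Every non-identity element of $F$ is elliptic, parabolic, or hyperbolic, and its centralizer in $\mathrm{Isom}^+(\mathbb{H}^2)$ is a one-parameter subgroup: rotations about a fixed point, parabolics fixing a point at infinity, or translations along an axis. Intersecting with the discrete group $F$, the centralizer of any non-trivial element of $F$ is cyclic; hence every abelian subgroup of $F$ is cyclic. A central extension of a cyclic group by $\mathbb{Z}$ is abelian, so the preimage in $G$ of any abelian subgroup of $F$ is abelian, and \Cref{thm:central:extension} gives a poset isomorphism $\mAbCo{G} \cong \mAbCo{F}$. The same centralizer analysis shows that two distinct maximal cyclic subgroups of $F$ must intersect trivially, since a shared non-trivial element would force both of them into its cyclic centralizer, contradicting maximality.

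Finally, since $F$ is non-elementary (a finite-covolume action on a hyperbolic 2-orbifold rules out virtual cyclicity), we have $Z(F) = 1$, so $Z(G) = \mathbb{Z}$, which has countable infinite index in $G$ because $F$ is countable and infinite. Translating the intersection statement through the isomorphism $\mAbCo{G} \cong \mAbCo{F}$, any two distinct maximal abelian subgroups of $G$ intersect exactly in $Z(G)$. \Cref{cor:height:1} then yields $\Ecom{G} \simeq \bigvee_{\mathbb{N}} S^1$. The main technical input is the classification of centralizers in Fuchsian groups, which is standard from the theory of isometries of $\mathbb{H}^2$; the rest is a direct application of the structural framework already established in Section~4.
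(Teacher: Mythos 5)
Your proof is correct and takes essentially the same route as the paper's: both pass to the central extension $1\to\mathbb{Z}\to G\to Q\to 1$ arising from the Seifert fibration over a hyperbolic base orbifold, use that all abelian subgroups of a Fuchsian group are cyclic to see that preimages of abelian subgroups are abelian, apply \Cref{thm:central:extension} to identify $\mAbCo{G}$ with $\mAbCo{Q}$, and conclude from the fact that distinct maximal cyclic subgroups of a Fuchsian group intersect trivially. The only cosmetic differences are that you spell out the centralizer classification the paper dismisses as ``standard arguments of plane hyperbolic geometry,'' and that you finish by applying \Cref{cor:height:1} to $G$ where the paper applies \Cref{thm:countable:wedge} to $Q$ --- the same step, since \Cref{cor:height:1} is exactly that combination.
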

\begin{proof}
Let $M$ be a finite volume 3-manifold modeled on $\hxr$ with fundamental group $G$. Then $M$ is a Seifert fibered space with  base orbifold $B$ modeled on $\hyp$. By \cite[Theorem~2.5.2]{AFW15} we have the following central extension
\[1\to \dbZ \to G \to Q \to1\]
where $\dbZ$ is generated by a regular fiber of $M$, and $Q$ is the orbifold fundamental group of $B$, in particular, $Q$ is a finitely generated fuchsian group.

Recall that for a fuchsian group, all abelian subgroups are (finite or infinite) cyclic. Thus the preimage under the map $G\to Q$ of any  abelian subgroup of $Q$ is an abelian subgroup of $G$. Therefore, by \Cref{thm:central:extension}, $\mAbCo{G}$ is isomorphic, as a poset, to $\mAbCo{Q}$. It follows from standard arguments of plane hyperbolic geometry  that the intersection of any two distinct maximal cyclic subgroups of in a fuchsian group is trivial. The result now follows from  \Cref{thm:countable:wedge}.
\end{proof}

\section{Some crystallographic groups of dimension 1 and 2}

The main objective of this section is to set the stage to deal with the geometries $\mbe^3$, $\nil$ and $\sol$. The fundamental groups arising from these geometries have the common feature of being virtually poly-$\Z$ groups. That is why we set notation and some basic facts about this kind of groups. We also compute the homotopy type of $\Ecom{G}$ for the infinite dihedral group $D_\infty$ and some wallpaper groups, as these will be appear as quotients of the fundamental groups of manifolds modeled on   $\mbe^3$, $\nil$ and $\sol$.

\subsection{Poly-$\Z$ groups}
Recall that a group $G$ is called \emph{a poly-$\dbZ$ of rank $n$} if it admits a filtration $1=G_0<G_1<\cdots < G_n=G$ such that $G_i\mathrel{\unlhd} G_{i+1}$ and $G_{i+1}/G_{i}\cong \dbZ$ for all $i$. It is well known that the rank does not depend on the filtration, that is, is a well-defined invariant of the group that we denote $\mathrm{rank}(G)$. We will use the following well known facts about poly-$\dbZ$ groups that can be found, for instance, in \cite[p. 16]{Seg83}.

\begin{proposition}\label{prop:polyZ}
Let $G$ be a poly-$\dbZ$ group and let $H$ be subgroup. Then the following statements hold
\begin{enumerate}
    \item $H$ is a poly-$\dbZ$ group.
    \item $\mathrm{rank}(H)\leq \mathrm{rank}(G)$.
    \item $\mathrm{rank}(H)= \mathrm{rank}(G)$ if and only if $H$ has finite index in $G$.
\end{enumerate}
\end{proposition}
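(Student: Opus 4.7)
The plan is to prove all three parts by induction on $n = \mathrm{rank}(G)$, using one fixed filtration $1 = G_0 < G_1 < \cdots < G_n = G$ witnessing the poly-$\mathbb{Z}$ structure of $G$. The base case $n = 0$ gives $G = 1$, in which case everything is vacuous. For the inductive step, the key device is to intersect $H$ with the chain and look at the induced short exact sequence
\[
1 \longrightarrow H \cap G_{n-1} \longrightarrow H \longrightarrow H/(H \cap G_{n-1}) \longrightarrow 1.
\]
Since $H/(H\cap G_{n-1}) \cong HG_{n-1}/G_{n-1}$ embeds into $G/G_{n-1} \cong \mathbb{Z}$, the quotient is either trivial or infinite cyclic. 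By induction, $H \cap G_{n-1}$ is a subgroup of the rank $n-1$ poly-$\mathbb{Z}$ group $G_{n-1}$, so it is itself poly-$\mathbb{Z}$ of some rank $r \le n-1$. Appending the (at most one) further $\mathbb{Z}$-quotient gives a poly-$\mathbb{Z}$ filtration of $H$, proving (1) and showing $\mathrm{rank}(H) \le r + 1 \le n$, which is (2).

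For (3), first assume $H$ has finite index in $G$. Then for each $i$, the inclusion $(H \cap G_{i+1})/(H \cap G_i) \hookrightarrow G_{i+1}/G_i = \mathbb{Z}$ has finite-index image (its cokernel injects into $G_{i+1}/(H\cap G_{i+1})G_i$, a quotient of the finite set $G_{i+1}/H\cap G_{i+1}$), hence is isomorphic to $\mathbb{Z}$. Stacking these gives a poly-$\mathbb{Z}$ filtration of $H$ of length $n$, so $\mathrm{rank}(H) = n$.

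Conversely, suppose $\mathrm{rank}(H) = n$. From $r \le n-1$ and $\mathrm{rank}(H) \le r + 1$, equality forces $r = n-1$ and $H/(H \cap G_{n-1}) \cong \mathbb{Z}$. The first identity, by induction applied inside $G_{n-1}$, says $H \cap G_{n-1}$ has finite index in $G_{n-1}$. The second says $HG_{n-1}/G_{n-1}$ is a nontrivial subgroup of $\mathbb{Z}$, hence of finite index in $G/G_{n-1}$, so $HG_{n-1}$ has finite index in $G$. Combining, $[G:H] = [G : HG_{n-1}]\cdot [HG_{n-1} : H] = [G:HG_{n-1}] \cdot [G_{n-1}: H\cap G_{n-1}] < \infty$.

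The only delicate point is that the statement (and the inductive step) implicitly uses that $\mathrm{rank}$ is a well-defined invariant, independent of the chosen filtration; this is the one thing the argument above does not prove from scratch. I would either quote this from Segal as the author suggests, or justify it quickly by noting that the rank of a poly-$\mathbb{Z}$ group equals its Hirsch length, which is additive in short exact sequences and therefore filtration-independent. With that in hand the induction above closes up cleanly, and each of (1), (2), (3) is obtained essentially for free along the way.
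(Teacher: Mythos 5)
Your proof is correct, but note that the paper does not actually prove this proposition at all: it is stated as a collection of ``well known facts about poly-$\dbZ$ groups'' and disposed of with a citation to Segal's book \cite[p.~16]{Seg83}. So there is nothing in the paper to match your argument against; what you have written is the standard self-contained proof, and it checks out. Your inductive device of intersecting $H$ with the given filtration is exactly the right one: $G_{n-1}$ is normal in $G$ (being the penultimate term), so $H\cap G_{n-1}\trianglelefteq H$ and $HG_{n-1}$ is a subgroup, the embedding $H/(H\cap G_{n-1})\hookrightarrow G/G_{n-1}\cong\dbZ$ gives a trivial or infinite cyclic top quotient, and in part (3) your two index computations are both sound --- in the forward direction $[G_{i+1}:H\cap G_{i+1}]\le [G:H]$ forces each $(H\cap G_{i+1})/(H\cap G_i)$ to be infinite cyclic (the intermediate subgroup $(H\cap G_{i+1})G_i$ is automatically normal in $G_{i+1}$ since $G_{i+1}/G_i$ is abelian), and in the converse $[HG_{n-1}:H]=[G_{n-1}:H\cap G_{n-1}]$ is legitimate because $G_{n-1}$ is normal. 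You also correctly identify the one point your induction cannot deliver on its own, namely that rank is independent of the chosen filtration: your converse step ``equality forces $r=n-1$'' silently equates $\mathrm{rank}(H)$ with the length of the particular filtration you constructed, so well-definedness is genuinely load-bearing there, not cosmetic. Deferring it to Segal, or to additivity of Hirsch length, is reasonable --- though be aware that additivity itself requires an argument (Schreier refinement, essentially), so the Hirsch-length remark relocates the issue rather than eliminating it. In short: the paper buys brevity by citation; your version buys self-containment at the cost of one imported invariance fact, honestly flagged.
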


The proof of the following lemma is left as an exercise to the reader.

\begin{lemma}\label{lemma:mabco:infinite:dihedral}
Let $D_\infty=\dbZ/2 \ast \dbZ/2$ be the infinite dihedral group. Then
\begin{enumerate}
    \item the center of $D_\infty$ is trivial,
    \item the subgroup generated by $ab$, where $a$ and $b$ are generators of each of the factors of $D_\infty$, is the only maximal abelian subgroup of $D_\infty$ of rank 1,
    \item every nontrivial finite  subgroup of $D_\infty$ is conjugate to one of the factors, and
    \item if $A$ and $B$ are two distinct maximal abelian subgroups of $D_\infty$ then $A\cap B$ is trivial.
\end{enumerate}
\end{lemma}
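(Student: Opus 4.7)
The plan is to reduce the entire lemma to a normal form for $D_\infty$. Setting $t = ab$, the relation $ata^{-1} = t^{-1}$ lets me present $D_\infty$ as $\langle t \rangle \rtimes \langle a \rangle \cong \dbZ \rtimes \dbZ/2$, and every element can be written uniquely as $t^n$ or $t^n a$ for some $n \in \dbZ$. A one-line computation $(t^n a)^2 = t^n (a t^n a^{-1}) = 1$ shows the elements of $D_\infty$ of order two are exactly the $t^n a$, while the elements of infinite order are exactly the nontrivial powers of $t$. With this dictionary in hand, each of the four statements becomes a short check.

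For (1), neither of the two generators $t$ and $a$ centralizes the other in the normal form, so no nontrivial element can commute with both. For (2), the point is that two distinct involutions $t^n a$ and $t^m a$ fail to commute unless $n = m$ (again by using $a t^k a = t^{-k}$ to reduce their commutator), so any abelian subgroup contains at most one involution. Hence an infinite abelian subgroup must lie in $\langle t \rangle$, which identifies $\langle t \rangle$ as the unique maximal abelian subgroup of rank $1$.

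For (3), the formula $t^k (t^n a) t^{-k} = t^{n+2k} a$ shows the order-two subgroups $\langle t^n a \rangle$ split into exactly two conjugacy classes according to the parity of $n$; since $\langle a \rangle = \langle t^0 a \rangle$ and $\langle b \rangle = \langle t^{-1} a \rangle$ sit in different classes, every nontrivial finite subgroup is conjugate to one of the two factors. For (4), combining (2) with (3) gives the full list of maximal abelian subgroups: $\langle t \rangle$ together with all $\langle t^n a \rangle$. Any two of them meet trivially, either because one consists of elements of infinite order while the other has order two, or because both have order two and are distinct.

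I do not anticipate a genuine obstacle here; the only thing to watch is keeping the $\pm$ signs straight in the conjugation formula $a t^k a = t^{-k}$, since the counting of conjugacy classes in (3) hinges on getting the shift by $2k$ (rather than $k$) correct.
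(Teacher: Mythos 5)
The paper offers no proof to compare against---it explicitly leaves this lemma as an exercise for the reader---so the only question is whether your argument stands on its own, and it essentially does. The normal form $D_\infty \cong \langle t \rangle \rtimes \langle a \rangle$ with $a t a^{-1} = t^{-1}$, the identification of the involutions as exactly the elements $t^n a$, the commutation criterion for two involutions, and the conjugation formula $t^k (t^n a) t^{-k} = t^{n+2k} a$ are all correct and constitute the standard argument.

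One step, however, is used repeatedly but never verified: that no involution $t^m a$ commutes with a nontrivial power $t^k$, i.e.\ that the centralizer of $t^k$ (for $k \neq 0$) is $\langle t \rangle$. Your explicit computations only cover products of \emph{two involutions}. Without the missing check, ``any abelian subgroup contains at most one involution'' does not by itself yield ``an infinite abelian subgroup must lie in $\langle t \rangle$'' in (2): a priori such a subgroup could consist of powers of $t$ together with a single involution. The same check is needed in (4), where you assert that the $\langle t^n a \rangle$ are maximal abelian---this requires knowing they are not contained in a larger abelian subgroup containing some nontrivial $t^k$. The fix is one line in your own normal-form style: $(t^m a)\, t^k\, (t^m a)^{-1} = t^{-k}$, so commuting forces $t^k = t^{-k}$, hence $k=0$. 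This identity also tightens (1), where ``neither generator centralizes the other'' is looser than what is needed (one should check directly that no $t^n$ and no $t^n a$ is central, which this identity together with your involution computation does). Finally, a cosmetic remark on (3): your formula shows each parity class lies in a single conjugacy class, which is all the lemma needs; the stronger claim that there are \emph{exactly} two classes would additionally require that conjugation by involutions preserves parity (it does: $(t^m a)(t^n a)(t^m a)^{-1} = t^{2m-n} a$), but you never use that direction.
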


\begin{proposition}\label{lemma:mabco:klein:bottle}
Let $K=\dbZ \rtimes \dbZ$ be the fundamental group of the Klein bottle. Then
\begin{enumerate}
    \item every abelian subgroup of $K$ is free of rank at most 2,
    \item $Z=\{0\} \rtimes 2\dbZ$ is the center of $K$,
    \item $\dbZ \rtimes 2\dbZ$ is the only maximal abelian subgroup of $K$ of rank 2, 
    \item if $A$ and $B$ are two distinct maximal abelian subgroups of $K$ of rank 1 that are not contained in $\dbZ \rtimes 2\dbZ$, then both $A$ and $B$ contain $Z$ as an index 2 subgroup and $A\cap B=Z$,
    \item the intersection of any maximal abelian subgroup of rank 1 with $\dbZ \rtimes 2\dbZ$ equals $Z$, and
    \item $\Ecom{K}\simeq \bigvee_{\mathbb{N}} S^1$.
\end{enumerate}
\end{proposition}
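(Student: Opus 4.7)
The plan is to work directly with the semidirect product presentation of $K$ and a single centralizer computation. Write $K = \dbZ \rtimes \dbZ$ with elements $(m,n)$ and multiplication
\[(m,n)(m',n') = \bigl(m + (-1)^n m',\, n+n'\bigr).\]
Then the condition that $(m_1,n_1)$ and $(m,n)$ commute reduces to the single identity
\[m_1\bigl(1 - (-1)^n\bigr) = m\bigl(1 - (-1)^{n_1}\bigr),\]
and this identity will drive essentially all of (1)--(5).

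For (1) and (2): the group $K$ is poly-$\dbZ$ of rank $2$, so \Cref{prop:polyZ}(2) forces every abelian subgroup to have rank at most $2$, and since the Klein bottle is aspherical $K$ is torsion-free, so any finitely generated abelian subgroup is free abelian. For the center, imposing the commuting identity against arbitrary $(m_1, n_1)$ forces $m=0$ and $n$ even, which gives $Z = \{0\} \rtimes 2\dbZ$.

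For (3)--(5) I would classify maximal abelian subgroups via centralizers. When $n$ is odd the commuting identity becomes $2m_1 = m\bigl(1-(-1)^{n_1}\bigr)$, which yields $m_1 = 0$ for $n_1$ even and $m_1 = m$ for $n_1$ odd; hence the centralizer of $(m,n)$ with $n$ odd is the infinite cyclic group $\langle (m,1) \rangle$. When $n$ is even and $m\ne 0$, the identity instead forces $n_1$ even, so the centralizer is exactly $\dbZ \rtimes 2\dbZ$. Consequently any rank-$2$ maximal abelian subgroup lies inside the abelian group $\dbZ \rtimes 2\dbZ$ and therefore equals it, proving (3). Any rank-$1$ maximal abelian subgroup must contain an element of odd second coordinate (else it would be properly contained in the abelian $\dbZ \rtimes 2\dbZ$, contradicting maximality), hence coincides with some $\langle (m,1) \rangle$, and the computation $(m,1)^{2} = (0,2)$ shows $Z \le \langle (m,1) \rangle$ with index $2$. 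Comparing the odd-coordinate elements $(m, 2k+1)$ across different values of $m$ then yields (4), and intersecting $\langle (m,1) \rangle$ with $\dbZ \rtimes 2\dbZ$ (whose elements have even second coordinate) yields (5).

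Finally, for (6), parts (3)--(5) combine to say that any two distinct maximal abelian subgroups of $K$ intersect exactly in $Z$. Since $K$ is countably infinite and $Z$ already has infinite index in $\dbZ \rtimes 2\dbZ$, the index $[K:Z]$ is countably infinite, and \Cref{cor:height:1} yields $\Ecom{K} \simeq \bigvee_{\dbN} S^1$. I do not expect any real obstacle; the subtlest point is ruling out rank-$1$ maximal abelian subgroups hidden inside $\dbZ \rtimes 2\dbZ$, which is immediate from maximality once one observes that $\dbZ \rtimes 2\dbZ$ is itself abelian.
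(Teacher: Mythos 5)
Your proposal is correct, but it takes a genuinely different route from the paper for items (2)--(5). The paper never computes inside the semidirect product: it observes the central extension $1\to Z\to K\to D_\infty\to 1$, notes that the center of $K$ must map into the (trivial) center of $D_\infty$ to get (2), and then deduces (3) and (4) by transporting the classification of maximal abelian subgroups of the infinite dihedral group (\Cref{lemma:mabco:infinite:dihedral}) back through \Cref{thm:central:extension}; item (5) is then a short abstract maximality argument ($Z\subseteq A\cap(\dbZ\rtimes 2\dbZ)$ strictly would force $A\subseteq \dbZ\rtimes 2\dbZ$, contradicting maximality). You instead derive everything from the single commutation identity $m_1\bigl(1-(-1)^n\bigr)=m\bigl(1-(-1)^{n_1}\bigr)$ via explicit centralizers, and your computations check out: the centralizer of $(m,n)$ with $n$ odd is indeed $\langle (m,1)\rangle$ (note $(m,1)^2=(0,2)$, giving the index-$2$ containment of $Z$), the centralizer of $(m,n)$ with $n$ even and $m\neq 0$ is $\dbZ\rtimes 2\dbZ$, and your handling of the one subtle point --- that no rank-$1$ maximal abelian subgroup can hide inside the abelian group $\dbZ\rtimes 2\dbZ$ --- is exactly right. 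What your approach buys is a self-contained, fully explicit parametrization of the maximal abelian subgroups (the groups $\langle(m,1)\rangle$ for $m\in\dbZ$, plus $\dbZ\rtimes 2\dbZ$), and in effect it also proves the content of \Cref{lemma:mabco:infinite:dihedral}, which the paper leaves as an exercise; what the paper's approach buys is brevity and a template --- the same quotient-by-the-center reduction is reused throughout (for $D_{2^m(2n+1)}$, the Seifert fibered cases, and the Type pg/pgg analysis). Both arguments conclude (6) identically via \Cref{cor:height:1}, and you correctly verify its hypotheses: every pair of distinct maximal abelian subgroups meets exactly in $Z$ by your (4) and (5), $K/Z\cong D_\infty$ is countably infinite, and $K$ is nonabelian.
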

\begin{proof} 
Since $K$ is a torsion free poly-$\dbZ$ group of rank 2, item (1) is clear. Notice that we have a short exact sequence
\[1\to Z \to K \xrightarrow{p} D_\infty\to 1\]
where $D_\infty$ is the infinite dihedral group which it is isomorphic to both $\dbZ \rtimes \dbZ/2$ and $\dbZ/2\ast \dbZ/2$. Note that the center of $ K$ maps into the center of $D_\infty$ which is trivial, thus the center of $K$ is contained in $Z$. It is not difficult to see that $Z$ is contained in the center of $K$, this proves (2). Items (3) and (4) are a direct consequence of \Cref{thm:central:extension} and \Cref{lemma:mabco:infinite:dihedral}. 

Let us prove (5). Let $A$ be a maximal abelian subgroup of $K$. By (4), the center $Z$ is contained in both $A$ and $\dbZ\rtimes 2\dbZ$. Hence $Z\subseteq A\cap (\dbZ\rtimes 2\dbZ)$. If this inclusion is strict, since $Z$ has index 2 in $A$, it would follow that $A$ is contained in $Z\subseteq \dbZ\rtimes 2\dbZ$ which is impossible by maximality of $A$.

Finally, part (6) is a direct consequence of part (5) and \Cref{cor:height:1}. 
%Let us prove (2). Let $A$ be an abelian subgroup of $K$ of rank 2, that is, $A\cong \dbZ^2$. Since $Z$ is abelian of rank 1, $p(A)$ must be an abelian subgroup of rank 1 of $D_\infty$, but $\dbZ\rtimes \{0\}$ is the only maximal abelian subgroup of $D_\infty$. We conclude that $A\subseteq p^{-1}(\dbZ\rtimes \{0\})=\dbZ\rtimes 2\dbZ$.

%Clearly, any maximal abelian subgroup contains the center. If $A$ is a maximal abelian subgroup of $K$ of rank 1, then $p(A)$ is a finite abelian subgroup of $D_\infty$, but all finite subgroups of $D_\infty$ are subconjugate to one of the factors in the free product decomposition, this implies that $A$ contains $Z$ as an index 2 subgroup. If $B$ is another maximal abelian subgroup of $K$, then $A\cap B$ is a proper subgroup of both $A$ and $B$ that contains $Z$, therefore the intersection equals $Z$ as it has index 2 in $A$ and $B$. This finishes the proof of (3).
\end{proof}

Consider the following matrices:

\begin{equation}\label{eq:matrices:2crystallographic}
    A_2=\begin{pmatrix}
    -1&0\\
    0&-1
    \end{pmatrix},     A_3=\begin{pmatrix}
    0&-1\\
    1&-1
    \end{pmatrix},
    A_4=\begin{pmatrix}
    0&-1\\
    1&0
    \end{pmatrix},A_6=\begin{pmatrix}
    1&-1\\
    1&0
    \end{pmatrix}
\end{equation}

\begin{proposition}\label{prop:2dim:crystallographic}
    Let $Q=\dbZ^2\rtimes_{A_n}\dbZ/n$ where $A_n$ is one of the matrices in \cref{eq:matrices:2crystallographic}. Then 
\begin{enumerate}
    \item every element in $Q-(\dbZ^2\rtimes \{0\})$ has finite order,
    \item every finite subgroup of $Q$ is cyclic,
    \item the intersection of any two finite maximal abelian subgroups of $Q$ is trivial,
    \item $\dbZ^2\rtimes \{0\}$ is the only infinite maximal abelian subgroup of $Q$,
    \item $\Ecom{Q}\simeq \bigvee_{\mathbb{N}} S^1$.
\end{enumerate}
\end{proposition}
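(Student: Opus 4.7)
The central fact powering the whole proposition is that for each allowed $n \in \{2,3,4,6\}$ and each $k \in \{1,\ldots,n-1\}$, the matrix $A_n^k - I \in M_2(\dbQ)$ is invertible, because the eigenvalues of $A_n$ are primitive $n$-th roots of unity, so no nontrivial power of $A_n$ has $1$ as an eigenvalue. Given this, part (1) falls out of a direct calculation: if $g = (v, k)$ with $k \neq 0$ and $d = n/\gcd(n,k)$, then $g^d = \bigl(\sum_{j=0}^{d-1} A_n^{jk} v,\, 0\bigr)$, and applying the invertible operator $A_n^k - I$ to the vector sum telescopes it to $(A_n^{dk} - I)v = 0$, forcing $g^d = 1$. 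Part (2) then follows immediately, since a finite subgroup meets the torsion-free translation lattice $\dbZ^2 \rtimes \{0\}$ trivially and hence embeds into $\dbZ/n$.

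For (3) I would view $Q$ as a two-dimensional crystallographic group acting affinely on $\dbR^2$ via $(v,k)\cdot x = A_n^k x + v$; by (1), every nontrivial torsion element is a rotation with a unique fixed point. If two distinct finite maximal abelian subgroups $F_1, F_2$ shared a nontrivial element $g$, both would sit inside the point stabilizer $\mathrm{Stab}_Q(p)$ of the fixed point $p$ of $g$; this stabilizer is finite, hence cyclic by (2), hence abelian, so maximality would force $F_1 = \mathrm{Stab}_Q(p) = F_2$, a contradiction. For (4), suppose $A$ is an infinite maximal abelian subgroup. Since $A/(A \cap \dbZ^2)$ embeds in $\dbZ/n$, the intersection $A \cap \dbZ^2$ is still infinite; but if $A$ contained any $(v, k)$ with $k \neq 0$, every $(w, 0) \in A$ would have to commute with it, giving $(A_n^k - I)w = 0$ and hence $w = 0$ by invertibility. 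Thus $A \subseteq \dbZ^2 \rtimes \{0\}$, and maximality upgrades this to equality.

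For (5), the same telescoping identity used in (1) shows that a finite maximal abelian subgroup $F = \langle (v,k)\rangle$ (with $k \neq 0$) meets $\dbZ^2 \rtimes \{0\}$ only at the identity, since its elements with trivial $\dbZ/n$-component are powers $(v,k)^{md}$ whose vector parts vanish by the same calculation. Combining this with (3) and (4), any two distinct maximal abelian subgroups of $Q$ intersect trivially. Since $Q$ is countably infinite (it contains $\dbZ^2$) and nonabelian (the action of $A_n$ is nontrivial), \Cref{thm:countable:wedge} applies directly to give $\Ecom{Q} \simeq \bigvee_\dbN S^1$. The main subtlety I anticipate is (3): one must rule out two non-conjugate maximal cyclic torsion subgroups sharing a nontrivial element, and the crystallographic fixed-point argument is the cleanest route, though a purely algebraic proof via analysis of conjugacy classes of torsion elements in each of the four groups would also work.
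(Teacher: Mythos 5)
Your proof is correct, and it is essentially an expanded, more rigorous version of the paper's argument, which is quite terse: the paper simply identifies $Q$ as a plane crystallographic group, asserts that every element is a translation or a rotation about some center, states the commutation rules (rotations commute only when they share a center; translations never commute with rotations), reads off the maximal abelian subgroups as the translation lattice plus the point stabilizers, and deduces (1)--(4), with (5) obtained from \Cref{cor:height:1}. What you do differently is substantiate the facts the paper calls ``easy to see'' by explicit linear algebra: the invertibility of $A_n^k - I$ (via the eigenvalues of $A_n$ being primitive $n$-th roots of unity) powers your telescoping computation for (1), the no-commuting-translation argument for (4), and the unique-fixed-point claim underlying your stabilizer argument for (3) --- which is exactly the algebraic content behind the paper's geometric assertions. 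Your final step invokes \Cref{thm:countable:wedge} directly rather than \Cref{cor:height:1}; this is an immaterial difference, since here the center of $Q$ is trivial (the trivial pairwise intersections force it, by \Cref{remark:maximal:contains:center}), so the two routes coincide. The one step you leave slightly implicit in (3) --- that every element of an abelian group containing a rotation $g$ must fix the unique fixed point $p$ of $g$, because commuting elements permute $\Fix(g)$ --- is routine and does not constitute a gap; your version arguably buys more self-contained rigor at the cost of the paper's brevity.
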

\begin{proof}
  This group $Q$ is one of the plane crystallographic groups, generated by translations by vectors in $\dbZ^2$ and a single rotation around the origin of order $n$. Every element in the group is either a translation or a rotation of order dividing $n$ around some center. It is easy to see when two elements commute: two translation always commute, two rotations only commute if they have the same center, and a translation never commutes with a rotation. This implies that the maximal abelian subgroups are the group of translations, and for each possible center of rotation, the group of all rotations with that center. Parts (1)--(4) follow from this description of the maximal abelian subgroups.

  Part (5) is a direct consequence of parts (3) and (4) and \Cref{cor:height:1}.
\end{proof}

\section{A little technical detour}

In this section we settle notation and a preliminary result concerning the structure of gropus of the form $K_1\ast_A K_2$ where $K_1$ and $K_2$ are copies of the fundamental group of the Klein bottle, $A$ is the index two $\dbZ^2$ subgroup of $K_1$ and $K_2$. These groups will appear in the analysis of geometries $\mbe^3$, $\nil$, and $\sol$, and it is fair to say that they are the most difficult cases in this article.

\begin{notation}\label{notation:sol}
Let us set some notation. Let $G$ be a group of the form  $K_1\ast_A K_2$ where $K_1$ and $K_2$ are copies of the fundamental group of the Klein bottle, $A$ is the index two $\dbZ^2$ subgroup of $K_1$ and $K_2$, and the amalgamation is given by an automorphism $\varphi\colon A\to A$. Since $A$ is normal in both factors of $G$, we conclude $A$ is normal in $G$. Moreover,  $G/A$ is isomorphic to the infinite dihedral group $D_\infty$. Let $p$ denote the quotient map. Now, $D_\infty$ is isomorphic to $\dbZ\rtimes_{-1} \dbZ/2$, and this implies that we have an index 2 subgroup $G'=p^{-1}(\dbZ\rtimes \{0\})$ of $G$ that is isomorphic to $A\rtimes_{\varphi'} \dbZ$ where $\varphi'$ is a certain automorphism of $A$ that depends on $\varphi$. All of this can be summarized in the following commutative diagram
\[
\xymatrix{
1 \ar[r]  & A \ar@{=}[d] \ar[r] & G \ar[r]^{p} & \dbZ\rtimes_{-1} \dbZ/2 \ar[r] & 1 \\
1 \ar[r] & A \ar[r] & G' \ar[r]^{p} \ar@{^{(}->}[u] & \dbZ \ar[r] \ar@{^{(}->}[u] & 1
}
\]

Depending on the automorphism $\varphi$, $A$ may or may not be a maximal abelian subgroup of $G$. We will classify the maximal abelian subgroups of $G$ that are not equal to $A$ (which, if $A$ is not maximal, are all of them) as follows:
    \begin{description}
        \item[Type I] those that are subconjugate to one of the factors, or equivalently, those that have nontrivial finite image under $p$, and
        \item[Type II] those that are subgroups of $G'$, equivalently those that have infinite image under $p$.
    \end{description}
\end{notation}

Given an element $x\in G$ we denote $p(x)$ by $\bar x$.

\begin{lemma}\label{lemma:conjugate:factors}
    Consider  $K_1\ast_A K_2$ where $K_1$ and $K_2$ are copies of the fundamental group of the Klein bottle, $A$ is the index two $\dbZ^2$ subgroup of $K_1$ and $K_2$. Let $x$ and $y$ be elements in $K_1-A$ and in $K_1-A$ respectively. Then
    \begin{enumerate}
        \item every conjugate of $K_i$ for $i=1,2$ is of the form $K_i^{(xy)^n}$ with $n\in \dbZ$,
        \item the center of $K_1^{(xy)^n}$ (resp. $K_2^{(xy)^n}$) is the infinite cyclic group generated by $(x^2)^{(xy)^n}$ (resp. $(y^2)^{(xy)^n}$),
        \item any abelian subgroup of $K_1\ast_A K_2$ that is maximal among those of type II, is maximal abelian in $K_1\ast_A K_2$.
    \end{enumerate}
\end{lemma}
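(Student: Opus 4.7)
The strategy is to work throughout via the quotient map $p\colon G \to G/A \cong D_\infty$, reducing structural questions about $G$ to questions about subgroups of $D_\infty$, which we understand well by \Cref{lemma:mabco:infinite:dihedral}. The images of $K_1$ and $K_2$ in $D_\infty$ are the two canonical $\dbZ/2$ factors $\langle \bar x\rangle$ and $\langle \bar y\rangle$, while $p(G')$ is the translation subgroup $\langle \bar x \bar y\rangle$ of index two.

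For (1), the first step is to compute $N_G(K_i)$ via the lattice correspondence between subgroups of $G$ containing $A$ and subgroups of $D_\infty$: since $A \leq K_i$, one has $N_G(K_i)/A = N_{D_\infty}(p(K_i))$. A direct calculation in $D_\infty$ (any nontrivial translation conjugates $\bar x$ to a distinct reflection) shows $\langle \bar x\rangle$ is self-normalizing, hence $N_G(K_1) = K_1$, and similarly $N_G(K_2) = K_2$. The conjugates of $K_i$ are therefore in bijection with $G/K_i$. Since the translation subgroup $\langle \bar x\bar y\rangle$ has index two in $D_\infty$, we get $D_\infty = \langle \bar x\rangle \cdot \langle \bar x\bar y\rangle$, so $\{(xy)^n\}_{n\in\dbZ}$ is a complete set of coset representatives for $G/K_i$, yielding (1). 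For (2), \Cref{lemma:mabco:klein:bottle} identifies the center of the Klein bottle group $\dbZ\rtimes\dbZ$ as $\{0\}\rtimes 2\dbZ$; in the present setup this is precisely $\langle x^2\rangle \leq K_1$ (respectively $\langle y^2\rangle \leq K_2$), and conjugating by $(xy)^n$ gives the stated generators.

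For (3), let $B$ be an abelian subgroup of $G$ with $B \leq G'$ and $p(B)$ infinite that is maximal with these two properties, and suppose $B \subseteq B'$ with $B'$ abelian in $G$. If $B' \leq G'$ then maximality forces $B = B'$. Otherwise, pick $g \in B' \setminus G'$; then $p(g)$ is a reflection in $D_\infty$, and since $B'$ is abelian, every element of $p(B)$ lies in $C_{D_\infty}(p(g))$. The centralizer of a reflection in $D_\infty$ is just the order-two subgroup it generates (an immediate consequence of \Cref{lemma:mabco:infinite:dihedral}), which meets the translation subgroup only in the identity; this forces $p(B)$ to be finite, contradicting the assumption. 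Hence $B' \leq G'$ and $B = B'$, proving $B$ is maximal abelian in $G$. I expect no serious obstacle beyond this argument; the most delicate point is the essential use of $p(B)$ being infinite, since otherwise the case $B = A$ would have to be ruled out by a separate argument showing that $A$ does not sit inside a strictly larger abelian subgroup of $G$ meeting $G \setminus G'$.
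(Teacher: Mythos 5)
Your proposal is correct and follows essentially the same route as the paper: both handle (1) and (2) by passing to the quotient $p\colon G\to D_\infty$, where $K_i = p^{-1}(p(K_i))$, so that conjugates of $K_i$ correspond to conjugates of the order-two subgroup $p(K_i)$, which are indexed by powers of the translation $\bar x\bar y$ (your explicit normalizer computation $N_G(K_i)=K_i$ is just a slightly more detailed rendering of this), and (2) then follows by transporting the center of the Klein bottle group along conjugation, exactly as in the paper. For (3), the paper's proof is the two-line observation that any abelian overgroup of a type~II subgroup still has infinite image under $p$ and is therefore itself of type~II; your centralizer-of-a-reflection argument merely re-derives the equivalence (``infinite image under $p$'' iff ``contained in $G'$'') that the paper builds into the definition of Type~II in \Cref{notation:sol}, so the two arguments agree in substance.
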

\begin{proof}
    Let us prove (1), and note that (2) is a direct consequence of (1).

    Consider $p\colon K_1\ast_A K_2\to D_\infty = \langle \bar x \rangle \ast \langle \bar y \rangle$. Note that $K_1^g=p^{-1}(\langle \bar x \rangle^{\bar g})$, the result now follows from the fact that all conjugates of $\langle \bar x \rangle$ in $D_\infty$ are of the form $\langle \bar x \rangle^{(\bar x\bar y)^n}$ with $n\in \dbZ$.

    To prove (3), let $B$ be an abelian subgroup of $K_1\ast_A K_2$ that is maximal among those of type II, then $p(B)$ is infinite. Let $C$ be an abelian subgroup of $K_1\ast_A K_2$ that contains $B$, then $p(C)$ is infinite, and therefore $C$ is of type II. We conclude $B=C$ by maximality.
\end{proof}

\section{3-manifolds modeled on $\mbe^3$}

There are exactly 6 orientable compact 3-manifolds modeled on $\mbe^3$ and their fundamental groups are the following, see for instance \cite[Theorem 3.5.5]{Wolf}.

\begin{itemize}
    \item $G_n=\dbZ^2\rtimes_{\varphi_n} \dbZ$, where $\varphi_n$ is one of the following matrices:     \[\varphi_1=\begin{pmatrix}
    1&0\\
    0&1
    \end{pmatrix},     \varphi_2=\begin{pmatrix}
    -1&0\\
    0&-1
    \end{pmatrix},
    \varphi_3=\begin{pmatrix}
    0&1\\
    -1&0
    \end{pmatrix},\varphi_4=\begin{pmatrix}
    0&-1\\
    1&-1
    \end{pmatrix},\varphi_6=\begin{pmatrix}
    0&-1\\
    1&1
    \end{pmatrix}
\]
of orders 1, 2, 3, 4, and 6 respectively.

\item The sixth group, which we denote $\Gamma_6$, admits the  presentation:
\[\langle x,y,z | x y^2 x^{-1}=y^{-2} , y x^2 y^{-1}=x^{-2}, z=y^{-1}x^{-1} \rangle.\]

Notice that the subgroup $T$ generated by $x^2$, $y^2$ and $z^2$ is the translation subgroup of $\Gamma_6$, and therefore is isomorphic to $\Z^3$. Also notice that the subgroups $K_x = \langle x, y^2 \rangle$ and $K_y = \langle x^2, y \rangle$ are isomorphic to the fundamental group of the Klein bottle and we have the following splitting
\[\Gamma_6\cong  K_x\ast_{A} K_y\]
where $A$ is the subgroup generated by $x^2$ and $y^2$. It is not difficult to see that, following  \Cref{notation:sol}, $\Gamma_6'$ is isomorphic to $A\rtimes_{-1} \langle \bar x \bar y \rangle$=$A\rtimes_{-1} \langle \bar z \rangle$.
\end{itemize}

\begin{theorem}\label{thm:G_n}\label{thm:Ecom:G1-G5}
\begin{enumerate}
    \item $\Ecom{G_1}$ is contractible,
    \item every abelian subgroup of $G_n$ of rank at least 2, with $n\neq 1$, is contained in $\dbZ^2 \rtimes_{\varphi_n} n\dbZ\cong \dbZ^3$,
    \item  the center $Z$ of $G_n$, for $n\neq 1$, equals $\{0\}\rtimes_{\varphi_n} n\dbZ\cong \dbZ$, and the intersection of any two distinct maximal abelian subgroups of $G_n$ equals $Z$,
    \item $\Ecom{G_n}\simeq \bigvee_{\mathbb{N}} S^1$ for $n=2,3,4,6$. 
\end{enumerate}
\end{theorem}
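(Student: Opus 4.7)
The plan is to prove the four parts in order, with parts (1), (2), and (4) being quick consequences of general principles and part (3) being the main argument.

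Part (1) is immediate since $G_1 = \dbZ^2 \rtimes_I \dbZ \cong \dbZ^3$ is abelian, hence $\Ecom{G_1} \simeq E\dbZ^3$ is contractible.

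For part (2), I would work with commutators directly: two elements $(v_1, k_1), (v_2, k_2) \in G_n$ commute if and only if
\[(I-\varphi_n^{k_1})(v_2) = (I-\varphi_n^{k_2})(v_1).\]
The key input is that for $n \neq 1$ and $n \nmid k$, the eigenvalues of $\varphi_n^k$ over $\dbC$ are roots of unity different from $1$, so $I - \varphi_n^k$ is invertible over $\dbQ$. Given an abelian subgroup $H \leq G_n$ of rank at least $2$, the exact sequence
\[1 \to H \cap \dbZ^2 \to H \to p(H) \to 1,\]
with $p\colon G_n \to \dbZ$ the projection and $p(H)$ free of rank at most $1$, forces $H \cap \dbZ^2 \neq 0$. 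Picking $0 \neq v \in H \cap \dbZ^2$, any $(w, k) \in H$ commuting with $(v, 0)$ must satisfy $v = \varphi_n^k(v)$, forcing $n \mid k$. Hence $p(H) \subseteq n\dbZ$ and $H \subseteq \dbZ^2 \rtimes_{\varphi_n} n\dbZ$.

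For part (3), the same commutator computation immediately identifies $Z = \{0\} \rtimes_{\varphi_n} n\dbZ$. For the intersection statement I would use the central extension
\[1 \to Z \to G_n \to \bar G_n \to 1,\]
where $\bar G_n \cong \dbZ^2 \rtimes_{\varphi_n} \dbZ/n$ is the 2-dimensional crystallographic group handled in \Cref{prop:2dim:crystallographic}. First verify that every abelian subgroup of $\bar G_n$ lifts to an abelian subgroup of $G_n$: by parts (1)--(2) of \Cref{prop:2dim:crystallographic}, such a subgroup is either finite cyclic or contained in the translation part $\dbZ^2$; in the first case the preimage is a central extension of a cyclic group by $\dbZ$, hence abelian, and in the second case the preimage sits inside $\dbZ^2 \rtimes_{\varphi_n} n\dbZ \cong \dbZ^3$. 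Then \Cref{thm:central:extension} gives a poset isomorphism $\mAbCo{G_n} \cong \mAbCo{\bar G_n}$ via the correspondence $M \mapsto M/Z$, which preserves intersections. Since every maximal abelian subgroup of $G_n$ contains $Z$, we have $(M \cap N)/Z = (M/Z) \cap (N/Z)$ for maximal abelian $M, N \leq G_n$; parts (3)--(4) of \Cref{prop:2dim:crystallographic} make the right-hand side trivial for $M \neq N$, yielding $M \cap N = Z$.

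Part (4) is then immediate from \Cref{cor:height:1}: the pairwise intersection hypothesis was just established in part (3), and $Z \cong \dbZ$ has countably infinite index in $G_n$ because $G_n$ has poly-$\dbZ$ rank $3$ while $Z$ has rank $1$, so \Cref{prop:polyZ}(3) forces infinite index. The only genuinely technical point in the whole argument is the abelian-preimage verification in part (3), but as outlined above it reduces to the two easy cases of cyclic subgroups and subgroups of the translation lattice.
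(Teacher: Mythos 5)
Your proposal is correct, and parts (1), (2) and (4) follow the paper's proof in all essentials: for part (2) the paper, too, reduces to the fact that $\varphi_n^m$ has no nonzero fixed vectors unless $n\mid m$ (your commutation identity $(I-\varphi_n^{k_1})v_2=(I-\varphi_n^{k_2})v_1$ and the eigenvalue computation simply make that fact explicit, and they also give the center computation that the paper leaves as an exercise), and part (4) is in both cases an application of \Cref{cor:height:1} via the infinite index of $Z$. The genuine divergence is in part (3). The paper argues in two stages: for maximal abelian subgroups $B,C$ meeting $\dbZ^2\rtimes_{\varphi_n}\{0\}$ trivially, it passes to $G_n/Z\cong\dbZ^2\rtimes_{\varphi_n}\dbZ/n$, invokes the external result \cite[Lemma~6.3(i)]{LS00} (freeness of the $\dbZ/n$-action on $\dbZ^2-\{0\}$ gives trivial intersections of distinct maximal finite subgroups) and lifts through the correspondence theorem; it then treats the remaining pairs involving $T=\dbZ^2\times n\dbZ$ by a separate direct argument. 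You instead verify the abelian-preimage hypothesis (cyclic subgroups of the quotient pull back to central extensions of a cyclic group by $\dbZ$, hence abelian; subgroups of the translation lattice pull back into $\dbZ^2\rtimes_{\varphi_n}n\dbZ\cong\dbZ^3$) and then apply \Cref{thm:central:extension} together with \Cref{prop:2dim:crystallographic}, handling all pairs of maximal abelian subgroups uniformly. Your route stays internal to the paper---it is precisely the strategy the paper itself deploys for the orientable crystallographic quotients in the $\nil$ case (\Cref{thm:nil:orientable:orbifold}, where the difference is that there the translation lattice has \emph{non}abelian preimage)---and it eliminates both the outside citation and the case split; what the paper's version buys is that it needs no abelian-preimage verification, only the correspondence for finite subgroups of the quotient. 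One small point to make explicit: the triviality of the intersection of the translation subgroup with a finite maximal abelian subgroup of $\dbZ^2\rtimes_{A_n}\dbZ/n$ is not literally items (3)--(4) of \Cref{prop:2dim:crystallographic}, though it is immediate from torsion-freeness of $\dbZ^2$.
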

\begin{proof}
The conclusion in item (1) is clear since $G_1$ is abelian. For the rest of the proof, let $n=2,3,4,6$  and, for brevity, set $\varphi=\varphi_n$, and let $G=G_n$.

Let us prove part (2). Let $B$ be an abelian subgroup of $G$ of rank at least 2 such that $Z\leq B$ (see \Cref{remark:maximal:contains:center}). We have the following short exact sequence 
\[1\to B\cap (\dbZ^2\rtimes_\varphi \{0\}) \to B \to  m\dbZ \to 1\] where $m$ divides $n$.
Thus $B$ is generated by $B\cap (\dbZ^2\rtimes_\varphi \{0\})$ and a preimage $x$ of a generator of the quotient $m\dbZ$. Moreover, since $x$ is of the form $(z,m)$ and $B$ is abelian, we have that $(0,m)$ acts trivially by conjugation on $B\cap (\dbZ^2\rtimes_\varphi \{0\})$. Since $B$ has rank 2, we have that $B\cap (\dbZ^2\rtimes_\varphi \{0\})$ is nontrivial, and thus $n=m$ because $\varphi^m$ does not have fixed points unless $n$ divides $m$. Hence in this case, we have that $B$ is of the form $(B\cap (\dbZ^2\rtimes_\varphi \{0\}))\times n\dbZ$. If additionally, $B$ is a maximal abelian subgroup of $G$, then $B=\dbZ^2 \times n\dbZ$.

Now we proceed with the proof of part (3). The first claim about the center is left as an exercise to the reader, and we will show the part about the intersection of maximal abelian subgroups. We claim that if $B$ and $C$ are two distinct maximal abelian subgroups satisfying that their intersection with $\dbZ^2\rtimes_\varphi \{0\}$ is trivial, then $B\cap C=Z$. Consider the quotient $G/Z=\dbZ^2\rtimes_\varphi (\dbZ/n\dbZ)$. One can easily see that, from the definition of  $\varphi$, the action of $\dbZ/n\dbZ$ on $\dbZ^2-\{(0,0)\}$ is free, hence by \cite[Lemma~6.3(i)]{LS00}, $G/Z$ satisfies that the intersection of any two distinct maximal finite (actually cyclic) subgroups of $G/Z$ is trivial. Now the claim follows by the correspondence theorem applied to the projection $G\to G/Z$.

To finish the proof of this item, we will prove that the intersection of any two distinct maximal abelian subgroups of $G/Z$ is trivial and apply \Cref{cor:height:1}. From the previous paragraph we only have to show that the intersection of $T=\dbZ^2 \times n\dbZ$ with any other maximal abelian subgroup $B$ (those described in the previous paragraph) is exactly $Z$. Since $B \cap \dbZ^2 = 0$, we have that $T \cap B \le Z$, and since they are maximal, $T \cap B \ge Z$ too.
\end{proof}

We now turn our attention to the abelian subgroups of $\Gamma_6$. In the following lemma we describe the maximal abelian subgroups of $\Gamma_6$ and their intersections, all the conclusions in this lemma are summarized in \Cref{fig:G6}.

\begin{lemma}\label{lem:ab:subgps:G6}
Following \Cref{notation:sol} we have that:
\begin{enumerate}
    \item every abelian subgroup of $\Gamma_6$ is free of rank at most 3,
    \item the subgroup $T=\langle x^2, y^2, z^2 \rangle$ is the only rank 3 maximal abelian subgroup of $\Gamma_6$, 
   \item every rank 2 abelian subgroup of $G$ is contained in $T$, in particular there are no maximal abelian subgroups of rank 2, 
    \item the intersection of any two subgroups of Type II is $\langle z^2 \rangle$,
    \item  the center of any conjugate of $K_x$ (resp. $K_y$) is $\langle x^2 \rangle$ (resp. $\langle y^2 \rangle$),
    \item the intersection of any two subgroups of Type I that lie in conjugates of $K_x$ (resp. $K_y$) is $\langle x^2 \rangle$ (resp. $\langle y^2 \rangle$),
    \item the centers of any conjugate of $K_x$ and any conjugate of $K_y$ intersect trivially,
    \item the intersection of any subgroup of Type I with any subgroup of Type II different from $T$ is trivial,

    \item the intersection of $T$ with any subgroup of Type I is either $\langle x^2 \rangle$ or $\langle y^2 \rangle$.
\end{enumerate}
\end{lemma}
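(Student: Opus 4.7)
The plan is to exploit the crystallographic structure of $\Gamma_6$ together with its amalgamated-product decomposition $\Gamma_6 = K_x *_A K_y$ in the notation of \Cref{notation:sol}. The translation subgroup $T=\langle x^2,y^2,z^2\rangle$ is free abelian of rank $3$ and normal of finite index in $\Gamma_6$, with quotient $\Gamma_6/T\cong(\dbZ/2)^2$, so $\Gamma_6$ is poly-$\dbZ$ of rank $3$ and part~(1) is immediate from \Cref{prop:polyZ}.

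My next step is to compute the conjugation action of $\Gamma_6/T$ on $T$. Using the defining relations one checks that $x$ acts as $\mathrm{diag}(1,-1,-1)$, $y$ as $\mathrm{diag}(-1,1,-1)$, and $z$ as $\mathrm{diag}(-1,-1,1)$ in the basis $\{x^2,y^2,z^2\}$. Two key consequences are that no non-identity element of $\Gamma_6/T$ fixes a rank-$\geq 2$ sublattice of $T$, and that $\langle x^2\rangle$, $\langle y^2\rangle$ and $\langle z^2\rangle$ are each normal in $\Gamma_6$. Parts~(2) and~(3) then follow from a centralizer argument: if $B\leq \Gamma_6$ is abelian of rank $\geq 2$, then $B/(B\cap T)$ embeds into the finite group $\Gamma_6/T$, so $B\cap T$ has the same rank as $B$; hence every element of $B$ centralizes a rank-$\geq 2$ sublattice of $T$, and the faithfulness analysis forces $B\leq T$. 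Rank-$3$ maximality then gives $B=T$. Part~(5) combines \Cref{lemma:conjugate:factors}(2) with the normality of $\langle x^2\rangle$ and $\langle y^2\rangle$, while part~(7) reduces to $\langle x^2\rangle\cap\langle y^2\rangle=1$, which holds because these are independent generators of $T$.

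For parts~(6), (8) and~(9) I would first observe that $A$ is not maximal abelian in $\Gamma_6$, since its centralizer equals $T$ by the action analysis; hence every Type I subgroup has rank $1$ and, by \Cref{lemma:mabco:klein:bottle}, sits inside some conjugate $K_x^h$ (resp.\ $K_y^h$) with $\langle x^2\rangle$ (resp.\ $\langle y^2\rangle$) as an index-$2$ subgroup. A parallel analysis of $\Gamma_6'\cong\dbZ^2\rtimes_{-1}\dbZ$ shows that any rank-$1$ Type II subgroup has the form $\langle zv\rangle$ with $v\in A$ and satisfies $(zv)^2=z^2$, so it contains $\langle z^2\rangle$ as an index-$2$ subgroup. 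The intersections can then be computed directly. For~(6), two Type I subgroups in the same conjugate $K_x^h$ meet in its center $\langle x^2\rangle$ by \Cref{lemma:mabco:klein:bottle}(4); in different conjugates, Bass--Serre theory puts their intersection inside $K_x^{h_1}\cap K_x^{h_2}\leq A$, where each one meets $A$ in exactly $\langle x^2\rangle$. Part~(9) is analogous: a Type I subgroup $B\leq K_x^h$ satisfies $B\cap T\leq K_x^h\cap T=A$ and $B\cap A=\langle x^2\rangle$. For~(8), any element common to a rank-$1$ Type I and a rank-$1$ Type II subgroup is either in $T$, in which case it lies in $\langle x^2\rangle\cap\langle z^2\rangle=1$, or outside $T$ but then maps into two distinct non-identity cosets of $T$ inside $\Gamma_6/T\cong(\dbZ/2)^2$, which is impossible. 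Part~(4) is a direct computation from the classification of maximal abelian subgroups of $\Gamma_6'$.

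The main technical obstacle will be classifying the Type II rank-$1$ subgroups inside $\Gamma_6$ itself rather than just inside $\Gamma_6'$: I must check that each $\langle zv\rangle$ is maximal abelian in all of $\Gamma_6$. This is handled by projecting to $D_\infty$: the centralizer of the infinite-order element $\bar z$ in $D_\infty$ is the cyclic subgroup $\langle\bar z\rangle$, so any element of $\Gamma_6$ centralizing $zv$ must map into $\langle\bar z\rangle$ and therefore lie in $\Gamma_6'$, reducing maximality in $\Gamma_6$ to the already computed maximality in $\Gamma_6'$.
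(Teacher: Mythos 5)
Your proof is correct, but it runs on a partly different engine than the paper's. The overall architecture coincides: both use the splitting $\Gamma_6 = K_x \ast_A K_y$, reduce Type II subgroups to $\Gamma_6' \cong \dbZ^2\rtimes_{-1}\dbZ$, and invoke \Cref{lemma:conjugate:factors} and \Cref{lemma:mabco:klein:bottle} for the Type I analysis of parts (5)--(7) and (9). The genuine divergence is in parts (2), (3) and (8). For (2)--(3) the paper cites the general crystallographic fact that the translation lattice is the unique maximal abelian subgroup of full rank, and handles rank-2 subgroups by the finite/infinite image dichotomy under $p$, reusing \Cref{thm:G_n}(2) for the infinite case; you instead compute the holonomy representation $\Gamma_6/T \cong (\dbZ/2)^2$ explicitly (the three diagonal involutions $\mathrm{diag}(1,-1,-1)$, $\mathrm{diag}(-1,1,-1)$, $\mathrm{diag}(-1,-1,1)$ --- these are correct; e.g.\ $xz^2x^{-1}=z^{-2}$ follows from $y^2xy^2=x$) and run a self-contained centralizer/faithfulness argument: any element of an abelian subgroup of rank $\geq 2$ centralizes a rank-$\geq 2$ sublattice of $T$, forcing trivial holonomy. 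This buys you, as a byproduct, the normality of $\langle x^2\rangle$, $\langle y^2\rangle$, $\langle z^2\rangle$, which you then reuse in (5) and elsewhere, whereas the paper gets the corresponding facts from the explicit relations $(x^2)^z = x^{-2}$, $(y^2)^z = y^{-2}$. For (8), the paper argues that both subgroups are infinite cyclic with finite-index subgroups $\langle x^2\rangle$ (or $\langle y^2\rangle$) and $\langle z^2\rangle$ whose intersections are trivial; your argument via the holonomy quotient --- common elements inside $T$ land in $\langle x^2\rangle\cap\langle z^2\rangle = 1$, common elements outside $T$ would have to lie simultaneously in two distinct nontrivial cosets of $T$ --- is different but equally valid. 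Finally, your closing verification that Type II subgroups maximal in $\Gamma_6'$ are maximal in all of $\Gamma_6$ (via the centralizer of $\bar z$ in $D_\infty$) essentially reproves \Cref{lemma:conjugate:factors}(3), which the paper uses for this purpose; citing it would shorten your argument.
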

\begin{proof}
\begin{enumerate}
        \item Since $G$ is a torsion free poly-$\dbZ$ group of rank 3, any abelian subgroup is free of rank at most 3, see \cref{prop:polyZ}.
        \item For any $n$-dimensional crystallographic group, the translation group is the only maximal abelian subgroup of rank $n$. 
        \item Let $B$ be a rank 2 abelian subgroup of $\Gamma_6$. If $p(B)$ is finite, then $B$ is subconjugate to either $K_x$ or $K_y$, thus $B$ must be contained in $A\subseteq T$. If $p(B)$ is infinite, then $B$ is contained in $\Gamma_6'$. Thus by \cref{thm:G_n} (2) $B$ is contained in $T$.
        \item It is a direct consequence of the fact that $\Gamma_6'$ is isomorphic to $G_2$ and \Cref{thm:G_n} (3).
        \item By \cref{lemma:conjugate:factors} (1) all conjugates of $K_x$ (resp. $K_y$) are of the form $K_x^{z^n}$ (resp. $K_y^{z^n}$) for some $n\in \dbZ$. It is easy to check that $(x^2)^z = x^{-2}$ and $(y^2)^z = y^{-2}$ directly from the presentation. The claim now follows from \cref{lemma:conjugate:factors}(2).
        \item This is a consequence of the previous item and \Cref{lemma:mabco:klein:bottle} (4).
        \item By item (6) we have, for all $n$, that  $Z(K_x)^{z^n}\cap Z(K_y)^{z^n}=\langle x^2\rangle\cap \langle y^2\rangle$, and the latter intersection is clearly trivial since $x^2$ and $y^2$ are independent generators of the translation group $T$.
        
        \item Let $B$ a subgroup of type I  and $C$ a subgroup of Type II different from $T$.   By parts (2) and (3) both $B$ and $C$ must be infinite cyclic. By part (6) $B$ either contains $\langle x^2\rangle$ or $\langle y^2\rangle$ as a finite index subgroup, and by part (4) $C$ contains $\langle z^2\rangle$ as a finite index subgroup. The claim follows by noting that $\langle x^2\rangle\cap \langle z^2\rangle$ and $\langle y^2\rangle\cap \langle z^2\rangle$ are trivial.

        \item Let $B$ be a maximal abelian subgroup of Type I, which means that $p(B)$ is finite. Since $p(T)$ is infinite cyclic, we must have $p(B \cap T) = 0$, so that $B \cap T \le \ker(p) = \langle x^2, y^2 \rangle$. By \cref{lemma:mabco:klein:bottle} (5), the intersection of $B$ with $\langle x^2, y^2 \rangle$ is the center of the conjugate of $K_x$ or $K_y$ containing $B$, and by part (5) this is either $\langle x^2 \rangle$ or $\langle y^2 \rangle$.

\end{enumerate}
\end{proof}
\begin{figure}
    \centering
    \includegraphics{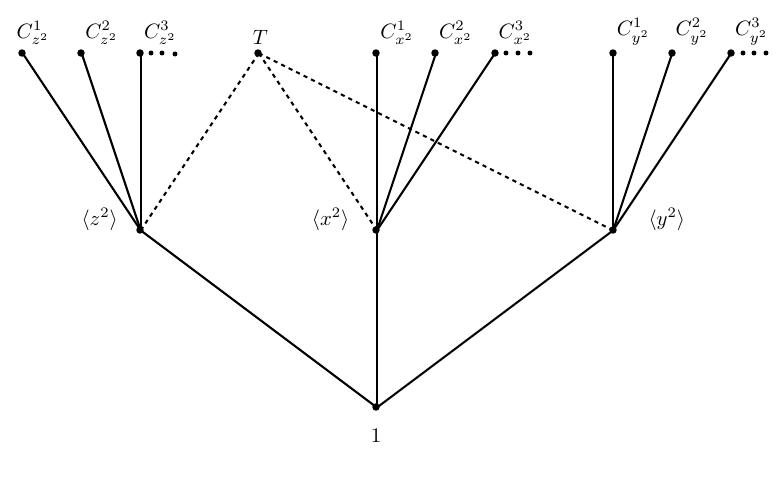}
    \caption{\small The Hasse diagram of $\mAb{\Gamma_6}$. Subgroups of the form $C^i_{z^2}$ are infinite cyclic of Type II. Subgroups of the form $C^i_{x^2}$ and $C^i_{y^2}$ are infinite cyclic of Type I.}
\label{fig:G6}
\end{figure}

The following corollary is an immediate consequence of the preceding lemma.

\begin{corollary}\label{Cor:chains:G6} Following \Cref{notation:sol} and \Cref{lem:ab:subgps:G6}, every maximal chain in $\mAb{G}$ is of one of the following types:
    \begin{enumerate}
        \item $1 < \langle w \rangle< T$ where $w$ is either $x^2$, $y^2$ or $z^2$,
        \item $1 < \langle w \rangle< C_w$  where $w$ is either $x^2$, $y^2$ or $z^2$ and $C_w$ is an infinite cyclic group that contains $\langle w \rangle$ as a subgroup of index 2,
    \end{enumerate}
\end{corollary}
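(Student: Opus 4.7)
The plan is to read off the maximal chains of $\mAb{\Gamma_6}$ directly from the description of its elements and covering relations provided by \Cref{lem:ab:subgps:G6} (summarized in \Cref{fig:G6}).

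First, I would enumerate the elements of $\mAb{\Gamma_6}$. By parts (2) and (3) of \Cref{lem:ab:subgps:G6}, the maximal abelian subgroups of $\Gamma_6$ are $T$ together with the infinite cyclic maximal subgroups of Type I and Type II. Parts (4) and (6)--(9) show that the intersection of any two distinct maximal abelian subgroups is one of $\{1\}$, $\langle x^2 \rangle$, $\langle y^2 \rangle$, $\langle z^2 \rangle$, and these last three subgroups pairwise intersect trivially since they are generated by independent translations. Hence $\mAb{\Gamma_6}$ consists of the trivial subgroup, the three subgroups $\langle x^2 \rangle$, $\langle y^2 \rangle$, $\langle z^2 \rangle$, the subgroup $T$, and all maximal infinite cyclic subgroups of Type I or Type II.

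Second, I would determine the covering relations. Each of $\langle x^2 \rangle$, $\langle y^2 \rangle$, $\langle z^2 \rangle$ covers $\{1\}$. The subgroup $T$ properly contains each of them, and no element of $\mAb{\Gamma_6}$ lies strictly in between: by \Cref{lem:ab:subgps:G6}(3) together with the enumeration above, $T$ is the unique element of rank at least $2$ in $\mAb{\Gamma_6}$. For a Type I maximal cyclic subgroup $C$ inside a conjugate of $K_x$, part (5) identifies its center as $\langle x^2 \rangle$, and \Cref{lemma:mabco:klein:bottle}(4) shows $C$ contains this center with index $2$; since $2$ is prime, this yields a covering relation $\langle x^2 \rangle < C$. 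The analogous statement for Type I subgroups in conjugates of $K_y$ yields covering relations $\langle y^2 \rangle < C$, and the identification $\Gamma_6' \cong G_2$ together with \Cref{thm:G_n}(3) gives covering relations $\langle z^2 \rangle < C$ for every maximal Type II cyclic $C$.

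Finally, since every element of $\mAb{\Gamma_6}$ has rank $0$, $1$, or $3$, any maximal chain must start at $\{1\}$, pass through exactly one rank-$1$ element, and end at a maximal element of $\mAb{\Gamma_6}$ above it. Matching each $\langle w \rangle$ with the maximal elements containing it -- namely $T$ in all cases, together with the cyclic subgroups $C_w$ of the appropriate Type -- yields exactly the two families listed in the statement. The main obstacle is bookkeeping: one must confirm that no rank-$2$ intermediate element has been overlooked, which is guaranteed by \Cref{lem:ab:subgps:G6}(3) and the observation that the intersection of subgroups of rank at most $1$ has rank at most $1$.
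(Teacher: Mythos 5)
Your proposal is correct and takes essentially the same route as the paper, which offers no separate argument at all but declares the corollary an immediate consequence of \Cref{lem:ab:subgps:G6}; your write-up simply makes the implicit bookkeeping explicit, namely that the poset elements are $1$, $\langle x^2\rangle$, $\langle y^2\rangle$, $\langle z^2\rangle$, $T$, and the maximal cyclic subgroups of Types I and II, with the covering relations you list. The one small point worth flagging (present in the paper as well) is that \Cref{lem:ab:subgps:G6} never literally computes the intersection of a Type I subgroup in a conjugate of $K_x$ with one in a conjugate of $K_y$---item (7) treats only their centers---but triviality follows at once since each such subgroup contains the corresponding center $\langle x^2\rangle$ or $\langle y^2\rangle$ with index $2$, so a nontrivial intersection would force $\langle x^2\rangle\cap\langle y^2\rangle\neq 1$.
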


\begin{theorem}\label{thm:Ecom:G6}
We have $\Ecom{\Gamma_6}\simeq \bigvee_{\mathbb{N}} S^1$.
\end{theorem}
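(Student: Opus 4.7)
The plan is to apply \Cref{Ecom_mAbCo} to identify $\Ecom{\Gamma_6}$ with the geometric realization $X$ of the poset $\mAbCo{\Gamma_6}$, and then perform the two-staged collapsing procedure sketched in the introduction to reduce $X$ to a connected $1$-complex. By \Cref{Cor:chains:G6}, every $2$-simplex of $X$ has one of exactly two shapes: Type (1) $\{g\}<g\langle w\rangle<gT$ or Type (2) $\{g\}<g\langle w\rangle<gC_w^i$, with $w\in\{x^2,y^2,z^2\}$.

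For Stage 1, I would show that the edge $\{g\}<gC_w^i$ of a Type (2) triangle is a free face. Any $2$-simplex containing it has the form $\{g\}<gH<gC_w^i$ with $H\in\mAb{\Gamma_6}$ strictly between $1$ and $C_w^i$; by the Hasse diagram in \Cref{fig:G6} the only such $H$ is $\langle w\rangle$, because $C_w^i$ is infinite cyclic and $\langle w\rangle$ is its unique index-$2$ subgroup appearing in $\mAb{\Gamma_6}$. Elementarily collapsing all Type (2) triangles simultaneously along these free faces removes every Type (2) $2$-simplex together with every edge $\{g\}<gC_w^i$.

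For Stage 2, consider a Type (1) triangle $\{g\}<g\langle w\rangle<gT$. The edge $\{g\}<g\langle w\rangle$ was originally shared with the now-collapsed Type (2) triangles $\{g\}<g\langle w\rangle<gC_w^i$. Since cosets of $T$ partition $\Gamma_6$, there is a \emph{unique} coset $gT$ containing $g\langle w\rangle$, so after Stage 1 this edge lies in a unique remaining $2$-simplex, namely the Type (1) triangle itself, which I now collapse through it. The outcome is a connected $1$-dimensional subcomplex $Y\subseteq X$, and consequently $X\simeq Y\simeq\bigvee_I S^1$ for some countable index set $I$.

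It remains to show $|I|=\aleph_0$. By part~(6) of \Cref{lem:ab:subgps:G6}, the subgroup $\langle x^2\rangle$ is contained in infinitely many maximal subgroups of Type I that pairwise intersect exactly in $\langle x^2\rangle$; each such $C_{x^2}^i$ contributes, after Stage 1 and Stage 2, a distinct partner vertex $a_i\langle x^2\rangle$ joined to $\langle x^2\rangle$ in $Y$ through a path of length two (across the $C_{x^2}^i$-vertex), while the surviving edges from each $\langle x^2\rangle$-coset to a ``hub'' vertex $gT$ (of which there are only four) glue these partner vertices back together. A simple spanning-tree/Euler count on the resulting subgraph with finitely many hubs and countably many distinct $a_i\langle x^2\rangle$-vertices forces countably many independent cycles, so $|I|=\aleph_0$. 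The step I expect to require the most care is verifying the two free-face claims \emph{uniformly in $g$}, so that the collapses can genuinely be performed in parallel, and then organizing the cycle count cleanly around the fourfold $T$-coset partition.
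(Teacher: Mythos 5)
Your proof is correct and its core is essentially identical to the paper's: the same reduction via \Cref{Ecom_mAbCo} and \Cref{Cor:chains:G6}, and the same two-stage collapse (first the Type (2) triangles through the free edges $\{g\}<gC_w$, then the Type (1) triangles through the now-free edges $\{g\}<g\langle w\rangle$, the uniqueness of the $T$-coset being exactly the paper's justification). The only genuine divergence is the final step showing the wedge is infinite: the paper observes that the surviving graph still contains a copy of $\mAbCo{\Gamma_6'}$, where $\Gamma_6'\cong G_2$ already has the homotopy type of $\bigvee_{\mathbb{N}}S^1$ by \Cref{thm:Ecom:G1-G5}, whereas you count cycles directly in the surviving graph using \Cref{lem:ab:subgps:G6}(6) and the fact that $[\Gamma_6:T]=4$ (which is correct: $T$ is the translation lattice and the holonomy is $\dbZ/2\times\dbZ/2$). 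Your route is self-contained and avoids invoking the $\mbe^3$ computation, at the cost of a little bookkeeping; to make it rigorous you should record that the partner cosets $a_i\langle x^2\rangle$ are pairwise distinct (if $a_i\langle x^2\rangle=a_j\langle x^2\rangle$ then $a_i\in C^i_{x^2}\cap C^j_{x^2}=\langle x^2\rangle$, a contradiction), that the edges $a_i\langle x^2\rangle<a_iT$ and $\langle x^2\rangle<C^i_{x^2}$ survive both collapse stages (they do, since only edges of the forms $\{g\}<gC_w$ and $\{g\}<g\langle w\rangle$ are removed), and that infinite cycle rank of a subgraph passes to the whole graph (immediate for $1$-complexes, since $H_1$ of a subgraph injects into $H_1$ of the graph). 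Your concern about performing the collapses ``uniformly in $g$'' is the right one, and it is resolved exactly as in the paper: the free edges of distinct triangles being removed at each stage are pairwise distinct, so the elementary collapses can be carried out simultaneously.
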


\begin{proof}
As a consequence of \Cref{Cor:chains:G6}, all maximal chains in $\mAbCo{G}$ are of one of the following types:
    \begin{enumerate}
        \item $\{g\}< g\langle w \rangle< gT$ where $w$ is either $x^2$, $y^2$ or $z^2$,
        \item $\{g\} < g\langle w \rangle< gC_w$  where $w$ is either $x^2$, $y^2$ or $z^2$ and $C_w$ is an infinite cyclic group that contains $\langle w \rangle$ as a subgroup of index 2,
    \end{enumerate}
Note that for a given choice of $g \in G$ and subgroup $C_w$ of Type I, there is a unique triangle of the form (2) in the above list, so we can remove the edge ${g} < gC_w$ and the interior of the triangle without changing the homotopy type. After removing those triangles, each edge of the form $\{g\} < g \langle w \rangle$ is in a unique triangle, which is of the form (1). We can remove those edges and the interior of their containing triangles without changing the homotopy type either.

At this point, we have no triangles remaining, which shows $\mAbCo{G}$ has the homotopy type of a graph, and hence of a wedge of circles (since it is connected). The complex obtained after removing the triangles still contains a copy of $\mAbCo{G'}$, which has the homotopy type of a countable wedge of circles, and therefore so does $\mAbCo{G}$.
\end{proof}

\section{3-manifolds modeled on $\nil$}

A very explicit classification of fundamental groups of manifolds modeled on $\nil$ can be found in \cite{DIKK93}. In that paper, they split the classification into seven infinite families that they call Type 1-7 . For convenience we collect the types into three clusters.

\begin{description}
\item[Semi direct products] Groups of the form $\dbZ^2\rtimes_\varphi \dbZ$ where $\varphi$ is a matrix of the form $\begin{pmatrix}
    1&n\\
    0&1
    \end{pmatrix}$ with $n\geq 1$. This corresponds to Type 1 groups in \cite{DIKK93}.
\item[Orientable crystallographic quotient] Groups $G$ that fit in a central extension of the form
\begin{equation}\label{critallographic:quotient}
1\to \dbZ \to G  \to \dbZ^2\rtimes_{A_n} \dbZ/n \to 1 \end{equation}
where $A_n$ is one of the matrices in \cref{eq:matrices:2crystallographic}. These groups correspond to Types 2, 5, 6, and 7 in \cite{DIKK93}.
    \item[Non-orientable crystallographic quotient] We subdivide these groups into two Types that we will call Type pg and Type pgg respectively. Groups of Type pg are defined by means of the following presentation
    \[E_k=\langle a,b,c, \alpha | [a,b]=c^{2k}, \alpha c = c^{-1}\alpha, [c,a]=[c,b]=1, \alpha b = b^{-1}\alpha c^{-k}, \alpha^2=a \rangle\]
    while groups of Type pgg are defined by means of the following presentation
    \begin{align*}
    \Gamma_k=\langle a,b,c, \alpha,\beta |& [a,b]=c^{2k}, [c,a]=1,[c,b]=1, \alpha c = c^{-1}\alpha, \beta a = a^{-1}\beta c^{k}, \beta b=b^{ -1}\beta c^{-k},\\
    & \beta^2=c,\alpha^2=a, \alpha b=b^{-1}\alpha c^{-k}, \beta \alpha=a^{-1}b^{-1}\alpha\beta c^{-k-1}\rangle
\end{align*}
where $k$ in each case is a positive integer. These groups correspond to Types 3 and 4 in \cite{DIKK93}.
\end{description}

\subsection{Semi direct products}
\begin{theorem}\label{thm:nil:semidirect}
Let $G=\dbZ^2\rtimes_\varphi \dbZ$ where $\varphi$ is a matrix of the form $\begin{pmatrix}
    1&n\\
    0&1
    \end{pmatrix}$ with $n\geq 1$. Then 
    \begin{enumerate}
        \item every maximal abelian subgroup of $G$ has rank 2,
        \item the intersection of any two maximal subgroups equals $Z(G)$,
        \item     $\Ecom{G}\simeq \bigvee_{\mathbb{N}} S^1$.
    \end{enumerate}
    
\end{theorem}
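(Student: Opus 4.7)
The plan is to exhibit $G$ as a central extension $1 \to \dbZ \to G \to \dbZ^2 \to 1$, transfer the classification of maximal abelian subgroups to the quotient $\dbZ^2$ via \Cref{thm:central:extension}, and conclude with \Cref{cor:height:1}.

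First I would compute the center directly from the multiplication $(v,k)(w,\ell) = (v + \varphi^k w,\, k+\ell)$. Writing $v=(v_1,v_2)$ and using $\varphi^\ell v = (v_1 + \ell n v_2,\, v_2)$, one checks that the commutator $[(v,k),(w,\ell)]$ lies in the first $\dbZ$-factor and depends only on the quantity $k w_2 - \ell v_2$. This identifies $Z := Z(G) = \dbZ \times \{0\} \rtimes \{0\} \cong \dbZ$ and shows that the map $((v_1,v_2),k) \mapsto (v_2,k)$ descends to an isomorphism $G/Z \cong \dbZ^2$; in particular $[G:Z]$ is countably infinite.

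Next I would classify the subgroups $L \le \dbZ^2 \cong G/Z$ whose preimage in $G$ is abelian. The full $\dbZ^2$ fails, since $G$ is nonabelian when $n \ge 1$. For any rank-$1$ subgroup $L = \langle (a,b) \rangle$, if $(v,k)$ and $(w,\ell)$ both lie in the preimage, then $(v_2,k) = s(a,b)$ and $(w_2,\ell) = t(a,b)$ for some $s,t \in \dbZ$, so $k w_2 - \ell v_2 = stab - tsab = 0$; hence the preimage is abelian of rank $2$. Therefore the maximal subgroups of $\dbZ^2$ with abelian preimage are precisely the rank-$1$ direct summands $\langle (a,b) \rangle$ with $\gcd(a,b) = 1$.

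By \Cref{remark:maximal:contains:center} every maximal abelian subgroup of $G$ contains $Z$, so \Cref{thm:central:extension} gives a bijection between maximal abelian subgroups of $G$ and rank-$1$ direct summands of $\dbZ^2$, each such preimage having rank $2$; this proves (1). Since any two distinct rank-$1$ direct summands of $\dbZ^2$ intersect in $0$, their preimages in $G$ intersect in $Z$, which proves (2). Finally, (3) follows at once from \Cref{cor:height:1}, because $Z$ has countably infinite index in $G$. The only step requiring any care is the commutator identity in the second paragraph, and this is routine given the explicit form of $\varphi$.
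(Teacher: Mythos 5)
Your proof is correct and takes essentially the same route as the paper's: both pass to the central extension $1\to Z\to G\to \dbZ^2\to 1$, transfer the classification of maximal abelian subgroups through \Cref{thm:central:extension}, and conclude with \Cref{cor:height:1}; the only difference is that where you verify the commutator identity by hand, the paper instead cites the fact that $G$ is a rank-3 poly-$\dbZ$ group containing no $\dbZ^3$ subgroup to rule out larger abelian preimages. One small imprecision: in classifying subgroups of $\dbZ^2\cong G/Z$ with abelian preimage you explicitly exclude only the full group and treat rank-1 subgroups, but proper finite-index subgroups such as $2\dbZ\times 2\dbZ$ must be excluded as well; this is immediate from your own criterion, since any rank-2 subgroup contains two non-proportional vectors $(v_2,k)$ and $(w_2,\ell)$, whence $k w_2 - \ell v_2 \neq 0$ and the preimage is nonabelian.
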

\begin{proof}
Consider the central extension
\[1\to Z \to G \to Q \to 1\]
where $Z= \dbZ $, and the first map is given by $n\mapsto (n,0,0)$ and the second map is the quotient projection. Note that $Q$ is isomorphic to $\dbZ^2$. 

Notice that  $G$ is a poly-$\dbZ$ group of rank $3$ that does not contain a $\dbZ^3$-subgroup, see \cite[Table 1]{AFW15}. On the other hand, it is not difficult to verify that $Z$ is the center of $G$. Since $Q$ is isomorphic to $\dbZ^2$, and the preimage in $G$ of any infinite cyclic subgroup of $Q$ is abelian of rank 2, we conclude that every maximal abelian subgroup of $G$ is isomorphic to $\dbZ^2$ and it contains $Z$. This proves (1). Let us call $p$ the surjective homomorphism $G\to Q$. As $Q$ is torsion free, then any maximal abelian subgroup of $G$ isomorphic to $\dbZ$ must be equal to $Z$. Let $B,C\cong \dbZ^2$ be two maximal abelian subgroup of $G$, then $p(B)$ and $p(A)$ are maximal abelian subgroups of $Q$ that are isomorphic to $\dbZ$. Hence either $p(A)=p(B)$ or $p(A)\cap p(B)$ is trivial. This proves (2). Now (3) follows directly from \Cref{cor:height:1}.
\end{proof}

\subsection{Orientable crystallographic quotient}
\begin{theorem}\label{thm:nil:orientable:orbifold}
Let $G$ be the fundamental group of a 3-manifold modeled on $\nil$ and such that it fits in a central extension of the form
\begin{equation}
1\to \dbZ \to G  \to \dbZ^2\rtimes_{A_n} \dbZ/n \to 1 \end{equation}
where $A_n$ is one of the matrices in \cref{eq:matrices:2crystallographic}.
 Then $\Ecom{G}\simeq \bigvee_{\mathbb{N}} S^1$.
\end{theorem}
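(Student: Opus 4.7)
The plan is to apply \Cref{thm:central:extension} to the given central extension with kernel $K=\dbZ$, identify the relevant poset of $Q$-subgroups, and then invoke \Cref{cor:height:1}. Write $p\colon G\to Q$ for the quotient and $Q=\dbZ^2\rtimes_{A_n}\dbZ/n$.

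First I would classify the abelian subgroups of $Q$ whose full preimage in $G$ is abelian. Since the extension is central, every cyclic subgroup of $Q$ automatically has abelian preimage: it is generated by the central $K$ together with a single lift. Conversely, by \Cref{prop:2dim:crystallographic} the only noncyclic abelian subgroups of $Q$ are rank-$2$ sublattices of the translation subgroup $\dbZ^2\rtimes\{0\}$. I claim none of these has abelian preimage, and this is the crucial point where the $\nil$ geometry enters. The full preimage of $\dbZ^2\rtimes\{0\}$ is a discrete Heisenberg-like group: for a pair $\tilde u,\tilde v$ of lifts of a basis of $\dbZ^2$, the commutator $[\tilde u,\tilde v]$ lies in $K$ and is nontrivial, for otherwise the preimage of $\dbZ^2$ would be isomorphic to $\dbZ^3$ and $G$ would be virtually $\dbZ^3$, forcing the geometry to be $\mbe^3$ rather than $\nil$. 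A short commutator computation in the Heisenberg group then shows that lifts $\tilde u^a\tilde v^b,\tilde u^c\tilde v^d$ of generators of any rank-$2$ sublattice satisfy $[\tilde u^a\tilde v^b,\tilde u^c\tilde v^d]=[\tilde u,\tilde v]^{ad-bc}\neq 1$, so no rank-$2$ sublattice has abelian preimage.

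With this in hand, the maximal elements of the poset $\calQ$ from \Cref{thm:central:extension} are precisely the maximal cyclic subgroups of $Q$, and by \Cref{thm:central:extension} the maximal abelian subgroups of $G$ (which all contain $K$ by \Cref{remark:maximal:contains:center}) are exactly their full preimages under $p$. By \Cref{prop:2dim:crystallographic} parts (3) and (4), together with the obvious fact that a finite rotation subgroup and the translation lattice intersect trivially, any two distinct maximal cyclic subgroups of $Q$ intersect trivially. Consequently any two distinct maximal abelian subgroups of $G$ intersect in exactly $K$.

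Finally, a direct computation shows that $Z(Q)=1$ for $n\in\{2,3,4,6\}$, since $A_n$ has no nonzero fixed vectors; hence $Z(G)=K$. Because $[G:Z(G)]=|Q|$ is countably infinite, \Cref{cor:height:1} yields $\Ecom{G}\simeq\bigvee_{\dbN}S^1$. The main obstacle in this argument is the commutator computation of the previous paragraph: it is what separates the $\nil$ case from the flat case of the previous section, where rank-$2$ sublattices do lift to abelian subgroups and more subtle coset bookkeeping is required.
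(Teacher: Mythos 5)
Your proposal is correct and follows essentially the same route as the paper: translate via \Cref{thm:central:extension}, use \Cref{prop:2dim:crystallographic} to see that only the cyclic subgroups of $\dbZ^2\rtimes_{A_n}\dbZ/n$ can have abelian preimage (the obstruction for rank-$2$ sublattices being that an abelian preimage would give a $\dbZ^3$ in $G$, impossible for $\nil$), note that distinct maximal cyclic subgroups of the quotient intersect trivially, and conclude with \Cref{cor:height:1}. Your explicit Heisenberg commutator computation $[\tilde u^a\tilde v^b,\tilde u^c\tilde v^d]=[\tilde u,\tilde v]^{ad-bc}$ and the verification that $Z(G)=K$ merely spell out steps the paper treats as immediate.
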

\begin{proof}
Since the extension is central, we can use \Cref{thm:central:extension} to translate  the problem to studying the poset generated by the maximal abelian subgroups of $\dbZ^2\rtimes_{A_n} \dbZ/n$ that have abelian preimage in $G$.

By \Cref{prop:2dim:crystallographic} we have that every abelian subgroup of $\dbZ^2\rtimes_{A_n} \dbZ/n$ is either finite cyclic or a subgroup of $\dbZ^2\rtimes_{A_n} \{0\}$. The preimage of (any finite index subgroup of) $\dbZ^2\rtimes_{A_n} \{0\}$ in $G$ is clearly nonabelian, otherwise $G$ contains a $\Z^3$ subgroup which is impossible in the presence of $\nil$-geometry. Hence we get that the only abelian subgroups of $\dbZ^2\rtimes_{A_n} \dbZ/n$ with abelian preimage are the (infinite or finite) cyclic  ones. Clearly the intersection of any two of these maximal distinct cyclic subgroups is trivial. Therefore the intersection of any two distict maximal abelian subgroups of $G$ is the kernel group $\dbZ$; now the claim follows from \Cref{cor:height:1}.
\end{proof}

\subsection{Case pg}

This is a family of groups, one for each positive integer $k$, and they correspond to Type 3 in \cite{DIKK93}. We have the following presentation

\[E_k=\langle a,b,c, \alpha | [a,b]=c^{2k}, \alpha c = c^{-1}\alpha, [c,a]=[c,b]=1, \alpha b = b^{-1}\alpha c^{-k}, \alpha^2=a \rangle\]

\begin{lemma}
    Let $k$ be a positive integer. Then 
    $E_k$ fits in the following short exact sequence
    \[1\to \langle a,c \rangle \to E_k \to \langle \bar \alpha \rangle\ast \langle \bar{b\alpha} \rangle \to 1\]
    In particular we have a splitting $E_k= K_\alpha \ast_A K_{b\alpha}$, where $K_\alpha = \langle c, \alpha \rangle$, $K_{b\alpha} = \langle c, b\alpha \rangle$, and $A=\langle a,c \rangle$.

    Moreover, following \Cref{notation:sol}, $E_k'$ is isomorphic to $A\rtimes_\varphi \dbZ$ where $\varphi$ is the automorphism of $A$ given by the matrix $\begin{pmatrix}
    1&2k\\
    0&1
    \end{pmatrix}$.
\end{lemma}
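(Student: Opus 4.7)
The plan is to first show that $N = \langle a, c \rangle$ is normal in $E_k$ with quotient isomorphic to $D_\infty \cong \mathbb{Z}/2 \ast \mathbb{Z}/2$, then deduce the amalgamated product decomposition via Bass--Serre theory, and finally pull back the standard index-$2$ subgroup of $D_\infty$ to identify $E_k'$.

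For normality of $N$, I would verify on each generator of $E_k$: conjugation by $c$ and $a$ is trivial on $N$; conjugation by $\alpha$ gives $\alpha c \alpha^{-1} = c^{-1}$ and $\alpha a \alpha^{-1} = a$ (the latter because $a = \alpha^2$ commutes with $\alpha$); and conjugation by $b$ gives $b c b^{-1} = c$ and, from $[a,b]=c^{2k}$, $b a b^{-1} = a c^{-2k}$. Setting $a = c = 1$ in the presentation then collapses all relations except $\bar\alpha^2 = 1$ and $\bar\alpha \bar b = \bar b^{-1} \bar\alpha$, yielding $D_\infty$. Writing $D_\infty$ as the free product $\langle \bar\alpha \rangle \ast \langle \overline{b\alpha} \rangle$ just requires noting that $(\overline{b\alpha})^2 = \bar b \bar\alpha \bar b \bar\alpha = \bar b \cdot \bar b^{-1}\bar\alpha \cdot \bar\alpha = 1$.

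For the splitting, I will apply the following general principle from Bass--Serre theory: if $1 \to N \to G \xrightarrow{p} Q_1 \ast Q_2 \to 1$ is exact, then $G = p^{-1}(Q_1) \ast_N p^{-1}(Q_2)$, obtained by pulling back the action of $Q_1 \ast Q_2$ on its Bass--Serre tree. Applying this to $p \colon E_k \to \langle \bar\alpha \rangle \ast \langle \overline{b\alpha} \rangle$ gives the splitting with edge group $A = N$ and vertex groups $p^{-1}(\langle\bar\alpha\rangle)$ and $p^{-1}(\langle\overline{b\alpha}\rangle)$. It remains to identify these vertex groups with $K_\alpha = \langle c, \alpha \rangle$ and $K_{b\alpha} = \langle c, b\alpha \rangle$ respectively. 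The first is immediate since $a = \alpha^2 \in K_\alpha$, hence $\langle A, \alpha\rangle = \langle c, \alpha\rangle$. For the second I compute $(b\alpha)^2 = b\alpha b\alpha = b(b^{-1}\alpha c^{-k})\alpha = \alpha c^{-k}\alpha = c^k \alpha^2 = c^k a$, so $a = c^{-k}(b\alpha)^2 \in K_{b\alpha}$ and thus $\langle A, b\alpha\rangle = \langle c, b\alpha\rangle$ as required. Each vertex group is generated by $c$ together with an element of order $2$ acting on $c$ by inversion (since $(b\alpha)c(b\alpha)^{-1} = b c^{-1} b^{-1} = c^{-1}$), so each is a Klein bottle group with $A$ as its canonical $\mathbb{Z}^2$ subgroup of index $2$.

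For the moreover statement, following \Cref{notation:sol}, $E_k' = p^{-1}(\langle \overline b\rangle)$, where $\overline b$ generates the infinite cyclic subgroup $\mathbb{Z} \rtimes \{0\}$ of $D_\infty$. Since $b \in E_k$ lifts $\bar b$ and has infinite order (its image does), the short exact sequence
\[
1 \to A \to E_k' \to \langle \bar b\rangle \to 1
\]
is split, giving $E_k' \cong A \rtimes_\varphi \mathbb{Z}$, where $\varphi$ is conjugation by $b$. From the computation above, $bcb^{-1} = c$ and $bab^{-1} = ac^{-2k}$; choosing a suitable basis for $A \cong \mathbb{Z}^2$ (and, if needed, replacing $b$ by $b^{-1}$ to absorb a sign) gives $\varphi$ the matrix $\begin{pmatrix} 1 & 2k \\ 0 & 1 \end{pmatrix}$. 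The main small obstacle throughout is keeping track of the commutation relation $\alpha b = b^{-1}\alpha c^{-k}$ with its accompanying $c^{-k}$ twist; once that identity is used cleanly in computing $(b\alpha)^2$, everything falls into place.
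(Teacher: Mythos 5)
Your proposal is correct in substance and follows essentially the same route as the paper's proof: a direct verification that $\langle a,c\rangle$ is normal, the computation of the quotient as the infinite dihedral group $\langle \bar\alpha\rangle \ast \langle \overline{b\alpha}\rangle$ (including the key identity $(b\alpha)^2 = c^k a$, which the paper computes in exactly the same way), and the conjugation computation on $A$ for the ``moreover'' part. The one genuine addition is that you make the passage from the short exact sequence to the amalgam explicit via the Bass--Serre pullback principle $G = p^{-1}(Q_1)\ast_N p^{-1}(Q_2)$; the paper simply asserts the splitting after identifying the quotient, so your version is, if anything, more complete on that step. Your identification of the vertex groups ($a=\alpha^2$, $a = c^{-k}(b\alpha)^2$) matches the paper's.

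Two local points need repair, one of which is a genuine (if easily fixed) error. First, in your final sentence on the splitting you say each vertex group is generated by $c$ together with ``an element of order $2$'' inverting $c$: as literally written this would make the vertex groups infinite dihedral, not Klein bottle groups, and it contradicts your own computations --- $E_k$ is torsion-free and $\alpha$, $b\alpha$ have infinite order, with $\alpha^2 = a$ and $(b\alpha)^2 = c^k a$ nontrivial. What you need is that each vertex group is generated by $c$ and an infinite-order element inverting $c$, hence is a quotient of the Klein bottle group, and that it is not a \emph{proper} quotient; the paper settles this by noting that the vertex group contains $A \cong \dbZ^2$ with index $2$ and is torsion-free and nonabelian. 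Second, your $bab^{-1} = ac^{-2k}$ differs in sign from the paper's $bab^{-1} = c^{2k}a$ (a commutator-convention discrepancy in reading $[a,b]=c^{2k}$); your remark about replacing $b$ by $b^{-1}$ does correctly absorb this, since $A\rtimes_\varphi \dbZ \cong A\rtimes_{\varphi^{-1}}\dbZ$, so the stated matrix is recovered either way.
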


\begin{proof}
    The relations that define $E_k$ imply that the subgroup $\langle c,a\rangle$ is stable under conjugation for $b$ and $\alpha$, therefore $\langle a,c\rangle$ is a normal subgroup of $E_k$. It is clear that the quotient group $p(E_k)=E_k/\langle a,c\rangle$ is generated by $\bar{\alpha}$ and $\bar{b\alpha}$. Moreover $p(E_k)$ admits the following presentation $\langle \bar{\alpha},\bar{b\alpha}| (\bar{\alpha})^2=1, \bar{b\alpha}^2=1\rangle$ because $b\alpha b\alpha=bb^{-1}\alpha c^{-k}\alpha=\alpha c^{-k}\alpha=c^k\alpha^2=c^k a$, thus $p(E_k)= \langle \bar{\alpha}\rangle\ast\langle \bar{b\alpha}\rangle\cong D_\infty$. Since $\alpha^2=a$ and $\alpha c \alpha^{-1}=c^{-1}$, $A$ is a index 2 subgroup of $\langle c, \alpha\rangle$ and this group torsion free and non abelian, thus $\langle c,\alpha\rangle=K_\alpha$ is the fundamental group of a Klein bottle, in a similar way $\langle c, b\alpha\rangle=K_{b\alpha}$ is the fundamental group of a Klein bottle.

    To finish, from the presentation, we have that $(b\alpha^2)c(b\alpha^2)^{-1}=c$ and  $(b\alpha^2)a(b\alpha^2)^{-1}=b \alpha^2 b^{-1}=b a b^{-1}=c^{2k}a$ so the automorphism $A\to A$ given by conjugation by $b\alpha^2$ is represented by the matrix  $\begin{pmatrix}
    1&2k\\
    0&1
    \end{pmatrix}$ with respect to the ordered base $c,a$.
\end{proof}

In the following lemma we describe the maximal abelian subgroups of $E_k$ and their intersections, all the conclusions in this lemma are summarized in \Cref{fig:pg}.

\begin{lemma}\label{lem:ab:subgps:pg}
Following \Cref{notation:sol} we have that:
\begin{enumerate}
    \item every abelian subgroup of $E_k$ is free of rank at most 2,
    \item every maximal abelian subgroup of Type II has rank 2 and the intersection of any two of such subgroups is $\langle c \rangle$,
    \item  the center of $K_\alpha^{(ba)^n}$ (resp. $K_{b\alpha}^{(ba)^n}$) is $\langle c^{2nk}a \rangle$ (resp. $\langle c^{(2n+1)k}a \rangle$),

    \item let $i,j \in \{ \alpha, b\alpha \}$, if $K_i^{(ba)^n}\neq K_j^{(ba)^m}$, then $Z(K_i^{(ba)^n})\cap Z(K_j^{(ba)^m})$ is trivial,

    \item the intersection of any two subgroups of Type I that lie in a single conjugate $K_i^{(ba)^n}$ is $Z(K_i^{(ba)^n})$,

    \item the intersection of two subgroups of Type I is trivial if  they belong to distinct conjugates of the factors of $E_k$,

    \item the intersection of $A$ with a subgroup of Type I that is subconjugate to $K_i^{(ba)^n}$ is $Z(K_i^{(ba)^n})$,

    \item the intersection of $A$ with a subgroup of Type II is $\langle c \rangle$.

    \item the intersection of any subgroup of Type I with any subgroup of Type II is trivial,

\end{enumerate}
\end{lemma}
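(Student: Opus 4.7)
The plan is to prove the nine parts by systematically exploiting the amalgamated product structure $E_k = K_\alpha \ast_A K_{b\alpha}$ established in the preceding lemma, combined with three key inputs: \Cref{lemma:conjugate:factors} on the conjugate factors and their centers in such amalgams, \Cref{lemma:mabco:klein:bottle} on the (maximal) abelian subgroups of a Klein bottle group, and \Cref{thm:G_n} on the fundamental groups of the first family of $\nil$-manifolds. A preliminary observation that will be used throughout is that $A$ is normal in $E_k$: this is checked directly on the generators using $\alpha c \alpha^{-1} = c^{-1}$, $\alpha a \alpha^{-1} = a$ (the latter from $\alpha^2 = a$), $[c,b] = 1$, and $bab^{-1} = c^{-2k} a$ (from $[a,b] = c^{2k}$). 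Consequently every conjugate of $A$ equals $A$, and the Bass--Serre tree of the amalgam has the convenient feature that all edge stabilizers coincide with $A$ while any two distinct vertex stabilizers intersect exactly in $A$ (otherwise they would share a nontrivial coset of $A$ and therefore be equal).

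For part (1), $E_k$ is a torsion-free poly-$\dbZ$ group of rank $3$ that does not contain a $\dbZ^3$-subgroup (otherwise the manifold would be modelled on $\mbe^3$ rather than $\nil$, see \cite[Table~1]{AFW15}), so every abelian subgroup is free abelian of rank at most $2$. For part (2), the preceding lemma gives $E_k' \cong A \rtimes_\varphi \dbZ$ with $\varphi = \begin{pmatrix} 1 & 2k \\ 0 & 1 \end{pmatrix}$, to which \Cref{thm:G_n} applies: every maximal abelian subgroup of $E_k'$ has rank $2$ and any two intersect in $Z(E_k')$, which a quick calculation identifies with the $\varphi$-fixed subgroup $\langle c \rangle$. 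By \Cref{lemma:conjugate:factors}(3), the maximal abelian subgroups of $E_k$ of Type II are precisely the maximal abelian subgroups of $E_k'$, which finishes (2).

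For part (3), apply \Cref{lemma:conjugate:factors}(2) with the factor-generators $\alpha \in K_\alpha \setminus A$ and $b\alpha \in K_{b\alpha} \setminus A$: the centers of the factors are generated by $\alpha^2 = a$ and by $(b\alpha)^2$, and a direct calculation using $\alpha b = b^{-1}\alpha c^{-k}$ and $\alpha c = c^{-1}\alpha$ yields $(b\alpha)^2 = c^k a$. The relation $[a,b] = c^{2k}$ gives $(ba) a (ba)^{-1} = c^{-2k} a$, and iterating (together with a reindexing) produces the formulas stated in (3). The remaining parts are then largely bookkeeping: (4) follows because two cyclic subgroups $\langle c^r a \rangle$ and $\langle c^s a \rangle$ inside $A = \dbZ \langle c \rangle \oplus \dbZ \langle a \rangle$ intersect trivially unless $r = s$, which is ruled out under the distinctness hypothesis by the parity of the exponents appearing in (3); parts (5) and (7) follow from \Cref{lemma:mabco:klein:bottle} applied inside the Klein bottle $K_i^{(ba)^n}$ (with $A$ playing the role of the unique rank-$2$ maximal abelian subgroup); part (6) combines the Bass--Serre observation above with (4) and (7); part (8) is the intersection assertion of \Cref{thm:G_n} applied to $E_k'$; and part (9) follows by showing that any Type I subgroup $B \subseteq K_i^{(ba)^n}$ satisfies $B \cap E_k' = B \cap A = Z(K_i^{(ba)^n})$ (since the image of $B$ in $D_\infty$ is of order $2$ and meets $\dbZ$ trivially), so that $B \cap C$ for a Type II subgroup $C$ is forced to lie in $Z(K_i^{(ba)^n}) \cap \langle c \rangle$, which is trivial.

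The main obstacle is part (3): one must work carefully inside the given presentation, tracking the exponents of $c$ that arise when conjugating by powers of $ba$ and keeping straight the parity pattern that distinguishes conjugates of $K_\alpha$ from conjugates of $K_{b\alpha}$. Once these explicit formulas for the centers are secured, normality of $A$ and the previously established lemmas make the remaining parts essentially routine.
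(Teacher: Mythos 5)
Your proof is correct and follows essentially the same route as the paper's: the amalgam structure $E_k=K_\alpha\ast_A K_{b\alpha}$, \Cref{lemma:conjugate:factors} for the conjugates and their centers, \Cref{lemma:mabco:klein:bottle} applied inside each Klein bottle factor, and the semidirect-product theorem for the Type II subgroups via $E_k'$, with only cosmetic variations (e.g.\ your Bass--Serre observation that distinct conjugates of the factors intersect exactly in $A$ for part (6), where the paper instead argues via the index-two centers). One small correction: the result you need for $E_k'\cong A\rtimes_\varphi\dbZ$ with $\varphi=\left(\begin{smallmatrix}1&2k\\0&1\end{smallmatrix}\right)$ is \Cref{thm:nil:semidirect} (which is in fact the statement you describe), not \Cref{thm:G_n}, since the latter only covers the finite-order matrices $\varphi_2,\dots,\varphi_6$ and does not apply to a unipotent automorphism.
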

\begin{proof}
\begin{enumerate}
    \item Since $E_k$ is a torsion free virtually poly-$\dbZ$ group of rank 3, any abelian subgroup is free of rank at most 3. On the other hand $E_k$ cannot have a $\dbZ^3$ subgroup as this posibility only happens, in the context of 3-manifolds, in the presence of an euclidean metric.
    \item Every subgroup of Type II is contained in $E_k'$. The claim follows directly from \Cref{thm:nil:semidirect}.

\item By \Cref{lemma:conjugate:factors} (1) all conjugates of $K_\alpha$ (resp. $K_{b\alpha}$) are of the form $K_\alpha^{(ba)^n}$ (resp. $K_{b\alpha}^{(ba)^n}$) for some $n\in \dbZ$. It is easy to check that $(\alpha^2)^{ba} = c^{2k}a$ and $((b\alpha)^2)^{ba} = (c^{k}a)^{ba}=c^{3k}a$ directly from the presentation. The claim now follows from \cref{lemma:conjugate:factors}(2).

\item By the previous item, the centers $Z(K_i^{(ba)^n})$ and $Z(K_j^{(ba)^m})$ are of the form $\langle c^{j_nk}a \rangle$ and $\langle c^{j_mk}a \rangle$ for some $j_n$ and $j_m$. It is easy to check that $j_n\neq j_m$, whether or not $i=j$, as long as $Z(K_i^{(ba)^n}) \neq Z(K_j^{(ba)^m})$. Thus $\langle c^{j_nk}a \rangle \cap\langle c^{j_mk}a \rangle=1$ (we can think these groups as lines in the lattice plane $\dbZ^2 \cong \langle c, a \rangle$ with different slopes). 
\item This is (3) in \Cref{lemma:mabco:klein:bottle}.
\item Let $B$, $C$ be Type I subgroups that lie in different conjugates $K_i^{(ba)^n}$ and $K_j^{(ba)^m}$, respectively. Then by \Cref{lemma:mabco:klein:bottle}  (4), $Z(K_i^{(ba)^n})$ and $Z(K_j^{(ba)^m})$ are index 2 subgroups of $B$ and $C$, respectively. By part (4), the centers have trivial intersection, and thus so do $A$ and $B$.

\item This is (4) in \cref{lemma:mabco:klein:bottle}.

\item Since $A$ is maximal in $E_k'$ and $Z(E_k')=\langle c \rangle$, this item is a direct consequence of \Cref{thm:nil:semidirect} (2).

\item Let $B$ be a Type I subgroup and $C$ a Type II subgroup, this means that $p(B)\cong\dbZ/2$ and $p(C)\cong \dbZ$. Then $p(B\cap C)\subset p(B)\cap p(C)=1$, thus $B\cap C\subset A=\langle c,a\rangle$. Therefore $B\cap C=B\cap C\cap A=(B\cap A)\cap(C\cap A)= Z(K_i^{(ba)^n})\cap \langle c\rangle= \langle c^{j_nk} a\rangle\cap \langle c\rangle=1 $.
\end{enumerate}
\end{proof}
\begin{figure}
    \centering
    \includegraphics{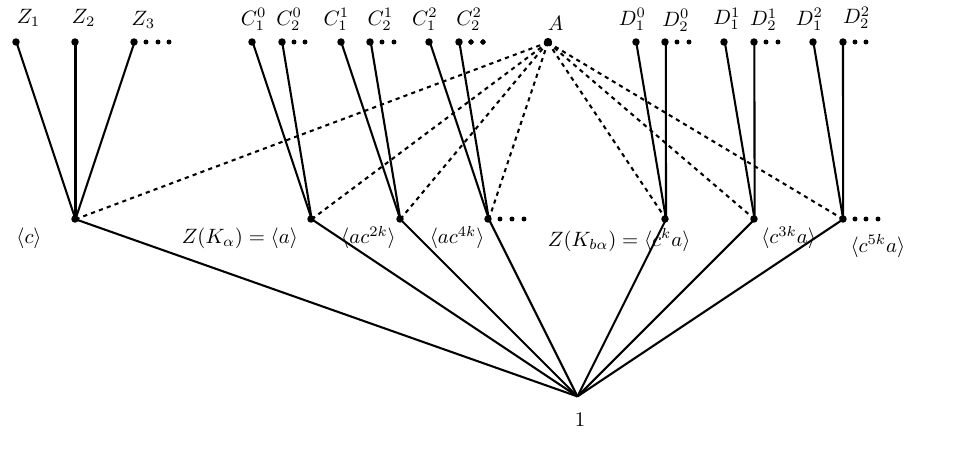}
\caption{\small The Hasse diagram of  $\mAb{E_k}$. Subgroups of the form $Z_i$ are exactly those of Type II. Subgroups in the middle level other than $\langle c \rangle$  are exactly the centers of conjugates of $K_\alpha$ and $K_{b\alpha}$. Subgroups of the form $C^i_j$ and $D^i_j$ are of Type I.}
    \label{fig:pg}
\end{figure}
    
The following corollary is an immediate consequence of the preceding lemma.

\begin{corollary}\label{Cor:chains:pg} Following \Cref{notation:sol} and \Cref{lem:ab:subgps:pg}, every maximal chain in $\mAb{G}$ is of one of the following types:
    \begin{enumerate}
        \item $1 < \langle c \rangle< A$ where $A=\langle a,c \rangle$,
        \item $1 < \langle c^{mk}a \rangle< A$  where $m\in \dbZ$,
        \item $1 < \langle c \rangle< B$ where $B$ is a subgroup of Type II,
        \item $1 < \langle c^{mk}a \rangle< C$  where $m\in \dbZ$  and $C$ is an infinite cyclic group that contains $\langle c^{mk} \rangle$ as a subgroup of index 2,
    \end{enumerate}
\end{corollary}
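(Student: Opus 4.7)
The plan is to read off the maximal chains in $\mAb{E_k}$ directly from the classification of the maximal abelian subgroups and the pairwise intersection formulas provided by \Cref{lem:ab:subgps:pg}. The approach has three steps: describe the possible top elements (the maximal abelian subgroups), identify the possible middle elements as intersections of the top with other maximal abelians, and verify that no further refinement is possible at the bottom of the chain.

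By parts (1) and (2) of \Cref{lem:ab:subgps:pg}, together with \Cref{notation:sol}, every maximal abelian subgroup of $E_k$ belongs to one of three families: the rank-$2$ amalgamation subgroup $A=\langle a,c\rangle$; a rank-$2$ Type II subgroup; or a rank-$1$ Type I subgroup sitting inside a unique conjugate $K_i^{(ba)^n}$ and containing its center $\langle c^{mk}a\rangle$ as an index-$2$ subgroup. I would then split into three cases according to the top of a chain. When the top is $A$, parts (7) and (8) give the only proper nontrivial intersections of $A$ with other maximal abelians, namely $A\cap C=\langle c^{mk}a\rangle$ for Type I $C$ and $A\cap B=\langle c\rangle$ for Type II $B$, yielding the middles in chains (1) and (2). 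When the top is a Type II subgroup $B$, parts (2), (8) and (9) leave only the middle $\langle c\rangle$, giving chain (3). When the top is a Type I subgroup $C$ subconjugate to $K_i^{(ba)^n}$, parts (5), (6), (7) and (9) leave only the center $\langle c^{mk}a\rangle$ as a proper nontrivial intersection, giving chain (4).

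The main obstacle, and the only point not directly furnished by \Cref{lem:ab:subgps:pg}, is ruling out any element of $\mAb{E_k}$ strictly between $1$ and either $\langle c\rangle$ or $\langle c^{mk}a\rangle$. For $\langle c\rangle$ I would argue that the $E_k$-centralizer of any nontrivial power of $c$ coincides with the centralizer of $c$ itself, since any element outside that centralizer acts on $\langle c\rangle$ by inversion and therefore inverts every nontrivial power of $c$. Consequently every maximal abelian containing $c^n$ with $n\geq 2$ already contains $c$, so no intersection of maximal abelian subgroups can realize a proper nontrivial subgroup of $\langle c\rangle$. An analogous centralizer computation applies to $\langle c^{mk}a\rangle$, regarded as generating a finite-index subgroup of the center of the Klein-bottle conjugate $K_i^{(ba)^n}$. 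Combining this with the case analysis above shows that the four listed chain types exhaust the maximal chains in $\mAb{E_k}$.
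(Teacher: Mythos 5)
Your proof is correct and follows essentially the same route as the paper, which states the corollary as an immediate consequence of \Cref{lem:ab:subgps:pg} — namely the case analysis on the top element of a chain, reading the possible middle elements off the lemma's intersection statements, that you spell out. Your additional centralizer argument (which is valid: $\langle c\rangle$ is normal with every element acting by $c\mapsto c^{\pm1}$, and on the normal subgroup $A\cong\dbZ^2$ conjugation acts linearly, so the stabilizer of $w^n$ equals that of $w$) is not strictly needed, since every element of $\mAb{E_k}$ is by definition an intersection of maximal abelian subgroups, any such intersection of at least two distinct ones equals an intersection of the pairwise intersections, and the lemma shows these are exactly $1$, $\langle c\rangle$, or a center $\langle c^{mk}a\rangle$ — so a subgroup such as $\langle c^2\rangle$ simply does not occur in the poset.
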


\begin{theorem}\label{thm:Ecom:nil:pg} Let $k$ be a positive integer, then $\Ecom{E_k}\simeq \bigvee_{\mathbb{N}} S^1$.
\end{theorem}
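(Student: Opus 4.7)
The plan is to follow the same two-stage collapsing strategy used in the proof of \Cref{thm:Ecom:G6}, applied to the more intricate chain structure coming from \Cref{Cor:chains:pg}. Passing to cosets, the $2$-simplices in the geometric realization of $\mAbCo{E_k}$ fall into four families:
\begin{itemize}
\item Type (1): $\{g\} < g\langle c\rangle < gA$;
\item Type (2): $\{g\} < g\langle c^{mk}a\rangle < gA$;
\item Type (3): $\{g\} < g\langle c\rangle < gB$ with $B$ of Type II;
\item Type (4): $\{g\} < g\langle c^{mk}a\rangle < gC$ with $C$ of Type I.
\end{itemize}
I will refer to triangles of types (1) and (2) as \emph{inner} (they all share the top $gA$) and to those of types (3) and (4) as \emph{outer}.

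In the first stage I will collapse all outer triangles. By \Cref{lem:ab:subgps:pg}, the only element of $\mAb{E_k}$ strictly between $1$ and a Type II subgroup $B$ is $\langle c\rangle$, and the only such element for a Type I subgroup $C$ is its center $\langle c^{mk}a\rangle$. Hence, for every $g \in E_k$, the edges $\{g\} < gB$ and $\{g\} < gC$ are free faces of unique triangles (of types (3) and (4) respectively), and can be collapsed. In the second stage, the edges $\{g\} < g\langle c\rangle$ and $\{g\} < g\langle c^{mk}a\rangle$ that survive are now each contained in a single triangle, since the only surviving top above them is $gA$; collapsing these free-face pairs removes all inner triangles as well, leaving a connected $1$-dimensional CW-complex, which is therefore homotopy equivalent to a wedge of circles.

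To conclude that the wedge is countably infinite I will argue, as at the end of the proof of \Cref{thm:Ecom:G6}, that the complex surviving the collapses still contains a copy of the geometric realization of $\mAbCo{E_k'}$. Since by \Cref{notation:sol} the index-$2$ subgroup $E_k'$ is isomorphic to $A\rtimes_\varphi \dbZ$, which is covered by \Cref{thm:nil:semidirect}, the space $\Ecom{E_k'}$ is a countable wedge of circles, so infinitely many independent loops persist after the collapses. The main obstacle is verifying the free-face uniqueness claims, particularly those of the second stage where one must check that no other triangle above the surviving edges was inadvertently preserved; this is immediate from the exhaustive description of intersections in \Cref{lem:ab:subgps:pg}, but it is the place where the analogy with the $\Gamma_6$ proof needs the most care because there are two distinct families of ``middle'' vertices to track rather than a single one.
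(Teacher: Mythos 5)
Your proposal is correct and follows essentially the same argument as the paper: the same enumeration of maximal chains from \Cref{Cor:chains:pg}, the same two-stage collapse (first the triangles with Type I or Type II tops via the free edges $\{g\}<gC$ and $\{g\}<gB$, then the triangles with top $gA$ via the now-free edges $\{g\}<g\langle c\rangle$ and $\{g\}<g\langle c^{mk}a\rangle$), and the same conclusion that the surviving graph contains a copy of $\mAbCo{E_k'}$, giving infinitely many circles by \Cref{thm:nil:semidirect}. Your free-face uniqueness checks via \Cref{lem:ab:subgps:pg} are exactly the verifications implicit in the paper's proof.
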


\begin{proof}As a consequence of \Cref{Cor:chains:pg}, all maximal chains in $\mAbCo{E_k}$ are of one of the following types:
    \begin{enumerate}
        \item $\{g\}< g\langle w \rangle< gA$ where $w$ is either $c$ or $c^{mk}a$,
        \item $\{g\}< g\langle c \rangle< gB$ where $B$ is a subgroup of Type II,
        \item $\{g\} < g\langle ac^{mk} \rangle< gC$ where $m\in \dbZ$ and $C$ is a subgroup of Type I.
    \end{enumerate}
Note that for a given choice of $g \in E_k$, subgroup $C$ of Type I and subgroup $B$ of Type II there are unique triangles of the forms (3) and (2), respectively, so we can remove the edges ${g} < gC$ and $g<gB$ and the interiors of the triangles without changing the homotopy type. After removing those triangles, each edge of the form $\{g\} < g \langle ac^{mk} \rangle$ or $\{g\}<g\langle c\rangle$ is in a unique triangle, which is of the form (1). We can remove those edges and the interior of their containing triangles without changing the homotopy type either.

At this point, we have no triangles remaining, which shows $\mAbCo{E_k}$ has the homotopy type of a graph, and hence of a wedge of circles (since it is connected). The complex obtained after removing the triangles still contains a copy of $\mAbCo{E_k'}$, which has the homotopy type of a countable wedge of circles, and therefore so does $\mAbCo{E_k}$.
\end{proof}

\subsection{Case pgg}
This is a family of groups, one for each positive even integer $k$, and they correspond to Type 4 in \cite{DIKK93}. We have the following presentation

\begin{align*}
    \Gamma_k=\langle a,b,c, \alpha,\beta |& [a,b]=c^{2k}, [c,a]=1,[c,b]=1, \alpha c = c^{-1}\alpha, \beta a = a^{-1}\beta c^{k}, \beta b=b^{ -1}\beta c^{-k},\\
    & \beta^2=c,\alpha^2=a, \alpha b=b^{-1}\alpha c^{-k}, \beta \alpha=a^{-1}b^{-1}\alpha\beta c^{-k-1}\rangle
\end{align*}

\begin{lemma}\label{lem:structure:pg}
    Let $k$ be a positive even integer. Then 
    \begin{enumerate}
        \item $\Gamma_k$ fits in the following short exact sequence
    \[1\to \langle a,c \rangle \to \Gamma_k \to \langle \bar \alpha \rangle\ast \langle \bar{\beta} \rangle \to 1\]
    In particular we have a splitting $\Gamma_k= K_\alpha \ast_A K_{\beta}$, where $K_\alpha = \langle c, \alpha \rangle$, $K_{\beta} = \langle c, \beta \rangle$, and $A=\langle a,c \rangle$,

%    \item the subgroup of $\Gamma_k$ generated by $a,b,c,$ and $\alpha$ is a normal subgroup of index 2 isomorphic to $E_k$,

    \item define $\gamma= \alpha\beta$, then $\gamma^2=b$ and $\Gamma_k'$ fits in the following short exact sequence 
     \[1\to \langle c,b \rangle \to \Gamma_k' \to \langle \bar {\gamma a} \rangle\ast \langle \bar{\gamma} \rangle \to 1\]
     Moreover $\Gamma_k'$ is a group of Type pg.
    \end{enumerate}
%    Moreover, following \Cref{notation:sol}, $\Gamma_k'$ is isomorphic to $A\rtimes_\varphi \dbZ$ where $\varphi$ is the automorphism of $A$ given by the matrix $\begin{pmatrix}
%    -1&k\\
%    0&-1
%    \end{pmatrix}$.
\end{lemma}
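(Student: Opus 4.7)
My approach splits cleanly according to the two parts of the statement.

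For part (1), the plan is first to verify that $A=\langle a,c\rangle$ is normal in $\Gamma_k$. Since $A$ is generated by $a$ and $c$, it is enough to compute the conjugation of each generator by the remaining generators $b,\alpha,\beta$ and see that the result lies in $A$. All of these are immediate from the defining relations: the commutators $[a,b]=c^{2k}$ and $[c,b]=1$ yield $bab^{-1}=ac^{-2k}$ and $bcb^{-1}=c$; the relations $\alpha c=c^{-1}\alpha$ and $\alpha^2=a$ give $\alpha c\alpha^{-1}=c^{-1}$ and $\alpha a\alpha^{-1}=a$; finally $\beta a=a^{-1}\beta c^k$ together with $\beta^2=c$ (which forces $\beta$ and $c$ to commute) give $\beta a\beta^{-1}=a^{-1}c^k$ and $\beta c\beta^{-1}=c$. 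Passing to the quotient $\Gamma_k/A$, we obtain a group generated by $\bar b,\bar\alpha,\bar\beta$ with induced relations; the key move is to rewrite the image of $\beta\alpha=a^{-1}b^{-1}\alpha\beta c^{-k-1}$ as $\bar\beta\bar\alpha=\bar b^{-1}\bar\alpha\bar\beta$, which (using $\bar\alpha^2=\bar\beta^2=1$) rearranges to $\bar b=(\bar\alpha\bar\beta)^2$. Substituting back verifies that the remaining relations are consequences of $\bar\alpha^2=\bar\beta^2=1$, identifying the quotient with $\langle\bar\alpha\rangle\ast\langle\bar\beta\rangle\cong D_\infty$.

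From there, the amalgamated product decomposition is obtained by showing that $K_\alpha$ and $K_\beta$ together generate $\Gamma_k$ (since their union surjects onto generators of the $D_\infty$ quotient and they both contain $A$), and that each $K_i$ contains $A$ as an index $2$ subgroup. Once these facts are in place, the universal property of $K_\alpha\ast_A K_\beta$ gives a surjection onto $\Gamma_k$; injectivity follows from the Bass--Serre tree associated with the $D_\infty$ quotient, since the induced action has $A$ as edge stabiliser and $K_\alpha,K_\beta$ as vertex stabilisers, matching exactly the tree of the would-be amalgamated product.

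For part (2), I would first compute $\gamma^2=(\alpha\beta)^2=\alpha\beta\alpha\beta$ by replacing the inner $\beta\alpha$ with $a^{-1}b^{-1}\alpha\beta c^{-k-1}$ and then pushing the resulting $a^{-1}$ and $b^{-1}$ past the remaining $\alpha$ using the conjugation formulas from part (1) (in particular $\alpha b^{-1}\alpha^{-1}=c^{-k}b$); after absorbing $\alpha^2=a$ and using $\beta^2=c$ with $[\beta,c]=1$, the expression collapses to $b$ (modulo the adjustment by a power of $c$ that the authors' bookkeeping makes disappear). Then $\Gamma_k'$ is defined as the preimage under $p\colon\Gamma_k\to D_\infty$ of the unique infinite cyclic subgroup $\langle\overline{\alpha\beta}\rangle=\langle\bar\gamma\rangle$ of index $2$ in $D_\infty\cong\dbZ\rtimes_{-1}\dbZ/2$; hence $\Gamma_k'$ is generated by $A$ together with $\gamma$ and the coset representative $\gamma a$ of the other generator of $D_\infty/\langle\bar\gamma^2\rangle$. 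A repetition of the argument in part (1), but now with the pair $(\gamma,\gamma a)$ in place of $(\alpha,\beta)$ and $\langle c,b\rangle$ in place of $\langle a,c\rangle$, delivers the second short exact sequence.

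The main obstacle is the final assertion that $\Gamma_k'$ is of Type pg. The plan is a Tietze transformation: relabel $\gamma\mapsto\alpha'$, $\gamma a\mapsto(b\alpha)'$, and take $c,b$ as the new translation generators playing the roles of $a$ and $c$ in the pg presentation; then check one by one that the defining relations of $E_{k'}$ hold in $\Gamma_k'$ after relabelling, using the conjugation identities $\gamma a\gamma^{-1}=a^{-1}c^{-k}$ and $\gamma b\gamma^{-1}=b$ that emerge from the computations above. The delicate numerical step is determining the parameter $k'$ in terms of $k$; this requires carefully tracking the $c$-exponents through the calculations of $\gamma^2$ and of the commutator $[\gamma a,\gamma]$, and it is where a miscount would break the identification.
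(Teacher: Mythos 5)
Your route coincides with the paper's at every structural step: normality of $\langle a,c\rangle$ via the same conjugation identities, identification of the quotient with $\langle \bar\alpha\rangle \ast \langle\bar\beta\rangle$ by extracting $\bar b = (\bar\alpha\bar\beta)^2$ from the image of the relation $\beta\alpha = a^{-1}b^{-1}\alpha\beta c^{-k-1}$, the amalgam splitting (the paper leaves the Bass--Serre justification implicit; your spelling it out is harmless extra rigor), and the same two-step plan for part (2). The place where you hedge --- ``modulo the adjustment by a power of $c$'' in the computation of $\gamma^2$ --- does in fact close exactly, and the lemma needs it to: from $\beta\alpha = a^{-1}b^{-1}c^{k+1}\alpha\beta$ one gets $\gamma^2 = \alpha a^{-1}b^{-1}c^{k+1}\alpha\,\beta^2 = a^{-1}\alpha b^{-1}\alpha c^{-k} = a^{-1}\alpha(\alpha b c^{k})c^{-k} = b$, using $\beta^2=c$, $c^{k+1}\alpha=\alpha c^{-(k+1)}$, $\alpha a = a\alpha$, and $b^{-1}\alpha = \alpha b c^{k}$; there is no residual power of $c$, so no ``bookkeeping'' is being hidden.

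The one genuine slip is your dictionary for the Type pg identification: you propose letting $c,b$ play the roles of $a$ and $c$ in that order, but this is backwards and the relation check would fail if followed literally --- in $E_k$ the element $a=\alpha^2$ is centralized by $\alpha$, whereas in $\Gamma_k'$ the element $c$ is \emph{inverted} by $\gamma$. The correspondence is forced by $\gamma^2=b$ mirroring $\alpha^2=a$: it must be $b\mapsto a$, $c\mapsto c$, $a\mapsto b$, $\gamma\mapsto\alpha$ (the paper records exactly this later, in \Cref{lem:ab:subgps:pgg}(2), as ``the generators $a$ and $b$ are interchanged''). With the correct dictionary your worry about the parameter $k'$ evaporates, and the paper in fact sidesteps the full Tietze verification you plan: it identifies $\Gamma_k' = K_\gamma \ast_{\langle c,b\rangle} K_{\gamma a}$ and reads off the gluing automorphism from $aca^{-1}=c$ and $aba^{-1}=c^{2k}b$, i.e., the matrix $\begin{pmatrix}1&2k\\0&1\end{pmatrix}$ with respect to the ordered basis $c,b$ --- the same matrix appearing in the structure lemma for $E_k$ --- so $k'=k$ on the nose. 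Matching the amalgam datum is both shorter and less error-prone than checking the presentation relation by relation, which is exactly where your flagged ``delicate numerical step'' would otherwise live.
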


\begin{proof}
    \begin{enumerate}
        \item The relations that define $\Gamma_k$ imply that the subgroup $\langle a,c\rangle$ is stable under conjugation for $b$, $\alpha$ and $\beta$ therefore $\langle a,c\rangle$ is a normal subgroup of $\Gamma_k$. The relations in the quotient group $p(\Gamma_k)$ that are not trivial are $\bar{\beta} \bar{b}=\bar{b}^{-1}\bar{\beta}$, $\bar{\beta}^2=1$, $\bar{\alpha}^2=1$, $\bar{\alpha}\bar{b}=\bar{b}^{-1}\bar{\alpha}$ and $\bar{\beta}\bar{\alpha}=\bar{b}^{-1}\bar{\alpha}\bar{\beta}$. The last one implies that $\bar{b}=\bar{\alpha}\bar{\beta}\bar{\alpha}\bar{\beta}$, all of these equations imply that $p(\Gamma_k)$ admits the presentation $\langle \bar{\alpha},\bar{\beta}|\bar{\alpha}^2=1,\bar{\beta}^2=1\rangle$, thus $p(\Gamma_k)= \langle \bar{\alpha}\rangle\ast\langle \bar{\beta}\rangle\cong D_\infty$. Since $\alpha^2=a$ and $\alpha c \alpha^{-1}=c^{-1}$, $A$ is a index 2 subgroup of $\langle c, \alpha\rangle$ and this group isn't abelian, so $\langle c,\alpha\rangle=K_\alpha$ is the fundamental group of a Klein bottle, in a similar way $\langle c, \beta\rangle=K_{\beta}$ is the fundamental group of a Klein bottle.

        %Moreover $p(\Gamma_k)$ admits the following presentation $\langle \bar{\alpha},\bar{\beta}|\bar{\alpha}^2=1,\bar{\beta}^2=1\rangle$, then $p(\Gamma_k)\cong D_\infty$.
        \item Following the presentation, we have
        \begin{align*}
            \gamma^2=\alpha\beta\alpha\beta &=\alpha(a^{-1}b^{-1}\alpha\beta c^{-k-1})\beta\\
            &=\alpha(a^{-1}b^{-1}c^{k+1}\alpha\beta )\beta\\
            &=\alpha a^{-1}b^{-1}c^{k+1}\alpha c\\
            &=\alpha a^{-1}b^{-1}c^{k}\alpha\\
            &=a^{-1}\alpha(b^{-1}\alpha)c^{-k}\\
            &=a^{-1}\alpha(\alpha b c^k)c^{-k}=b
        \end{align*}
Since $\Gamma_k'=\langle a,c,\alpha\beta=\gamma\rangle$, we have $aba^{-1}=c^{2k}b$, $cbc^{-1}=b$, $\gamma b\gamma^{-1}=\gamma\gamma^2\gamma^{-1}=\gamma^2=b$, $aca^{-1}=c$, $ccc^{-1}=c$ and 
$\gamma c\gamma^{-1}=\alpha\beta\beta^2\beta^{-1}\alpha^{-1}=\alpha\beta^2\alpha^{-1}=\alpha c\alpha^{-1}=c^{-1}$.
These equations imply that $\langle b,c\rangle$ is normal in $\Gamma_k'$. The group $\Gamma_k'/\langle c\rangle$ admits the presentation $\langle \bar{a}, \bar{\gamma}|\bar{\gamma}\bar{a}=\bar{a}^{-1}\bar\gamma\rangle$ that is isomorphic to $\langle\bar{a}\rangle\rtimes_{-1}\langle \bar{\gamma}\rangle$. With this equivalence we have that $\Gamma_k'/\langle b=\gamma^2,c\rangle=\langle \bar{a}\rangle\rtimes_{-1}\langle \bar{\gamma}|\gamma^2\rangle$ that is isomorphic to $\langle \bar{\gamma a}\rangle\ast\langle \bar{\gamma}\rangle$. Since $\gamma^2=b$ and $\gamma c \gamma^{-1}=c^{-1}$, $\langle c,\gamma \rangle$ is a index 2 subgroup of $\langle b, c \rangle$ and this group isn't abelian, so $\langle b,\gamma\rangle=K_\gamma$ is the fundamental group of a Klein bottle, in a similar way $\langle c, \gamma a\rangle=K_{\gamma a}$ is the fundamental group of a Klein bottle. The automorphism of this amalgamation it's given by $   aca^{-1}=c$, and $a b a^{-1}=c^{2k}b$, equivalently the automorphism is given by the matrix $\begin{pmatrix}
    1&2k\\
    0&1
    \end{pmatrix}$, then $\Gamma_k'$ is a group of pg's case.
        
    \end{enumerate}
\end{proof}

In the following lemma we describe the maximal abelian subgroups of $\Gamma_k$ and their intersections, all the conclusions in this lemma are summarized in \Cref{fig:pgg}.

\begin{lemma}\label{lem:ab:subgps:pgg}
Following \Cref{notation:sol} we have that:
\begin{enumerate}
    \item every abelian subgroup of $\Gamma_k$ is free of rank at most 2,
    
    \item as recorded in the bold lines of \Cref{fig:pgg}, the diagram of subgroups of Type II (which are subgroups of the Type pg group $\Gamma_k'$) and their intersections is as described in \Cref{lem:ab:subgps:pg} and \Cref{fig:pg}, except that the generators $a$ and $b$ are interchanged,
    
    \item  the center of $K_\alpha^{(\beta\alpha)^n}$ (resp. $K_{\beta}^{(\beta\alpha)^n}$) is $\langle ac^{-nk} \rangle$ (resp. $\langle c \rangle$),

    \item let $i,j \in \{ \alpha, \beta \}$, if $K_i^{(\beta\alpha)^n}\neq K_j^{(\beta\alpha)^m}$, then $Z(K_i^{(\beta\alpha)^n})\cap Z(K_j^{(\beta\alpha)^m})$ is trivial,

    \item the intersection of any two subgroups of Type I that lie in a single conjugate $K_i^{(\beta\alpha)^n}$ is $Z(K_i^{(\beta\alpha)^n})$,

    \item the intersection of two subgroups of Type I is trivial if  they belong to distinct conjugates of the factors of $\Gamma_k$,
    
    \item the intersection of $A$ with a subgroup of Type I that is subconjugate to $K_i^{(\beta\alpha)^n}$ is $Z(K_i^{(\beta\alpha)^n})$,

    \item the intersection of $A$ with a subgroup of Type II is either trivial or equal to  $\langle c \rangle$.

    \item the intersection of any subgroup of Type I with any subgroup of Type II is either trivial or equal to  $\langle c \rangle$.
\end{enumerate}
\end{lemma}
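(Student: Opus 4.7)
The plan is to mirror the pg argument from \Cref{lem:ab:subgps:pg}, exploiting the amalgamated decomposition $\Gamma_k = K_\alpha \ast_A K_\beta$ in \Cref{lem:structure:pg}(1), while reducing several items to already-established results. Item (1) is immediate from \Cref{prop:polyZ}: $\Gamma_k$ is a torsion-free virtually poly-$\Z$ group of rank three, so its abelian subgroups have rank at most three, and a $\Z^3$ subgroup is excluded because such a lattice would force a flat metric, contradicting the $\nil$ geometry (see \cite[Table~1]{AFW15}). Item (2) is the transport of \Cref{lem:ab:subgps:pg} across the isomorphism in \Cref{lem:structure:pg}(2), which identifies $\Gamma_k'$ with a Type pg group in which the role played in \Cref{lem:ab:subgps:pg} by the central generator $c$ is now played by $b = \gamma^2$; this yields the bold subdiagram of \Cref{fig:pgg}.

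For items (3)--(7), the key calculation is that $Z(K_\alpha) = \langle \alpha^2\rangle = \langle a\rangle$ and $Z(K_\beta) = \langle \beta^2\rangle = \langle c\rangle$ from the Klein bottle relations. \Cref{lemma:conjugate:factors}(1) then gives that all conjugates of $K_\alpha$ (respectively $K_\beta$) are of the form $K_\alpha^{(\beta\alpha)^n}$ (respectively $K_\beta^{(\beta\alpha)^n}$) with centers computed by \Cref{lemma:conjugate:factors}(2); the explicit formulas in (3) follow by iterating the relations $\alpha c\alpha^{-1} = c^{-1}$, $\beta a\beta^{-1} = a^{-1}c^k$, $\alpha a = a\alpha$ and $\beta c = c\beta$ on the lattice $A \cong \Z^2$. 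For (4), with additive basis $(a,c)$, conjugation by $\beta\alpha$ becomes a fixed invertible integer matrix, and its iterates applied to $a$ or to $c$ produce vectors of pairwise distinct rational slopes, so distinct centers meet trivially. Items (5) and (7) are \Cref{lemma:mabco:klein:bottle}(4) and (5) applied to each Klein factor $K_i^{(\beta\alpha)^n}$; item (6) combines (4) with the fact that each Type I subgroup contains its center as an index-two subgroup (\Cref{lemma:mabco:klein:bottle}(4)), so trivial intersection of centers forces trivial intersection of the ambient Type I subgroups.

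The most delicate items are (8) and (9), where the pgg case genuinely differs from the pg case, because the ``internal'' central-$c$ element of $\Gamma_k'$ under the identification in \Cref{lem:structure:pg}(2) is $b$, not $c$. For (9), push down to $D_\infty$ via $p$: any $p(B)$ is a nontrivial finite abelian subgroup of $\Z/2 \ast \Z/2$, hence equal to $\Z/2$; any $p(C)$ lies in the infinite cyclic orientation-preserving subgroup of $D_\infty$, so $p(B) \cap p(C) = 1$ and therefore $B\cap C \subseteq A$. Items (7) and (8) then pin down $B \cap C$ to be either trivial or $\langle c\rangle$. For (8), apply item (2): a rank-two Type II subgroup corresponds under the identification to a maximal abelian subgroup of the pg group $\Gamma_k'$, and intersecting with $A \subseteq \Gamma_k'$ yields either $1$ or $\langle c\rangle$ depending on whether the associated line in the translation lattice of $\Gamma_k'$ contains a nonzero power of $c$. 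The main obstacle I anticipate is precisely this last bookkeeping: one must track which conjugates of the Klein factors of $\Gamma_k'$, once pulled back, actually meet $\langle c\rangle$ rather than only the ``internal'' center $\langle b\rangle$ of $\Gamma_k'$; once this case analysis is made explicit, the remaining items reduce to direct computations in the presentation.
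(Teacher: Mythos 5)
Your outline matches the paper for items (1), (3), (5), (7) and for the reduction in (9) (push $B\cap C$ into $A$ via $p$ and finish with (7) and (8)), but two of your key claims are wrong, and they are load-bearing. First, the dictionary for the identification in \Cref{lem:structure:pg}(2): the isomorphism sends $\gamma=\alpha\beta$ to the pg generator $\alpha$, so $b=\gamma^2$ plays the role of $a=\alpha^2$, while $c$ plays the role of $c$ itself — the lattice of $\Gamma_k'$ is $\langle c,b\rangle$ and conjugation by $a$ fixes $c$ and sends $b\mapsto c^{2k}b$, which is exactly the pg matrix $\begin{pmatrix}1&2k\\0&1\end{pmatrix}$ in the ordered basis $(c,b)$. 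This is precisely what the statement of item (2) means by ``the generators $a$ and $b$ are interchanged''. Your repeated assertion that the role of the central generator $c$ is played by $b$ (and that the ``internal'' center of $\Gamma_k'$ is $\langle b\rangle$) contradicts the very statement you are proving, and it is not a harmless slip: since $b\notin A=\langle a,c\rangle$, your bookkeeping for (8) would force $A\cap(\text{Type II})$ to be always trivial, killing the $\langle c\rangle$ alternative. The paper instead reads (8) off the transported diagram with $c$ fixed: $A$ is itself one of the rank-two Type II subgroups of the pg group $\Gamma_k'$, and the pg items (2), (8), (9) give intersection $1$ or $\langle c\rangle$. Note also that Type II subgroups of $\Gamma_k$ need not have rank two (the Type I subgroups of $\Gamma_k'$ have infinite image under $p$, hence are Type II in $\Gamma_k$), so your case analysis in (8), which treats only rank-two Type II subgroups, is incomplete.

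Second, your argument for (4) is false: conjugation by $\beta\alpha$ sends $c\mapsto c^{\pm1}$ (since $c=\beta^2$ commutes with $\beta$ and $\alpha c\alpha^{-1}=c^{-1}$), so the iterates applied to $c$ all span the \emph{same} line; every conjugate $K_\beta^{(\beta\alpha)^n}$ has center exactly $\langle c\rangle$, as your own restatement of item (3) records. Hence ``pairwise distinct rational slopes'' fails for the $\beta$-side, and your deduction of (6) from (4) breaks exactly there: two Type I subgroups lying in distinct conjugates of $K_\beta$ each contain $\langle c\rangle$ with index $2$ by \Cref{lemma:mabco:klein:bottle}(4), so their intersection is $\langle c\rangle$, not trivial. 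The paper's proof of (4) is careful to conclude triviality only \emph{provided the two centers are distinct}, and the genuinely new feature of the pgg case — visible in \Cref{fig:pgg} and in chain type (5) of \Cref{Cor:chains:pgg}, namely $1<\langle c\rangle<D$ with $D$ of Type I — is that all Type I subgroups coming from conjugates of $K_\beta$ sit over the common subgroup $\langle c\rangle$. Your proposal erases this feature, and the two-stage collapsing argument in \Cref{thm:case:pgg} depends on getting these chains right, so this is a genuine gap rather than a presentational difference.
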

\begin{proof}
\begin{enumerate}
    \item Since $\Gamma_k$ is a torsion free virtually poly-$\dbZ$ group of rank 3, any abelian subgroup is free of rank at most 3. On the other hand $\Gamma_k$ cannot have a $\dbZ^3$ subgroup as this posibility only happens, in the context of 3-manifolds, in the presence of an euclidean metric.
    \item Since every Type II subgroup of $\Gamma_k$ is actually a subgroup of $\Gamma_k'$, this claim is a direct consequence of \Cref{lem:structure:pg} (2) and \Cref{lem:ab:subgps:pg}. See also \Cref{fig:pg}.

\item By \Cref{lemma:conjugate:factors} (1) all conjugates of $K_\alpha$ (resp. $K_{\beta}$) are of the form $K_\alpha^{(\alpha\beta)^n}$ (resp. $K_{\beta}^{(\alpha\beta)^n}$) for some $n\in \dbZ$. It is easy to check that $(\alpha^2)^{\alpha\beta} = a^{\alpha\beta}=a^{-1}c^k$ and $(\beta^2)^{\alpha\beta} =c^{\alpha\beta}=c^{-1}$ directly from the presentation. The claim now follows from \cref{lemma:conjugate:factors}(2).

\item By the previous item, the centers $Z(K_i^{(\alpha\beta)^m})$ and $Z(K_j^{(\alpha\beta)^n})$ are of the form $\langle ac^{mk} \rangle$ and $\langle c \rangle$. Thus $Z(K_i^{{(\alpha\beta)}^m})\cap Z(K_j^{{(\alpha\beta)}^n})=1$ provided $Z(K_{i}^{{(\alpha\beta)}^m})\neq  Z(K_{i}^{{(\alpha\beta)}^n})$ (we can think these groups as lines in the lattice plane $\dbZ^2 \cong \langle c, a \rangle$ with diferent slopes). 
\item This is (3) in \Cref{lemma:mabco:klein:bottle}.
\item Let $B$ y $D$ be Type I subgroups that lie in different conjugates $K_i^{(\alpha\beta)^n}$ and $K_j^{(\alpha\beta)^m}$, respectively. Then by \Cref{lemma:mabco:klein:bottle}  (4), $Z(K_i^{(\alpha\beta)^n})$ and $Z(K_j^{(\alpha\beta)^m})$ are index 2 subgroups of $C$ and $D$, respectively. By part (4), the centers have trivial intersection, and thus so do $C$ and $D$.

\item This is (4) in \Cref{lemma:mabco:klein:bottle}.

\item This claim can be read from \Cref{fig:pgg}.

\item Let $B$ be a Type I subgroup and $C$ a Type II subgroup, this means that $p(B)\cong\dbZ/2$ and $p(C)\cong \dbZ$. Then $p(B\cap C)\subset p(B)\cap p(C)=1$, thus $B\cap C\subset A=\langle c,a\rangle$. Therefore $B\cap C=B\cap C\cap A=(B\cap A)\cap(C\cap A)$, and the latter intersection is either trivial or $\langle c \rangle$ by the two previous items.
\end{enumerate}
\end{proof}

\begin{figure}[ht]
    \centering
    \includegraphics[scale=.8]{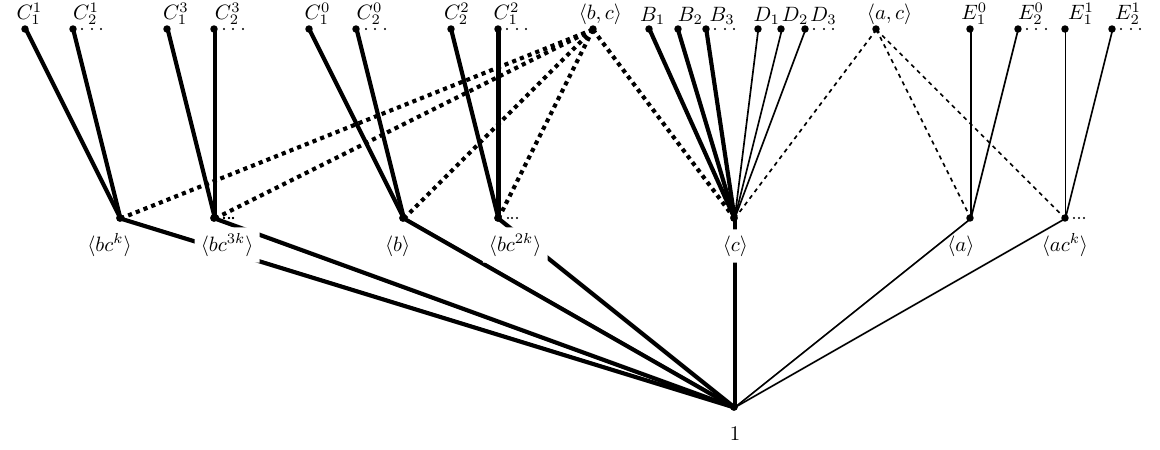}
    \caption{\small The Hasse diagram of  $\mAb{\Gamma_k}$. The subdiagram in bold lines correspond to the Hasse diagram of $\mAb{\Gamma_k'}$ and therefore the groups on the top level are of Type II; note that since $\mAb{\Gamma_k'}$ belongs to the Case pg the subdiagram is identical to \Cref{fig:pg}. Subgroups of the form $D_j$ and $E^i_j$ are of Type I.}
    \label{fig:pgg}
\end{figure}

\begin{corollary}\label{Cor:chains:pgg} Following \Cref{notation:sol} and \Cref{lem:ab:subgps:pg}, every maximal chain in $\mAb{G}$ is of one of the following types:
    \begin{enumerate}
        \item $1 < \langle c \rangle< B$ where $B$ is a subgroup of Type II within $\Gamma_k'$,
        \item $1 < \langle c \rangle< \langle c,b \rangle$,
        \item $1 < \langle c^{mk}b \rangle< C$  where $m\in \dbZ$  and $C$ is an infinite cyclic group that contains $\langle c^{mk} b\rangle$ as a subgroup of index 2,
        \item $1 < \langle c^{mk}b \rangle< \langle c,b \rangle$  where $m\in \dbZ$,

         \item $1 < \langle c \rangle< D$ where $D$ is a subgroup of Type I,
        \item $1 < \langle c \rangle< A$ where $A=\langle a,c \rangle$,
        \item $1 < \langle c^{mk}a \rangle< E$  where $m\in \dbZ$  and $E$ is an infinite cyclic group that contains $\langle c^{mk}a \rangle$ as a subgroup of index 2,
        \item $1 < \langle c^{mk}a \rangle<\langle a,c\rangle$  where $m\in \dbZ$,
    \end{enumerate}
\end{corollary}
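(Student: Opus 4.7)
The plan is to derive \Cref{Cor:chains:pgg} as a direct bookkeeping consequence of \Cref{lem:ab:subgps:pgg}. I would first observe that $\mAb{\Gamma_k}$ has height exactly $2$: by part~(1) every abelian subgroup has rank at most $2$, and by parts~(4)--(9) every nontrivial intersection of two distinct maximal abelian subgroups has rank $1$. Consequently every maximal chain has length $2$ and takes the form $1 < X < M$, where $M$ is a maximal abelian subgroup and $X = M\cap M'$ for some other maximal abelian $M'$.

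The body of the proof would then be a case analysis on the type of $M$, using the trichotomy $\{A,\text{ Type~I},\text{ Type~II}\}$ from \Cref{notation:sol}. When $M=A$, parts~(7) and (8) restrict $X$ to $\{\langle c\rangle\}\cup\{\langle c^{mk}a\rangle : m\in\dbZ\}$, producing chains~(6) and~(8). When $M$ is Type~I and sits in a conjugate $K_i^{(\beta\alpha)^n}$, parts~(5)--(9) show that the only nontrivial intersection with another maximal abelian is the center $Z(K_i^{(\beta\alpha)^n})$, which by part~(3) equals $\langle c\rangle$ or $\langle c^{-nk}a\rangle$; these produce chains~(5) and~(7).

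For $M$ of Type~II, I would invoke \Cref{lem:structure:pg} to view $\Gamma_k'$ itself as a Case~pg group in which $\langle b,c\rangle$ plays the role of the pg-amalgamation subgroup. Reading intersections off the bold subdiagram of \Cref{fig:pgg} (equivalently, applying \Cref{Cor:chains:pg} with the generators $a$ and $b$ interchanged), and noting via parts~(8)--(9) that intersections with maximal abelians of $\Gamma_k$ lying outside $\Gamma_k'$ contribute at most $\langle c\rangle$, one obtains three subcases: (a) if $M = \langle b,c\rangle$ then $X\in\{\langle c\rangle,\langle c^{mk}b\rangle\}$, giving chains~(2) and~(4); (b) if $M$ is a rank-$2$ pg-Type~II subgroup of $\Gamma_k'$ then $X = \langle c\rangle$, giving chain~(1); (c) if $M$ is a rank-$1$ pg-Type~I cyclic subgroup of $\Gamma_k'$ then $X = \langle c^{mk}b\rangle$, giving chain~(3). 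In subcase~(c), I would check that $c\notin M$, which is clear since otherwise $M\supseteq\langle b,c\rangle$ would contradict $\rank(M)=1$, so $\langle c\rangle$ does not appear as a middle element.

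The only real obstacle is verifying in subcases~(b) and~(c) that maximal abelians of the pg group $\Gamma_k'$ remain maximal in $\Gamma_k$; the hard part will be ruling out rank-$2$ enlargements that escape from $\Gamma_k'$. This follows from part~(1) of \Cref{lem:ab:subgps:pgg} together with $[\Gamma_k:\Gamma_k']=2$: any strict enlargement $M\subsetneq M^+$ with $\rank(M^+)=2$ would intersect $\Gamma_k'$ in a rank-$2$ abelian subgroup strictly containing $M$, contradicting the maximality of $M$ inside $\Gamma_k'$.
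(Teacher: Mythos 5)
Your overall strategy is the right one: the paper offers no written proof for this corollary, treating it as immediate bookkeeping from \Cref{lem:ab:subgps:pgg} and \Cref{fig:pgg}, and your case analysis on $M\in\{A,\ \text{Type I},\ \text{Type II}\}$, with the intersections read off parts (3)--(9) and the bold pg subdiagram, is exactly that bookkeeping made explicit. The cases $M=A$ and $M$ of Type I are fine. The genuine problem is your final paragraph --- the very step you flag as ``the only real obstacle''. Your index-$2$ argument does not rule out an enlargement $M \subsetneq M^{+}$ with $[M^{+}:M]=2$ obtained by adjoining a single element $t \notin \Gamma_k'$ with $t^{2}\in M$: in that situation $M^{+}\cap \Gamma_k' = M$ \emph{exactly}, so you never produce an abelian subgroup of $\Gamma_k'$ strictly containing $M$, and the promised contradiction with maximality of $M$ inside $\Gamma_k'$ does not materialize. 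Pure rank/index arithmetic cannot exclude such a $t$. This hole affects subcase (b) (the rank-$2$ pg-Type II subgroups) and equally $\langle c,b\rangle$ in subcase (a), which your maximality discussion omits; and in subcase (c), where $M$ has rank $1$, your argument handles rank-$2$ enlargements but is silent on rank-$1$ enlargements $M^{+}=\langle M,t\rangle$ with $t\notin\Gamma_k'$, which are subject to the same objection.

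The repair is short and already in the paper. Every subgroup at issue has infinite image under $p\colon \Gamma_k \to D_\infty$ (e.g.\ the pg-Type I cyclics of $\Gamma_k'$ contain $\langle c^{mk}b\rangle$ and $p(b)$ has infinite order). Any abelian overgroup $M^{+}$ of such an $M$ has infinite \emph{abelian} image in $D_\infty$, which therefore lies in the index-$2$ infinite cyclic subgroup (any element of $D_\infty$ outside it is a reflection and centralizes no nontrivial translation), whence $M^{+}\subseteq p^{-1}(\dbZ)=\Gamma_k'$ and $M^{+}=M$ by maximality in $\Gamma_k'$. This is precisely \Cref{lemma:conjugate:factors}(3), which you may cite verbatim; alternatively, part (2) of \Cref{lem:ab:subgps:pgg} already \emph{asserts} that the Type II subgroups of $\Gamma_k$ together with their intersections form the pg picture of \Cref{lem:ab:subgps:pg} with $a$ and $b$ interchanged, so no maximality transfer needs re-proving at all. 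One further small point: from ``every nontrivial pairwise intersection has rank $1$'' it does not yet follow that \emph{every} maximal chain has length $2$ --- you also need each maximal abelian subgroup to properly contain some nontrivial intersection (compare the $\sol$ case, where Type II subgroups give maximal chains $1<L$ of length $1$); your subsequent case analysis does exhibit such an intersection below each $M$, so this is a matter of ordering the argument rather than a gap.
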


\begin{theorem}\label{thm:case:pgg} Let $k$ be a positive integer, then $\Ecom{E_k}\simeq \bigvee_{\mathbb{N}} S^1$.
\end{theorem}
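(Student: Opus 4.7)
The plan is to mimic the iterative free-face collapsing strategy used in the proofs of Theorems \ref{thm:Ecom:G6} and \ref{thm:Ecom:nil:pg}. Applying Corollary \ref{Cor:chains:pgg} and multiplying by $g\in\Gamma_k$, every maximal chain in $\mAbCo{\Gamma_k}$ is a $2$-simplex of the form $\{g\} < gH < gM$, with $H$ one of $\langle c \rangle$, $\langle c^{mk}a\rangle$, $\langle c^{mk}b\rangle$, and $M$ one of $A$, $\langle c,b\rangle$, or a rank-$1$ maximal abelian of Type I ($D$ or $E$) or Type II ($B$ or $C$). The approach is to identify free faces among these $2$-simplices, using Lemma \ref{lem:ab:subgps:pgg}, and collapse them in stages until only a $1$-dimensional complex is left.

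First I would observe that for a fixed $g$ and a fixed coset $gM$ with $M$ a rank-$1$ maximal abelian, the only subgroups of $M$ that appear in the poset $\mAb{\Gamma_k}$ are the trivial subgroup and the centre of $M$: this is immediate from the Hasse diagram in Figure \ref{fig:pgg}, which shows that each of $B$, $C$, $D$, $E$ has a unique element strictly between $1$ and itself in $\mAb{\Gamma_k}$. Consequently each edge $\{g\} < gM$ for such an $M$ is a free face of a unique triangle, so it and the incident open $2$-simplex may be collapsed. After this first stage, the surviving $2$-simplices are exactly those from chain types (2), (4), (6), (8), all with top vertex $gA$ or $g\langle c,b\rangle$.

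In the complex that remains, the edges $\{g\} < g\langle c^{mk}a\rangle$ and $\{g\} < g\langle c^{mk}b\rangle$ now lie in a unique $2$-simplex each (of chain types (8) and (4), respectively), since chains (3) and (7) were already collapsed. A second stage of collapses removes these edges together with all triangles of types (4) and (8). This leaves only triangles of types (2) and (6). In this final sub-complex, the edges $\{g\} < gA$ and $\{g\} < g\langle c,b\rangle$ are each contained in a single surviving triangle, so a third collapse eliminates the last $2$-simplices. Since every step is a deformation retraction, $\mAbCo{\Gamma_k}$ is homotopy equivalent to a connected $1$-complex, hence to a wedge of circles.

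Finally, to see that the wedge is countably infinite, I would argue that the final graph contains, as a subgraph, the graph produced by applying the pg-case collapsing procedure of Theorem \ref{thm:Ecom:nil:pg} to $\mAbCo{\Gamma_k'}$. By Lemma \ref{lem:structure:pg} the group $\Gamma_k'$ is of Type pg, and Theorem \ref{thm:Ecom:nil:pg} then gives $\Ecom{\Gamma_k'}\simeq\bigvee_{\mathbb{N}} S^1$, so the final graph carries countably many independent loops. The main obstacle I expect is the careful bookkeeping needed to verify that the three stages of collapses are compatible: at each stage the edges being collapsed must still be free faces in the complex inherited from the previous stage, which requires a case-by-case check against the intersections catalogued in Lemma \ref{lem:ab:subgps:pgg} and Figure \ref{fig:pgg}.
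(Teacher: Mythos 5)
Your proposal is correct and follows the same strategy as the paper: classify the maximal chains via \Cref{Cor:chains:pgg}, collapse $2$-simplices along free faces until only a graph remains, and extract infinitely many circles from the copy of $\mAbCo{\Gamma_k'}$ (using \Cref{lem:structure:pg} and \Cref{thm:Ecom:nil:pg}). The one substantive difference is your collapse scheduling, and it is in fact an improvement: the paper does it in two stages, collapsing chains (1), (3), (5), (7) via the edge to the top element and then claiming that for each of (2), (4), (6), (8) the edge from $\{g\}$ to the \emph{middle} element lies in a unique triangle. That claim fails for types (2) and (6), whose middle element is $g\langle c\rangle$ in both cases, so the edge $\{g\} < g\langle c\rangle$ lies in \emph{two} surviving triangles, namely $(\{g\}, g\langle c\rangle, g\langle c,b\rangle)$ and $(\{g\}, g\langle c\rangle, gA)$, and is not a free face at that stage. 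Your three-stage ordering repairs this: after collapsing (4) and (8) via the middle edges $\{g\} < g\langle c^{mk}b\rangle$ and $\{g\} < g\langle c^{mk}a\rangle$, the top edges $\{g\} < g\langle c,b\rangle$ and $\{g\} < gA$ become free (each having lost its other incident triangle, of type (4) resp.\ (8)), and collapsing along them removes the last triangles. Two small inaccuracies to fix in your write-up: the Type II subgroups $B$ are rank $2$, not rank $1$ (they are the $Z_i$'s of the pg structure of $\Gamma_k'$, per \Cref{lem:ab:subgps:pgg}(2) and \Cref{fig:pg}), and for such a $B$ the unique intermediate poset element is $\langle c\rangle$ rather than ``the centre of $B$'' (an abelian group is its own centre); neither affects the argument, since the only property you use --- a unique element of $\mAb{\Gamma_k}$ strictly between $1$ and each of $B$, $C$, $D$, $E$ --- does hold, by the intersection statements of \Cref{lem:ab:subgps:pgg}.
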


\begin{proof} This proof is very similar to that of \Cref{Cor:chains:pg}. Any maximal chain in $\mAbCo{\Gamma_k}$ is obtained by multiplying one of the chains listed in \Cref{Cor:chains:pgg} by a group element $g$.
For a given choice of $g$, the edge going from $\{g\}$ to the maximal element of a chain in cases (1), (3), (5) or (7) is contained only in that triangle. Thus we can remove those edges and triangles without the changing the homotopy type. After removing those, in each maximal chain of type (2), (4), (6) or (8), the edge from $\{g\}$ to the middle element is contained in a unique triangle, so we can now removes those edges and triangles. After this, no triangles remain, showing $\mAbCo{\Gamma_k}$ has the homotopy type of a graph, and thus of a wedge of circles (since it is connected).
The complex obtained after removing the triangles still contains a copy of $\mAbCo{\Gamma_k'}$, which has the homotopy type of a countable wedge of circles, and therefore so does $\mAbCo{\Gamma_k}$.
\end{proof}

\section{3-manifolds modeled on $\sol$}

By \cite[Theorem~1.8.2]{AFW15}, the fundamental groups of manifolds modeled on $\sol$ can be split into two categories that we described below.

\begin{description}
\item[Semi direct products] Groups of the form $\dbZ^2\rtimes_\varphi \dbZ$ where $\varphi$ is an automorphism of $\dbZ^2$ that does not fix any subgroup of rank 1.
\item[Amalgams] Groups of the form $K\ast_A K$ where where $K$ is the fundamental group of the Klein bottle,  $A=\dbZ \times 2\dbZ\cong \dbZ^2$, and the amalgamation is given by an automorphism $\varphi\colon A\to A$  that does not fix any subgroup of rank 1.
\end{description}

\begin{theorem}\label{prop:sol:semidirect}
Let $G=\dbZ^2\rtimes_\varphi \dbZ$ where $\varphi$ is an automorphism of $\dbZ^2$ that does not fix any subgroup of rank 1 . Then 
\begin{enumerate}
    \item every abelian subgroup of $G$ is free of rank at most 2,
    \item the subgroup $\dbZ^2\rtimes \{0\}$ is the unique maximal abelian subgroup of rank 2,
    \item the intersection of any two maximal abelian subgroups of $G$ is trivial, and
    \item $\Ecom{G}\simeq \bigvee_{\mathbb{N}} S^1$.
\end{enumerate}
\end{theorem}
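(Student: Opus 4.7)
The plan is to first isolate the key algebraic property of $\varphi$ coming from the $\sol$ hypothesis, and then push through the four claims in order.

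\emph{Setup.} I would first argue that the $\sol$ assumption forces $\varphi$ to be Anosov, i.e., to have two distinct real irrational eigenvalues. The rank-$1$ hypothesis rules out rational eigenvalues, while a finite-order $\varphi$ with no invariant rank-$1$ subgroup would have order $3$, $4$, or $6$ (by the crystallographic restriction), putting us in one of the Euclidean cases $G_3, G_4, G_6$ rather than $\sol$. By Kronecker's theorem on algebraic integers on the unit circle, an infinite-order such $\varphi$ must then be Anosov. The crucial consequence is that for every nonzero integer $m$, the operator $\varphi^m - I$ is invertible over $\dbQ$; equivalently, the only $w\in\dbZ^2$ with $\varphi^m(w) = w$ is $w = 0$. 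This is the engine of everything below.

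\emph{Claims (1) and (2).} I would analyze an abelian subgroup $H\le G$ via the standard projection $\pi\colon G\twoheadrightarrow\dbZ$. If $\pi(H) = 0$ then $H\le\dbZ^2\rtimes\{0\}$, which has rank at most $2$. Otherwise, pick $(v, m)\in H$ with $m\ne 0$; for any $(w, 0)\in H\cap\dbZ^2$ the commutation relation forces $\varphi^m(w) = w$, hence $w = 0$. So $H\cap\dbZ^2 = 0$ and $H\cong\pi(H)$ has rank $1$. This proves (1) and shows any rank-$2$ abelian subgroup is contained in $\dbZ^2\rtimes\{0\}$. That $\dbZ^2\rtimes\{0\}$ is itself maximal abelian is immediate, since an element $(v,m)$ outside it that centralized all of $\dbZ^2$ would force $\varphi^m = I$, hence $m = 0$. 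This gives (2).

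\emph{Claim (3).} I would classify maximal abelian subgroups via centralizers. For $g = (v,0)$ with $v\ne 0$, an element $(w, k)$ commutes with $g$ iff $\varphi^k(v) = v$, which by the key fact forces $k = 0$, so $C_G(g) = \dbZ^2\rtimes\{0\}$. For $g = (v, m)$ with $m\ne 0$, the centralizer is cut out by the linear equation $(I-\varphi^m)(w) = (I-\varphi^k)(v)$; diagonalizing $\varphi$ over $\dbR$ one checks directly that any two solutions commute, so $C_G(g)$ is abelian of rank $1$. Thus every maximal abelian subgroup of $G$ is either $\dbZ^2\rtimes\{0\}$ or a rank-$1$ group $C_G(g)$ for some $g\notin\dbZ^2$. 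To prove triviality of intersections, let $A\ne B$ be distinct maximal abelians sharing a non-identity element $k$. If $k\in\dbZ^2$ then $A, B\le C_G(k) = \dbZ^2\rtimes\{0\}$, forcing $A = B = \dbZ^2\rtimes\{0\}$; if $k\notin\dbZ^2$ then $A, B\le C_G(k)$, an abelian subgroup, forcing $A = B = C_G(k)$. Either case contradicts distinctness, so $A\cap B = 1$.

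Claim (4) is then immediate from (3) and \Cref{thm:countable:wedge}, since $G$ is countably infinite and nonabelian. The main obstacle in this strategy is verifying that $C_G((v,m))$ is genuinely abelian when $m\ne 0$; this is the only non-formal step, and it reduces via the $\dbR$-diagonalization of $\varphi$ to a routine identity in the eigenvalue coordinates.
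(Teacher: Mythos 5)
Your proof is correct, and at the heart of the matter --- part (3) --- it takes a genuinely different route from the paper's. The paper proves (1) by quoting poly-$\dbZ$ structure theory (\Cref{prop:polyZ}) together with the geometric fact that a $\dbZ^3$ subgroup would force $\mbe^3$-geometry, proves (2) by citing \cite[Lemma~4.8]{LASS21}, and proves (3) by a commensurator argument: for a rank-one maximal abelian $C$ it shows $\mathrm{comm}_G C = C$ (if $\mathrm{comm}_G C$ had rank $\geq 2$ it would have finite index in $G$, hence again be a $\sol$-type semidirect product with no nontrivial normal cyclic subgroup, contradicting that the commensurator normalizes a finite-index subgroup of $C$ by \cite[Lemma~5.15]{LW12}), and then observes that two rank-one maximal abelians with nontrivial intersection are commensurable, hence have the same commensurator, hence coincide. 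You instead compute centralizers outright from the single linear-algebra fact that $\varphi^m - I$ is invertible for $m \neq 0$: this gives $C_G((v,0)) = \dbZ^2 \rtimes \{0\}$ for $v \neq 0$ and shows $C_G((v,m))$ is itself abelian for $m \neq 0$, after which (3) is formal. Your one ``non-formal'' step (abelianness of $C_G((v,m))$) does go through as you say, and in fact needs no $\dbR$-diagonalization: from $(I-\varphi^m)w = (I-\varphi^k)v$ one gets $w = (I-\varphi^m)^{-1}(I-\varphi^k)v$ over $\dbQ$, and since polynomials in $\varphi$ commute, any two solutions commute. Your approach buys self-containedness (it also yields (1) and (2) with no citations, and no appeal to the classification of $3$-manifold groups containing $\dbZ^3$), while the paper's commensurator argument is shorter given the imported lemmas and reflects a more robust general principle. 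Finally, your setup paragraph is a needed clarification rather than a detour: as literally stated, the hypothesis ``$\varphi$ fixes no rank-$1$ subgroup'' admits finite-order $\varphi$ of order $3$, $4$ or $6$, for which part (3) is false --- these give the Euclidean groups $G_3$, $G_4$, $G_6$ of \Cref{thm:Ecom:G1-G5}, whose maximal abelian subgroups all contain the nontrivial center $\{0\} \rtimes n\dbZ$ --- so the Anosov property you extract from the $\sol$ context (via the crystallographic restriction and Kronecker's theorem) is exactly the hypothesis under which the theorem is true, and it is what the paper's own proof implicitly uses when it invokes \cite[Lemma~4.8]{LASS21}.
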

\begin{proof}
First we prove (1). Since $G$ is a finitely generated and torsion-free poly-$\dbZ$ group of rank 3, every abelian subgroup of $G$ is of the form $\dbZ^n$ with $n\leq \mathrm{rank}(G)=3$. Notice that $G$ cannot have a subgroup isomorphic to $\dbZ^3$, because it would have finite index by \cref{prop:polyZ}, but the only 3-manifold groups with this property are the ones modeled on $\mathbb{E}^3$.

Item (2) follows from \cite[Lemma~4.8]{LASS21}. Since item (4) follows directly from item (3) and \Cref{thm:countable:wedge}, the rest of the proof deals with the proof of item (3).

Denote $H=\dbZ^2\rtimes \{0\}$. Let $C$ be a maximal abelian subgroup of $G$ different from $H$. We claim that $C\cap H$ is trivial. Let $p\colon G \to \dbZ$ be the projection onto the second factor of $G$. If $C\cap H$ is not trivial, then it must be a infinite cyclic subgroup of the kernel of $p$, and therefore $p(C\cap H)$ has rank zero, that is, $p(C\cap H)$ is finite, and thus it is trivial. Hence $C\cap H$ is contained in $H$ which contradicts the maximality of $C$, this proves the claim.

Let us pause our proof to show that $\mathrm{comm}_GC=C$, where the commensurator is given by $\mathrm{comm}_GC=\{g\in G | gCg^{-1}\cap C\neq 1\}$. As $\mathrm{comm}_GC$ is a subgroup of $G$, it is poly-$\dbZ$ by \Cref{prop:polyZ}. Assume  $\mathrm{comm}_GC$ contains a subgroup $L$ of rank 2, then $L\cong \dbZ\rtimes \dbZ$ and therefore it contains a subgroup isomorphic to $\dbZ^2$, which is contained in $H$ for being the unique maximal $\dbZ^2$ subgroup of $G$. Since $\mathrm{comm}_GC$ contains both $L$ and $C$, and $C\cap L$ is trivial, we have that $\mathrm{comm}_GC$ is of rank 3, and therefore has finite index in $G$ by \Cref{prop:polyZ}. This implies that $\mathrm{comm}_GC$ also is of the form $\dbZ^2\rtimes_\psi \dbZ$ where $\psi$ is an automorphism of $\dbZ^2$ that does not fix any subgroup of rank 1, in particular  this group does not have any nontrivial normal cyclic subgroups by \cite[Lemma~4.8(b)]{LASS21}. On the other hand, $\mathrm{comm}_GC$ normalizes a finite index subgroup of $C$ (see for instance \cite[Lemma~5.15]{LW12}), this leads to a contradiction. Thus we conclude $\mathrm{comm}_GC$ is of rank one, that is $\mathrm{comm}_GC$ is infinite cyclic. Since $C\subseteq \mathrm{comm}_GC$, by maximality of $C$ we have  $C=\mathrm{comm}_GC$.

Let $C$ and $D$ be two maximal abelian subgroups of $G$ isomorphic to $\dbZ$, such that $C\cap D$ is nontrivial. Thus $C$ and $D$ are commensurable, and therefore $\mathrm{comm_G}C=\mathrm{comm_G}D$. By the previous paragraph, we conclude $C=D$. This finishes the proof.
\end{proof}

In the following lemma we describe the maximal abelian subgroups of $K\ast_A K$ and their intersections, all the conclusions in this lemma are summarized in \Cref{fig:sol}.

\begin{lemma}\label{lem:ab:subgps:solamalgam}
Following \Cref{notation:sol} we have that:
\begin{enumerate}
    \item every abelian subgroup of $G$ is free of rank at most 2,
    \item the subgroup $A$ is the only rank 2 maximal abelian subgroup of $G$, 
    \item the intersection of any two subgroups of Type II is trivial, and the intersection of any subgroup of Type II with $A$ is trivial,
    \item The intersection of any subgroup of Type I with any subgroup of Type II is trivial,

    \item the intersection of any two subgroups of Type I that lie in a single conjugate $K_i^x$ is $Z(K_i^x)$,
    \item the intersection of $A$ with a subgroup of Type I that is subconjugate to $K_i^x$ is $Z(K_i^x)$,
    \item if $K_i^x\neq K_j^y$, then  $Z(K_i^x)\cap Z(K_j^y)$ is trivial.
\end{enumerate}
\end{lemma}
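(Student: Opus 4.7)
The plan is to adapt the strategy used in \Cref{lem:ab:subgps:G6} and \Cref{lem:ab:subgps:pg} for the analogous amalgams in the $\mbe^3$ and $\nil$ settings. I work in the setup of \Cref{notation:sol}: there is a short exact sequence $1 \to A \to G \to D_\infty \to 1$, the index-$2$ subgroup $G'$ is isomorphic to $A \rtimes_{\varphi'} \dbZ$ for some automorphism $\varphi'$ of $A$, and in the Sol case $\varphi'$ is hyperbolic, so $G'$ satisfies the hypothesis of \Cref{prop:sol:semidirect}.

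For (1), $G$ is a torsion-free poly-$\dbZ$ group of rank $3$, so every abelian subgroup is free of rank at most $3$ by \Cref{prop:polyZ}; a rank-$3$ abelian subgroup would be $\dbZ^3$ of finite index, making $G$ virtually $\dbZ^3$ and contradicting the Sol hypothesis. For (2), let $B$ be a rank-$2$ maximal abelian subgroup. If $p(B)$ is infinite then $B \subseteq G'$ and \Cref{prop:sol:semidirect}(2) forces $B = A$; if $p(B)$ is finite then $B$ is contained either in $A$ or in a conjugate $K_i^z$, and inside a Klein bottle factor the only rank-$2$ abelian subgroup is the amalgamating $A$ (which is normal in $G$, so $A^z = A$).

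Parts (3)--(6) are then direct applications of earlier structural results. For (3), any Type II subgroup is automatically maximal abelian in $G'$ (any abelian supergroup inside $G'$ would be abelian in $G$, contradicting maximality there), so \Cref{prop:sol:semidirect}(3) applied to $G'$ gives both conclusions, using that $A$ is the unique rank-$2$ maximal abelian subgroup of $G'$. For (5) and (6), any Type I subgroup is a rank-$1$ maximal abelian subgroup of the Klein bottle $K_i^z$ not contained in its rank-$2$ piece $A$, so \Cref{lemma:mabco:klein:bottle}(4) and (5) apply. For (4), the image $p(B)$ of a Type I subgroup is a finite subgroup of $D_\infty$ while the image $p(C)$ of a Type II subgroup is infinite cyclic inside the torsion-free translation subgroup of $D_\infty$, so $p(B) \cap p(C) = 1$; hence $B \cap C \subseteq A$ and (3) yields $B \cap C \subseteq C \cap A = 1$.

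The main difficulty is (7), which I plan to handle via the conjugation action of $D_\infty = G/A$ on $A$. Suppose $g \in Z(K_i^x) \cap Z(K_j^y)$ is nontrivial; by \Cref{lemma:conjugate:factors}(2), both centers lie in $A$ (which is normal in $G$), so $g \in A$. Let $S_g \leq D_\infty$ denote the stabilizer of $g$ under the induced action $D_\infty \to \Aut(A)$; since $g \in A$, we have $C_G(g) = p^{-1}(S_g)$, and the hypothesis $K_i^x, K_j^y \subseteq C_G(g)$ gives $p(K_i^x), p(K_j^y) \subseteq S_g$. A direct computation in $D_\infty = \langle \bar x, \bar y \rangle$ shows that with $s = \bar x \bar y$ the conjugates of $\langle \bar x \rangle$ are exactly the subgroups $\langle s^{2k} \bar x \rangle$ while the conjugates of $\langle \bar y \rangle$ are the subgroups $\langle s^{2k+1} \bar x \rangle$, and in each family distinct conjugates give distinct reflections. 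Since $K_i^x \neq K_j^y$, the reflections generating $p(K_i^x)$ and $p(K_j^y)$ are distinct, so $S_g$ contains two distinct reflections whose product is a nontrivial translation $s^\ell$. This translation acts on $A$ as $\varphi'^\ell$, so $\varphi'^\ell(g) = g$ for some $\ell \neq 0$, contradicting the hyperbolicity of $\varphi'$.
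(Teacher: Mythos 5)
Your proof of items (1)--(6) is correct and essentially identical to the paper's: the same appeal to \Cref{prop:polyZ} plus the absence of $\dbZ^3$ subgroups for (1), to \Cref{prop:sol:semidirect}(2)--(3) for (2)--(3), to \Cref{lemma:mabco:klein:bottle}(4)--(5) for (5)--(6), and the same image-under-$p$ argument for (4). For item (7), however, you take a genuinely different route. The paper reduces to showing $Z(K_i)\cap Z(K_j^x)=1$ and splits into two cases: for $i=j$ it uses that conjugation by elements of $G'$ acts on $A$ by powers of $\varphi'$, supplemented by a squaring trick for $x\notin G'$; for $i\neq j$ it invokes geometry --- the subgroup $H=\langle K_1,K_2^x\rangle$ has finite index in $G$, so $\widetilde{M}/H$ is again a $\sol$-manifold, whence $H\cong K_1\ast_A K_2$ has trivial center, and $Z(K_1)\cap Z(K_2^x)\subseteq Z(H)$. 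Your argument is instead uniform and purely algebraic: a nontrivial $g\in Z(K_i^x)\cap Z(K_j^y)$ lies in $A$ by \Cref{lemma:conjugate:factors}(2), the conjugation action on $A$ factors through $D_\infty$ since $A$ is abelian and normal, the stabilizer $S_g$ then contains the two distinct reflections generating $p(K_i^x)$ and $p(K_j^y)$ (distinct because conjugates of the factors biject with reflection subgroups of $D_\infty$, with the parity bookkeeping you verify), hence $S_g$ contains a nontrivial translation $s^\ell$, and $\varphi'^{\ell}(g)=g$ with $\ell\neq 0$ is impossible. This buys you two things: it avoids both the case division and the geometric input (the finite-index cover argument), and it handles cleanly the subcase $x\notin G'$, where the paper's squaring trick is delicate (there $x^2\in A$, so $K_i^{x^2}=K_i$ and the ``previous argument'' does not literally apply; one must instead normalize the conjugator into $G'$ using \Cref{lemma:conjugate:factors}(1)). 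One small point you should make explicit: your final contradiction needs that no nonzero power of $\varphi'$ fixes a nonzero element of $A$, which is formally stronger than the stated hypothesis that $\varphi'$ fixes no rank-one subgroup (a finite-order rotation also fixes no rank-one subgroup). It follows because $\Fix(\varphi'^{\ell})$ is $\varphi'$-invariant, so a nontrivial fixed element would either yield a $\varphi'$-invariant rank-one subgroup or force $\varphi'$ to have finite order and $G'$ to contain $\dbZ^3$, both excluded in the $\sol$ setting; the paper glosses the same point when it applies the no-fixed-line hypothesis to $\varphi'^{(n)}$, so this is a matter of polish rather than a gap.
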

\begin{proof}
\begin{enumerate}
    \item Since $G$ is a torsion free poly-$\dbZ$ group of rank 3, any abelian subgroup is free of rank at most 3. On the other hand $G$ cannot have a $\dbZ^3$ subgroup as this posibility only happens, in the context of 3-manifolds, in the presence of an euclidean metric.

    \item Let $H$ be a rank 2 abelian subgroup of $G$. Note that $p(H)$ cannot be an infinite subgroup of $\dbZ\rtimes_{-1} \dbZ/2$. Indeed, if $p(H)$ were infinite it would mean that a generator of $p(H)$ stabilizes the rank one subgroup $H\cap A$, which is impossible. Then $H'=H\cap G'$ is a rank 2 abelian subgroup subconjugate to one of the factors of $G$, and we conclude from \Cref{lemma:mabco:klein:bottle} that $H$ is a subgroup of $A$.
 
    \item It follows directly from \Cref{prop:sol:semidirect}(3).

    \item Let $H_1$ and $H_2$ be subgroups of Type I and II respectively. Then $p(H_1)$ is finite and $p(H_2)$ is infinite cyclic, so $p(H_1) \cap p(H_2) = 1$. Therefore $H_1 \cap H_2 \subseteq \ker p = A$. But by the previous part, $H_2 \cap A = 1$, and hence $H_1 \cap H_2 = 1$ too.
    
    \item It follows directly from \Cref{lemma:mabco:klein:bottle}(4).

    \item It follows directly from \Cref{lemma:mabco:klein:bottle}(5).

    \item  Note that $Z(K_i^x)\cap Z(K_j^y)=1$ is equivalent to $Z(K_i)\cap Z(K_j^{x^{-1}y})=1$. Then, let us prove $Z(K_i)\cap Z(K_j^{x})=1$, provided $K_i\neq K_j^{x}$. There are two cases: 

    \begin{description}
        \item[Case 1. $i=j$] Let us first handle the case in which $x \in G'$. 
        Since $G'$ is isomorphic to $\dbZ^2\rtimes_{\varphi'}\dbZ$, we have that for  $a\in A$ (in particular for $a\in Z(K_i^x)$), $xax^{-1}=\varphi'^{(n)}(a)$ for some $n\in\dbZ$. Since $K_i \neq K_i^x$, we have $n \neq 0$. Then
        \[Z(K_i)\cap Z(K_i^{x})=Z(K_i)\cap \varphi'^{(n)}(Z(K_i))=1.\]
        where the last equality holds because $\varphi'$ doesn't fix any rank 1 subgroup of $A$.
        Now, we handle the general case. Since $G'$ has index 2, then $x^2\in G'$. By the previous argument, 
        $Z(K_i)\cap Z(K_i^{x^2})=1$. If we had $Z(K_i)\cap Z(K_i^x) \neq 1$, let $a$ be a generator of that intersection.
        We have $xax^{-1} = a^{\pm 1}$, and thus $x^2 a x^{-2} = a$, contradicting that $Z(K_i)\cap Z(K_i^{x^2})=1$.
        \item[Case 2. $i\neq j$]. Note that $H=\langle K_1,K_2^x\rangle$ has finite index in $G$. Hence the manifold $\tilde{M}/H$ is a 3-manifold modeled on $\sol$, where $\tilde{M}$ is the universal cover of the initial 3-manifold. Hence, $H$ is isomorphic to $K_1\ast_A K_2$  by an automorphism $\psi\colon A\to A$ that does not fix any subgroup of rank 1. Since $Z(K_1)\cap Z(K_2^x)\subset Z(H)=1$, the result follows.
    \end{description}
\end{enumerate}
\end{proof}

\begin{figure}[ht]
    \centering
    \includegraphics[scale=.8]{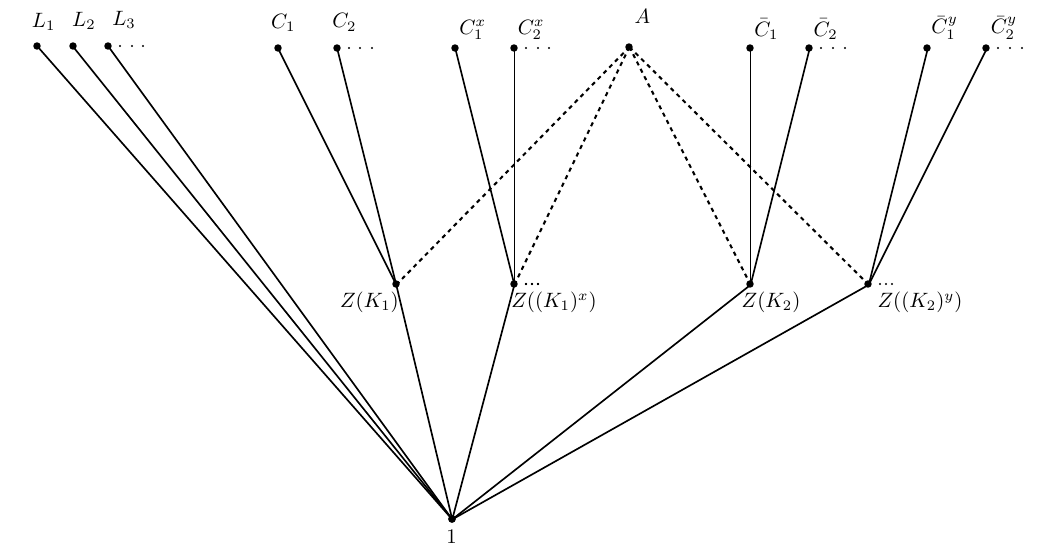}
    \caption{\small Hasse diagram of $\mAb{K_1\ast_A K_2}$. The subgroups of the form $L_i$ are of Type II. The subgroups of the form $C_i$, $C_i^x$, $\bar C_i$ and $\bar C_i^y$ are of Type I.}
    \label{fig:sol}
\end{figure}

The following corollary is an immediate consequence of the preceding lemma.

\begin{corollary}\label{Cor:chains:sol} Following \Cref{notation:sol} and \Cref{lem:ab:subgps:solamalgam}, every maximal chain in $\mAb{G}$ is of one of the following types:
    \begin{enumerate}
        \item $1 < L$ with $L$ of Type II,
        \item $ 1< Z(K_i^x) < A$ for some $x\in G$ and $i\in \{1,2\}$,
        \item $ 1< Z(K_i^x) < C$ for some $x\in G$ and $i\in \{1,2\}$, and $C$ of Type I.
    \end{enumerate}
\end{corollary}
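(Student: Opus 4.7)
The plan is to read off the maximal chains of $\mAb{G}$ directly from the structural information supplied by \Cref{lem:ab:subgps:solamalgam}. First I would enumerate the maximal elements of the poset, then the possible intermediate elements, and finally assemble the chains.

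For the top level of $\mAb{G}$, parts (1) and (2) of the lemma say that every abelian subgroup of $G$ has rank at most two and that $A$ is the unique rank two maximal abelian subgroup. All remaining maximal abelian subgroups are rank one and, by the dichotomy set up in \Cref{notation:sol}, they split into Type I (subconjugate to a factor) and Type II. Thus the maximal elements of $\mAb{G}$ are exactly $A$, the Type I subgroups, and the Type II subgroups.

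Next I would identify the nontrivial proper intersections of pairs of maximal abelian subgroups, since these are the only candidates for nontrivial intermediate elements of $\mAb{G}$. Parts (3) and (4) rule out every intersection involving a Type II subgroup. Parts (5), (6), and (7) show that the remaining nontrivial intersections are precisely the centers $Z(K_i^x)$ of conjugates of the Klein bottle factors, each one arising either as $A \cap C$ for a Type I subgroup $C$ of $K_i^x$, or as the intersection of two distinct Type I subgroups of $K_i^x$. Further intersecting a center $Z(K_i^x)$ with any other maximal abelian subgroup returns either $Z(K_i^x)$ itself or the trivial subgroup, so there is no element of $\mAb{G}$ strictly between $1$ and any $Z(K_i^x)$.

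With the poset pinned down, the maximal chains are immediate. A Type II subgroup $L$ has no nontrivial element of $\mAb{G}$ strictly below it, yielding the chains $1 < L$ of type (1). The subgroup $A$ lies above each $Z(K_i^x)$ (and nothing else nontrivial and proper in $A$ belongs to $\mAb{G}$), yielding the chains $1 < Z(K_i^x) < A$ of type (2). Finally, any Type I subgroup $C$ subconjugate to $K_i^x$ contains $Z(K_i^x)$ as an index two subgroup by \Cref{lemma:mabco:klein:bottle}, so the only nontrivial element of $\mAb{G}$ strictly between $1$ and $C$ is $Z(K_i^x)$, giving chains of type (3). I do not anticipate any real obstacle here, as the corollary is a direct bookkeeping consequence of the preceding lemma.
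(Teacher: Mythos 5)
Your proposal is correct and matches the paper, which simply declares the corollary ``an immediate consequence of the preceding lemma'': you carry out exactly the bookkeeping the paper leaves implicit, enumerating the maximal elements ($A$, Type I, Type II) via parts (1)--(2) of \Cref{lem:ab:subgps:solamalgam}, using parts (3)--(7) to see that the only nontrivial proper intersections are the centers $Z(K_i^x)$, and checking there is nothing strictly between $1$ and a center or below a Type II subgroup. No gaps; the index-two observation from \Cref{lemma:mabco:klein:bottle} for Type I subgroups is a correct and appropriate justification.
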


\begin{theorem}\label{thm:Ecom:Sol:amalgam}
Let $G$ be a group of the form  form $K\ast_A K$ where $A=\dbZ \times 2\dbZ\cong \dbZ^2$ and the amalgamation is given by an automorphism $\varphi\colon A\to A$  that does not fix any subgroup of rank 1. Then, $\Ecom{G}\simeq \bigvee_{\mathbb{N}} S^1$.
\end{theorem}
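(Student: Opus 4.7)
The plan is to follow the two-stage collapsing procedure used in the proofs of \Cref{thm:Ecom:G6}, \Cref{thm:Ecom:nil:pg} and \Cref{thm:case:pgg}, using the structural information already obtained in \Cref{lem:ab:subgps:solamalgam} and \Cref{Cor:chains:sol}. First, translating \Cref{Cor:chains:sol} from subgroups to cosets, every maximal chain in $\mAbCo{G}$ has the form
\begin{enumerate}
    \item $\{g\} < gL$ with $L$ of Type II;
    \item $\{g\} < gZ(K_i^x) < gA$ for some $x \in G$ and $i \in \{1,2\}$; or
    \item $\{g\} < gZ(K_i^x) < gC$ for some $x \in G$, $i\in\{1,2\}$ and $C$ a Type I subgroup containing $Z(K_i^x)$ with index $2$.
\end{enumerate}
In particular, $\mAbCo{G}$ has height at most $2$ and its geometric realization has dimension at most $2$.

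Next I would collapse the triangles of type (3). By parts (4), (5), (6) and (7) of \Cref{lem:ab:subgps:solamalgam}, a Type I coset $gC$ contains exactly one middle-level vertex, namely $gZ(K_i^x)$ where $K_i^x$ is the unique conjugate of a factor containing $C$; and among the vertices of the form $\{h\}$, only those with $h \in gC$ lie below $gC$, i.e.\ only $\{g\}$ and $\{gc\}$ for some chosen $c \in C \setminus Z(K_i^x)$. Hence each edge of the form $\{h\} < gC$ lies in a unique $2$-simplex (the one containing $gZ(K_i^x)$). Removing the open edges $\{h\} < gC$ together with the interiors of these $2$-simplices for every Type I coset is a deformation retraction.

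After this first round of collapses, the remaining $2$-simplices are all of type (2), and every middle-level edge $\{g\} < gZ(K_i^x)$ still appears: it lies in exactly one triangle of type (2), namely the triangle with top vertex $gA$ (because $A$ is the unique rank-$2$ maximal abelian subgroup of $G$ by \Cref{lem:ab:subgps:solamalgam}(2), so there is a unique coset of $A$ containing any given element). So I can perform a second deformation retract, removing each such edge and the interior of its containing triangle. After this second stage no $2$-simplex remains, so $\mAbCo{G}$ is homotopy equivalent to a connected graph and thus to a wedge of circles.

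To see that the wedge is countably infinite, I would observe that the subcomplex spanned by $1$ and all cosets $gL$ of Type II subgroups is untouched by the collapses above, so the resulting graph contains (up to homotopy) a copy of $\mAbCo{G'}$, where $G' \cong \dbZ^2 \rtimes_{\varphi'} \dbZ$ is the index-$2$ subgroup of \Cref{notation:sol}. Since $\varphi'$ inherits the property of not fixing any rank $1$ subgroup of $A$ (as otherwise its square $\varphi^2$ would, which together with part (1) of \Cref{prop:sol:semidirect} applied to $G'$ forces the relevant conclusion), \Cref{prop:sol:semidirect} gives $\Ecom{G'} \simeq \bigvee_{\mathbb{N}} S^1$, and $G$ is countably infinite, so $\Ecom{G} \simeq \bigvee_{\mathbb{N}} S^1$. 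The main obstacle I anticipate is bookkeeping: verifying carefully that each edge I wish to collapse genuinely lies in a unique $2$-simplex, which is where the intersection statements in parts (3)--(7) of \Cref{lem:ab:subgps:solamalgam} and the uniqueness of $A$ as the rank-$2$ maximal abelian subgroup do the real work.
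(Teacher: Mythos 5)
Your proposal is correct and follows essentially the same two-stage collapsing argument as the paper's proof: the same translation of \Cref{Cor:chains:sol} into coset chains, the same order of collapses (first the Type I triangles $\{g\}<gZ(K_i^x)<gC$, then the triangles through $gA$), and the same final appeal to the surviving copy of $\mAbCo{G'}$ together with \Cref{prop:sol:semidirect} to see that the wedge is countably infinite. The only deviation is your parenthetical justification that $\varphi'$ fixes no rank-$1$ subgroup, which is garbled as written ($\varphi$ and $\varphi'$ are different automorphisms of $A$, so the reference to ``its square $\varphi^2$'' does not parse), but this is harmless: the paper likewise simply uses this fact implicitly when it cites \Cref{prop:sol:semidirect}(3) inside \Cref{lem:ab:subgps:solamalgam}(3).
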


\begin{proof}
As a consequence of \Cref{Cor:chains:sol}, all maximal chains in $\mAbCo{G}$ are of one of the following types:
    \begin{enumerate}
        \item $\{g\} < gL$ with $L$ of Type II,
        \item $\{g\} < gZ(K_i^x) < gA$ for some $x\in G$ and $i\in \{1,2\}$,
        \item $\{g\} < gZ(K_i^x) < gC$ for some $x\in G$ and $i\in \{1,2\}$, and $C$ of Type I.
    \end{enumerate}
Note that for a given choice of $g \in G$ and subgroup $C$ of Type I, there is a unique triangle of the form (3) in the above list, so we can remove the edge ${g} < gC$ and the interior of the triangle without changing the homotopy type. After removing those triangles, each edge of the form $\{g\} < g Z(K_i^x)$ is in a unique triangle, which is of the form (2). We can remove those edges and the interior of their containing triangles without changing the homotopy type either.

At this point, we have no triangles remaining, which shows $\mAbCo{G}$ has the homotopy type of a graph, and hence of a wedge of circles (since it is connected). The complex obtained after removing the triangles still contains a copy of $\mAbCo{G'}$ (comprising the edges of the form (1)), which has the homotopy type of a countable wedge of circles, and therefore so does $\mAbCo{G}$.
\end{proof}

\appendix

\section{Computations for the spherical case using GAP}\label{Appendix:code:GAP}

We will exhibit GAP code that can be used to verify that $\mAb{G}$, and therefore also $\mAbCo{G}$, is of height 1.

First, we define a function to compute the maximal sets under inclusion in a given family of sets. A set $s$ in the family is maximal if the list of sets containing $s$ has length one (that is, consists solely of $s$ itself):

\begin{lstlisting}[language=GAP]
maximal:=sets->Filtered(sets,
                s->Length(Filtered(sets,t->IsSubset(t,s)))=1);
\end{lstlisting}

Next we define functions to compute the maximal abelian subgroups of a group $G$, and to compute the family of pairwise intersections of maximal abelian subgroups:

\begin{lstlisting}[language=GAP]
maxAbSub := G->maximal(Filtered(AllSubgroups(G), IsAbelian));

intMaxAbSub := G -> List(Combinations(maxAbSub(G),2),
                     P->Intersection(P[1],P[2]));
\end{lstlisting}

To check whether a group $G$ has $\mAbCo{G}$ of height 1, we simply check if all pairwise intersections of maximal abelian subgroups are equal to the center of $G$:

\begin{lstlisting}[language=GAP]
isHeight1 := G -> ForAll(intMaxAbSub(G), A -> A = Center(G));
\end{lstlisting}

When a group $G$ has $\mAbCo{G}$ of height 1, the geometric realization of $\mAbCo{G}$ is a graph, and thus homotopy equivalent to a wedge of circles. We can compute how many circles by computing the Euler characteristic of the graph:

\begin{lstlisting}[language=GAP]
circles := function(G)
  local maxAb, Z, edges, vertices;
  Assert(0, isHeight1(G));
  maxAb := maxAbSub(G);
  Z := Center(G);
  edges := Index(G,Z)*Length(maxAb);
  vertices := Sum(maxAb,A->Index(G,A)) + Index(G,Z); 
  return edges - vertices + 1;
end;
\end{lstlisting}

\smallskip

Now we can easily compute the homotopy type of $\Ecom{P_{48}}$:

\begin{lstlisting}[language=GAP]
gap> F := FreeGroup("x", "y");; x := F.1;; y := F.2;;
gap> G := F / [x^(-2) * (x*y)^3, x^(-2) * y^4, x^4];; IdGroup(G);
[ 48, 28 ]
gap> isHeight1(G);
true
gap> circles(G);
167
\end{lstlisting}

\smallskip

Similarly, we can compute the homotopy type of $\Ecom{P_{120}}$:

\begin{lstlisting}
gap> G := F / [x^(-2) * (x*y)^3, x^(-2) * y^5, x^4];; IdGroup(G);
[ 120, 5 ]
gap> isHeight1(G);
true
gap> circles(G);
1079
\end{lstlisting}

\smallskip

Finally, we compute the number of circles in  $\Ecom{P'_{8\cdot 3^m}}$. This computation is more involved since we are not talking about a single group but rather about a family of groups, one for each $m \ge 1$. The proof of \Cref{thm:Ecom:p'} outlines the computation leaving only a couple of claims to check with GAP, which we now do.

The first claim is that $x$ and $y$ do not commute in $P'_{8\cdot 3^m}$. To verify this, note that $P'_{24} = P'_{8\cdot 3^1}$ is a quotient of $P'_{8\cdot 3^m}$ for any $m \ge 1$, so it suffices to check $x$ and $y$ do not commute in $P'_{24}$:

\begin{lstlisting}
gap> F := FreeGroup("x", "y", "z");;
gap> x := F.1;; y := F.2;; z:=F.3;;
gap> G := F / [x^2 * y^(-2), x^2 * (x*y)^(-2), z^3,
               (x^z) * y^(-1), (y^z) * (x*y)^(-1)];;
gap> IdGroup(G);
[ 24, 3 ]
gap> IsAbelian(Subgroup(G, [G.1, G.2]))
false
\end{lstlisting}

Having verified that claim, the proof of \Cref{thm:Ecom:p'} show that all $\Ecom{P'_{8\cdot 3^m}}$ are a wedge of the same number of circles, so to verify the number of circles claimed there it suffices to check $\Ecom{P'_{24}}$:

\begin{lstlisting}
gap> isHeight1(G);
true
gap> circles(G);
39
\end{lstlisting}

\AtNextBibliography{\small}
\printbibliography
% \bibliographystyle{alpha} %harvard, unsrt, alpha
% \bibliography{ref}
\end{document}